\numberwithin{equation}{section}\theoremstyle{definition}
 \newtheorem{Theorem}[equation]{Theorem}
 \newtheorem{Prop}[equation]{Proposition}
 \newtheorem{Lemma}[equation]{Lemma}
 \newtheorem{Cor}[equation]{Corollary}
 \newtheorem{Defn}[equation]{Definition}
 \newtheorem{Example}[equation]{Example}
 \newtheorem{Remark}[equation]{Remark}
\newtheorem{Setup}[equation]{Set up}
\def\enumerate{\begingroup\ifnum\@enumdepth>3\@toodeep\else
      \advance\@enumdepth\@ne
      \edef\@enumctr{enum\romannumeral\the\@enumdepth}%
      \topsep\z@\parskip\z@
      \list{\csname label\@enumctr\endcsname}
        {\@nmbrlisttrue\let\@listctr\@enumctr
         \parsep\z@\itemsep\z@\topsep\z@
         \setcounter{\@enumctr}{0}
         \def \fMakelabel##1{\hss\llap{\rm ##1}}
       }\fi}
\def\p{\mathfrak p}
\def\f{\mathfrak f}
\def\C{\mathbb C}
\def\N{\mathbb N}
\def\F{\mathbb F}
\def\cO{\mathcal O}
\def\cH{\mathcal H}
\def\cR{\mathcal R}
\def\cY{\mathcal Y}
\def\cN{\mathcal N}
\def\cC{\mathcal C}
\def\cF{\mathcal F}
\def\cS{\mathcal S}
\def\cM{\mathcal M}
\def\cD{\mathcal D}
\def\Pl{\mathbb{PL}}
\def\J{\makebox[5pt][l]{$^{J}$}}
\def\V{\hat{V}}
\DeclareMathOperator{\Hom}{Hom}
\DeclareMathOperator{\Ind}{Ind}
\DeclareMathOperator{\row}{row}
\DeclareMathOperator{\End}{End}
\DeclareMathOperator{\Pstab}{Pstab}
\DeclareMathOperator{\rank}{rank}
\DeclareMathOperator{\main}{main}
\DeclareMathOperator{\suppl}{suppl}
\DeclareMathOperator{\bi}{bi}
\DeclareMathOperator{\im}{Im}
\DeclareMathOperator{\supp}{supp}
\DeclareMathOperator{\Lie}{Lie}
\begin{document}

\title{On monomial linearisation and supercharacters  of \\ pattern subgroups}

\author{Qiong Guo$^{*}$, Richard Dipper$^{**}$\\ \\$^{*}${\footnotesize College of Sciences, Shanghai Institute of Technology} \\ {\footnotesize 201418 Shanghai, PR China}
\\ \scriptsize{E-mail:qiongguo@hotmail.com}\smallskip \\$^{**}$ {\footnotesize Institut f\"{u}r Algebra und Zahlentheorie}\\ {\footnotesize Universit\"{a}t Stuttgart, 70569 Stuttgart, Germany}
\\ \scriptsize{E-mail: richard.dipper@mathematik.uni-stuttgart.de}\\
\setcounter{footnote}{-1}\footnote{
{\scriptsize This work was  supported by NSFC no.11601338 and  the 
DFG priority programme SPP 1388 in representation theory, no. 99028426}}
\setcounter{footnote}{-1}\footnote{\scriptsize\emph{2010 Mathematics Subject Classification.} Primary 20C15, 20D15. Secondary 20C33, 20D20 }
\setcounter{footnote}{-1}\footnote{\scriptsize\emph{Key words and phrases.}  Column closed subgroups, Monomial linearisation, Supercharacter}}
\date{}


 \maketitle

\begin{abstract}
Column closed pattern subgroups $U$ of the finite upper unitriangular groups $U_n(q)$ are defined as sets of matrices in $U_n(q)$ having zeros in a prescribed set of columns besides the diagonal ones. We explain Jedlitschky's construction of monomial linearisation [\cite{markus}] and apply this to $\C U$ yielding a generalisation of Yan's coadjoint cluster representations of [\cite{yan}] Then we give a complete classification of the resulting supercharacters, by  describing the resulting orbits and determining the Hom-spaces between  orbit modules.
\end{abstract}

\section{Introduction}

It is well known that finding the conjugacy classes of the finite unitriangular groups $U_n(q)$ of the upper unitriangular $n\times n$-matrices over some finite field $\F_q$ simultaneously for all natural numbers $n$ and all prime powers $q$ is a wild problem in the categorial meaning and hence in practice unsolvable. Hence most likely the same is true for the classification of the irreducible complex characters of the finite unitriangular groups. The supercharacter theory, as developed by Andr\'{e} and Yan in [\cite{andre}] respectively [\cite{yan}] for $U_n(q)$  and later generalised to $\F_q$-algebra groups and axiomatized by Diaconis and Isaacs in [\cite{super}] gives a (doable) approximation to the classification problems above. Supercharacters for a finite group $G$ are complex characters of $G$, such that every irreducible character is irreducible constituent of precisely one supercharacter. Supercharacters should be constant on superclasses, which are unions of conjugacy classes of $G$, and  every conjugacy class is contained in precisely one superclass. Moreover the number of superclasses and supercharacters should coincide. In the case  $G=U_n(q)$ the superclasses and supercharacters are classified by Andr\'{e} and Yan, and the supercharacter table is known. 

In this paper we are exclusively concerned with the supercharacter side of the theory. One basic feature of Yan's construction of supercharacters for $U=U_n(q)$ is finding a monomial basis for the group algebra $\C U$ such that the underlying permutation representation decomposes the monomial basis of $\C U$ into many orbits, decomposing the group algebra into  a corresponding direct sum of $\C U$-modules. In this paper we present a generalisation of Yan's original method, called ``monomial linearisation" due to Jedlitschky [\cite{markus}]. To make the paper self contained we present the proofs for Jedlitschky's construction here. 

Having found a monomial linearisation for $\C G$ of course raises the question  finding the $G$-orbits of the underlying permutation action of $G$ on the monomial basis of $\C G$.  This is different from the problem of classifying the resulting (monomial) orbit modules. Different orbits can lead to  characters which are not orthogonal, or even to isomorphic orbit modules and hence the inspection of Hom-spaces between orbit modules is a further problem. 

For so called pattern subgroups $U$ of $U_n(q)$ the monomial linearisation can be done by generalising Yan's construction. It can be shown then, that the characters afforded by the orbit modules are either equal or orthogonal. However, classifying the orbits and finding, which of the orbit modules are isomorphic and which afford different characters is an open problem for most pattern subgroups.

We deal here  with a special class of pattern subgroups $U_J$ of $U_n(q)$ given by prescribing some columns $1\le k_1<k_2<\ldots <k_m\leq n$ and taking all matrices  $u\in U_n(q)$ with $u_{ij}=0$ for all $1\le i <j$ where $ j\in \{k_1,k_2,\ldots ,k_m\}$. Here $u_{ij}$ denotes the entry at position $(i,j)$   of the matrix $u$. The set of these matrices is closed under multiplication and taking inverses and is a pattern subgroup of $U_n(q)$. We call them column closed subgroups. 

For $U_J$  we classify the orbits of the monomial basis of $\C U_J$. The orbit modules either afford orthogonal characters or are isomorphic and  the corresponding characters are the supercharacters of the supercharacter theory  defined in [\cite{super}]. We determine which of the orbit modules are isomorphic and which afford orthogonal characters.

\section{Monomial linearisation}\label{sec1}

In this section we present a general procedure to find in transitive permutation representations of a finite group $G$ a basis on which $G$ acts monomially. In the applications, which we have in mind, the action of $G$ on this monomial basis is not any more transitive, but decomposes the permutation representation into many orbit modules. This general method has been introduced by Jedlitschky in his thesis [\cite{markus}]. To make the article self contained, we present the construction including proofs.

Let $G$ be a finite group acting from the right on a finite abelian group $V$, written additively. For $g\in G, v\in V$, this action is denoted by $(v,g)\mapsto v.g$. Extending this by linearity to the group algebra $KV$ for any field $K$, the group $G$ act on $KV$ as group of $K$-algebra automorphisms from the right. In particular $G$ permutes the idempotents of $KV$, and hence, if $K$ is a splitting field for $V$ of characteristic not dividing the group order $|V|$ of $V$, we obtain a permutation action of $G$ on the basis of $KV$ consisting of the set of primitive idempotents of $KV$.

The group algebras $KG$  can be identified with the $K$-algebra $K^G$  of functions from $G$ to $K$, where the
multiplication of functions $\tau, \rho: G\rightarrow K $ is defined by
\[
(\tau\rho)(h)=\sum_{x\in G}\tau(x)\rho(x^{-1}h)
\]
The $K$-algebra isomorphisms $K^G\rightarrow KG$ is given by $\tau\mapsto \sum _{g\in G}\tau(g)g$ for $\tau\in K^G$.  Similarly we get $K^V\cong KV$.
The extension of the action of $G$ on $V$ to the group algebra $KV\cong K^V$ is given by $\tau\mapsto\tau.g$ for $\tau\in K^V, g\in G$, where $(\tau.g)(v)=\tau(v.g^{-1})$ for $v\in V$.

For simplicity let from now on $K$ be the complex field $\C$. Then $\C V$ is semi-simple. The irreducible $\C V$-modules are one dimensional and hence afford linear characters, that is homomorphisms from $V$ to $\C^*=\C\setminus\{0\}$, the multiplicative group of non-zero complex numbers. Let $\hat V=\Hom\big((V,+),\C^*\big)$ be the set of linear characters of $V$. Then $\hat V\subseteq \C^V\cong \C V$. Indeed for $\chi\in \hat V$ we have $\chi\leftrightarrow \sum_{v\in V}\chi(v)v$. But
$\sum_{v\in V}\chi(v)\in \C V$ is  up to a factor $|V|$ the primitive idempotent associated with the complex conjugate linear character $\bar{\chi}$ defined by $\bar{\chi}(v)=\big(\chi(v)\big)^{-1}=\chi(-v)\in \C^*$.
As a consequence $\hat V$ is a $\C$-basis of $\C^V$ and the linear extension of the $G$-action on $V$ permutes $\hat V$.

We next show that the number of orbits of $G$ on $V$ equals the number of orbits of $G$ on $\hat V$. In order to do this we employ the following lemma, well-known as the lemma that is not Burnside's:
\begin{Lemma}\label{burn}
Let the finite group $G$ act on a finite set $X$. For $g\in G$ let $X^g=\{x\in X\,|\,xg=x\}$. then the number $|X/G|$ of orbits of $G$ on $X$ is given as
\[
|X/G|=\frac{1}{|G|}\sum_{g\in G}|X^g|
\]
(i.e. as ``average number of points fixed by an element of $G$''). \hfill $\square$
\end{Lemma}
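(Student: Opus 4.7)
The plan is the standard double-counting argument for the orbit counting lemma, applied to the incidence set
\[
S = \{(g,x)\in G\times X \mid x.g = x\}.
\]
First I would count $|S|$ by summing over the first coordinate: for each $g\in G$ the number of $x$ with $(g,x)\in S$ is exactly $|X^g|$, giving $|S|=\sum_{g\in G}|X^g|$, the right-hand side of the claimed identity multiplied by $|G|$.

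Next I would count $|S|$ by summing over the second coordinate. For each $x\in X$ the number of $g\in G$ with $(g,x)\in S$ is the order of the stabiliser $\Stab_G(x)=\{g\in G\mid x.g=x\}$. Hence
\[
|S|=\sum_{x\in X}|\Stab_G(x)|.
\]
The key input is then the orbit-stabiliser theorem, which gives $|\Stab_G(x)|=|G|/|xG|$ where $xG$ denotes the $G$-orbit of $x$.

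Finally I would group the sum over $X$ according to orbits: if $\mathcal{O}\subseteq X$ is an orbit then every $x\in\mathcal{O}$ satisfies $|\Stab_G(x)|=|G|/|\mathcal{O}|$, so $\sum_{x\in\mathcal{O}}|\Stab_G(x)|=|\mathcal{O}|\cdot|G|/|\mathcal{O}|=|G|$. Summing over the $|X/G|$ orbits yields $|S|=|G|\cdot|X/G|$. Equating the two expressions for $|S|$ and dividing by $|G|$ gives the stated formula. No step is a real obstacle; the only ingredient beyond bookkeeping is the orbit-stabiliser theorem, which is standard and independent of the monomial linearisation context in which the lemma is being invoked.
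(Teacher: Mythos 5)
Your proof is correct and is the standard double-counting argument via the orbit-stabiliser theorem; the paper itself states this lemma without proof (closing it with a $\square$ as a well-known fact, the ``lemma that is not Burnside's''), so there is nothing in the paper's treatment to diverge from. Your write-up would serve perfectly well as the omitted proof.
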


As a consequence we get our desired count of orbits of $G$ on $V$ and $\hat V$, respectively adapting an argument of Diaconis and Isaacs in [\cite{super}; 4.1] to this slightly different situation.

\begin{Cor}\label{2.2}
Let $G$ act on $V$ as above. Then the number of orbits of the actions of $G$ on $V$ and $\hat V$ coincide.
\end{Cor}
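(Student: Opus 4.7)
The plan is to apply Lemma \ref{burn} to both actions and show that, for every fixed $g \in G$, the numbers of fixed points $|V^g|$ and $|\hat V^g|$ agree; summing over $g$ and dividing by $|G|$ will then yield the claim.

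First I would unpack the action of $G$ on $\hat V$. Since $(\chi.g)(v) = \chi(v.g^{-1})$, we have $\chi \in \hat V^g$ if and only if $\chi(v.g^{-1}) = \chi(v)$ for all $v \in V$, equivalently $\chi(v.g^{-1} - v) = 1$ for all $v \in V$. Since $G$ acts on $V$ as group automorphisms and $V$ is abelian, the map $\phi_g \colon V \to V$, $v \mapsto v.g^{-1} - v$, is a group homomorphism. Let $W_g = \im(\phi_g)$. Then $\hat V^g$ is precisely the set of linear characters of $V$ that vanish on $W_g$, i.e.\ the characters that factor through the quotient $V/W_g$.

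Next I would count both sides. On the one hand $V^g = \ker(\phi_g)$, so by the first isomorphism theorem $|V^g| = |V|/|W_g|$. On the other hand, the characters of $V$ trivial on $W_g$ are in natural bijection with $\widehat{V/W_g}$, and a finite abelian group has the same order as its dual, giving $|\hat V^g| = |V/W_g| = |V|/|W_g|$. Thus $|V^g| = |\hat V^g|$ for every $g \in G$, and Lemma \ref{burn} yields
\[
|V/G| = \frac{1}{|G|}\sum_{g\in G}|V^g| = \frac{1}{|G|}\sum_{g\in G}|\hat V^g| = |\hat V/G|.
\]

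There is no serious obstacle; the only point that needs care is recognising that the condition $\chi(v.g^{-1}) = \chi(v)$ is a \emph{linear} condition on $v$ (because $\chi$ is a homomorphism), so the fixed character set is cut out by vanishing on the subgroup $W_g$ rather than merely on a subset. Once this is noted, the argument is a standard duality count for finite abelian groups.
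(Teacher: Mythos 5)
Your proof is correct and follows essentially the same route as the paper: reduce via the orbit-counting lemma to comparing fixed-point counts, identify the fixed characters with those vanishing on the subgroup $W_g$ of differences, and equate $|V^g|$ with $|\widehat{V/W_g}|$ using the first isomorphism theorem and the self-duality of finite abelian groups. The only cosmetic difference is that you work with $v \mapsto v.g^{-1}-v$ where the paper uses $v \mapsto v.g-v$, which produce the same subgroup.
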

\begin{proof}
In view of \ref{burn} it suffices to show, that for all $g\in G$ the number of elements $v\in V$ satisfying $v.g=v$ and $\tau\in \hat V$ satisfying $\tau.g=\tau$ coincide.

Now for $g\in G$ and $\tau\in \hat V$ we have $\tau.g=\tau$ if and only if $\tau.g^{-1}=\tau$. But this is equivalent to 
\[
\tau(v.g)=\tau.g^{-1}(v)=\tau(v)
\]
and hence to $v.g-v\in \ker \tau=\{u\in V\,|\,\tau(u)=1\}$. For $g\in G$ let $W_g=\{v.g-v\,|\,v\in V\}$. Then $W_g\leq V$ and therefore $\{\tau\in \hat V\,|\,W_g\subseteq \ker \tau\}$ is in bijection with $\widehat{V/W_g}=\Hom\big((V/W_g,+), \C^*\big)$. Consequently the number of linear characters of $V$ fixed by $g\in G$ is exactly the index $[V:W_g]=\frac{|V|}{|W_g|}$ of $W_g$ in $V$.

Obviously the maps $v\mapsto v.g-v$ is an epimorphism from $V$ onto $W_g$ and the kernel $K$ of that map is the set of elements of $V$ fixed by $g\in G$. But $W_g\cong V/K$ and hence $|W_g|=\frac{|V|}{|K|}$. This implies $|K|=\frac{|V|}{|W_g|}=[V:W_g]$ and the result follows.
\end{proof}

Note that given a map $f: G\rightarrow V$ we always have a map $f^*:\C^V\rightarrow \C^G$ given by $\tau\mapsto \tau\circ f=f^*(\tau)$ for any map $\tau:V\rightarrow \C$. A map $f: G\rightarrow V$ is called a {\bf right 1-cocycle}, if it satisfies
\begin{equation}\label{2.3}
f(xg)=f(x).g+f(g),
\end{equation}
for all $x,g\in G$.

\begin{Theorem}[Jedlitschky, see {[\cite{markus}]}]\label{2.4}
Let $G$ act on $V$ by automorphisms from the right and let $f: G\rightarrow V$ be a right 1-cocycle. Then
the group algebra $\C^ V$ becomes a monomial $\C G$-module with monomial basis $\hat V=\Hom\big((V,+),\C^*\big)$ setting
\begin{equation}\label{monomial action}
\chi g=\chi(f(g^{-1}))\chi.g 
\end{equation}
for $\chi\in \hat V, g\in G$.  Here $\chi\mapsto \chi.g$ denotes the permutation action of $G$ on $\hat V$ obtained by extending the action of $G$ on $V$ to $\C^V\cong\C V$ by linearity. Moreover, with respect to the monomial action (\ref{monomial action}), the map $f^*: \C^V\rightarrow \C^G$ is  $\C G$-linear.
\end{Theorem}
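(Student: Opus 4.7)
My plan is to verify the two parts separately: first that (\ref{monomial action}) defines a right $\C G$-module structure (monomiality then being automatic), and second that $f^*$ intertwines this structure with the regular right action on $\C^G\cong\C G$. Everything reduces to the cocycle identity (\ref{2.3}) plus the fact that $\chi\in\hat V$ is a homomorphism from $(V,+)$ to $\C^*$.

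For the module axiom I would first check $\chi\cdot 1=\chi$: setting $x=g=1$ in (\ref{2.3}) gives $f(1)=f(1)+f(1)$, hence $f(1)=0$, and so $\chi\cdot 1=\chi(f(1))\,\chi.1=\chi$. For associativity I compute, for $g_1,g_2\in G$,
\[
(\chi g_1)g_2=\chi(f(g_1^{-1}))\,(\chi.g_1)(f(g_2^{-1}))\,(\chi.g_1).g_2
=\chi(f(g_1^{-1}))\,\chi\bigl(f(g_2^{-1}).g_1^{-1}\bigr)\,\chi.(g_1g_2),
\]
using the definition $(\chi.g_1)(v)=\chi(v.g_1^{-1})$ and the fact that $G$ acts on $\hat V$. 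Since $\chi$ is additive-to-multiplicative, the scalar factor collapses to $\chi\bigl(f(g_2^{-1}).g_1^{-1}+f(g_1^{-1})\bigr)$, and the cocycle relation applied to $x=g_2^{-1}$, $g=g_1^{-1}$ rewrites the argument as $f(g_2^{-1}g_1^{-1})=f((g_1g_2)^{-1})$. Hence $(\chi g_1)g_2=\chi(g_1g_2)$. Monomiality is then immediate: $\chi g$ is the scalar $\chi(f(g^{-1}))\in\C^*$ times the basis vector $\chi.g\in\hat V$.

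For the second assertion, recall that under $\C G\cong\C^G$ the right regular action is $(\rho g)(h)=\rho(hg^{-1})$ for $\rho\in\C^G$, $g,h\in G$. Given $\chi\in\hat V$ I evaluate both sides at arbitrary $h\in G$. On the one hand,
\[
(f^*(\chi)\,g)(h)=f^*(\chi)(hg^{-1})=\chi\bigl(f(hg^{-1})\bigr)
=\chi\bigl(f(h).g^{-1}+f(g^{-1})\bigr)
=\chi(f(h).g^{-1})\,\chi(f(g^{-1})),
\]
where the second equality uses (\ref{2.3}) with $x=h$, $g=g^{-1}$. On the other hand,
\[
f^*(\chi g)(h)=(\chi g)(f(h))=\chi(f(g^{-1}))\,(\chi.g)(f(h))=\chi(f(g^{-1}))\,\chi(f(h).g^{-1}),
\]
which agrees. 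Since this holds for all basis vectors $\chi$ and all $g\in G$, $f^*$ is $\C G$-linear by $\C$-linear extension.

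The main obstacle is essentially bookkeeping: one must keep the additive convention on $V$ and multiplicative convention on $\C^*$ straight, apply the cocycle identity with the correct substitution (notably, with inverse group elements, so that $f((g_1g_2)^{-1})=f(g_2^{-1}g_1^{-1})$ gets expanded rather than $f(g_1g_2)$ directly), and remember that the action of $G$ on $\hat V$ is defined by $(\chi.g)(v)=\chi(v.g^{-1})$ so an inverse appears inside $\chi$ whenever a permutation action is unwound.
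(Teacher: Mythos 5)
Your proof is correct and follows essentially the same two computations as the paper: verifying $(\chi g_1)g_2=\chi(g_1g_2)$ via the cocycle identity, and checking $f^*(\chi g)=f^*(\chi)g$ pointwise on $G$. The only addition is your explicit verification of the identity axiom $\chi\cdot 1=\chi$ (via $f(1)=0$), which the paper leaves implicit; this is a harmless and reasonable bit of thoroughness, not a different method.
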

\begin{proof}
In order to prove that \ref{monomial action} defines a $\C G$-module structure on $\C^V$ it suffices to check that $(\chi g) h=\chi(gh)$ for all $\chi\in \hat V, g,h\in G$ holds. In fact
\begin{eqnarray}
(\chi g) h&=&\big(\chi(f(g^{-1}))\chi.g \big)h=\chi(f(g^{-1}))\big((\chi.g)h\big)\nonumber\\
&=&\chi(f(g^{-1}))\big(\chi.g(f(h^{-1}))(\chi.g).h=\chi(f(g^{-1}))\big(\chi(f(h^{-1}).g^{-1})\chi.(gh)\nonumber\\
&=&\chi\big(f(g^{-1})+f(h^{-1}).g^{-1}\big)\chi.(gh)=\chi\big(f(h^{-1}g^{-1})\big)\chi.(gh)\nonumber\\
&=&\chi\big(f((gh)^{-1}))\chi.(gh)=\chi.(gh)\nonumber
\end{eqnarray}
as desired.

To prove that $f^*: \C^V\rightarrow \C^G$ is $\C G$-linear it suffices to show the maps $f^*(\chi g)$ and $f^*(\chi)g, (\chi\in \hat V, g\in G)$ from $G$ to $\C$ coincide . So let $h\in G$. Then
\begin{eqnarray}
\big(   f^*(\chi g)   \big) (h)&=&(\chi g) (f(h))\nonumber\\
&\stackrel{\ref{monomial action}}{=} &\chi\big( f(g^{-1}) \big)\chi.g\big(f(h)\big)=
\chi\big( f(g^{-1}) \big)\chi\big(f(h).g^{-1}\big)\nonumber\\
&=&\chi\big(   f(g^{-1})+     f(h).g^{-1}           \big)=\chi\big(   f(hg^{-1})  \big)\nonumber\\&=&\big(f^*(\chi)\big)(hg^{-1})=f^*(\chi)g(h)\nonumber
\end{eqnarray}
and hence $  f^*(\chi g) =f^*(\chi)g$ as desired.
\end{proof}

Note in the notation of \ref{2.4} that $\im f^*$ is a right ideal of $\C ^G\cong \C G$, and we call the monomial $\C G$-module $\C V\cong \C^V$ the \textbf{monomial linearisation} of $\im f^*$.

\begin{Lemma}\label{2.6}
Let $G, V$ be defined as in \ref{2.4} and let $f: G\rightarrow V$ a map. Then $f^*$ is injective, (surjective, bijective) if and only if $f$ is surjective (injective, bijective).
\end{Lemma}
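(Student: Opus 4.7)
The plan is to reduce everything to the standard basis of $\C^V$ by characteristic functions of singletons. For each $v \in V$ let $\delta_v \in \C^V$ denote the function with $\delta_v(v) = 1$ and $\delta_v(w) = 0$ for $w \neq v$; these form a $\C$-basis of $\C^V$, and similarly $\{\delta_g\}_{g \in G}$ is a basis of $\C^G$. The key computation is that, for $g \in G$,
\[
\bigl(f^*(\delta_v)\bigr)(g) = \delta_v\bigl(f(g)\bigr) = \begin{cases} 1 & \text{if } f(g) = v,\\ 0 & \text{otherwise},\end{cases}
\]
so $f^*(\delta_v)$ is precisely the characteristic function of the fibre $f^{-1}(v) \subseteq G$.

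For the injectivity statement I would argue as follows. Since the $\delta_v$ form a basis of $\C^V$, the map $f^*$ is injective if and only if the family $\bigl\{f^*(\delta_v)\bigr\}_{v \in V}$ is linearly independent in $\C^G$. The characteristic functions of the fibres $f^{-1}(v)$ for distinct $v$ have pairwise disjoint supports; hence the nonzero ones are automatically linearly independent. Now $f^*(\delta_v) = 0$ if and only if $f^{-1}(v) = \emptyset$, i.e.\ $v \notin \im f$. Thus $f^*$ is injective precisely when every $v \in V$ lies in $\im f$, i.e.\ when $f$ is surjective.

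For the surjectivity statement, observe that $\im f^*$ is the $\C$-span of the characteristic functions of the fibres $f^{-1}(v)$, i.e.\ exactly the subspace of $\C^G$ consisting of functions that are constant on fibres of $f$. This subspace equals all of $\C^G$ if and only if every fibre is either empty or a singleton, which is exactly the condition that $f$ be injective. For the ``only if'' direction one can also give the explicit obstruction: if $f(g_1) = f(g_2)$ with $g_1 \neq g_2$, then every $\rho \in \im f^*$ satisfies $\rho(g_1) = \rho(g_2)$, so $\delta_{g_1} \notin \im f^*$.

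The bijective case follows at once by combining the two. I do not expect any real obstacle; the only care needed is to remember that $\C^V$ carries two bases (the $\delta_v$ and the characters $\hat V$), and to make sure the argument is carried out using the basis $\{\delta_v\}$, where the action of $f^*$ is transparent, rather than on $\hat V$, where the interaction with the cocycle structure could obscure the purely set-theoretic content of the statement.
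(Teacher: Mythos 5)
Your proof is correct, and it takes a somewhat different route from the paper's. The paper argues elementwise in each of the four implications: it shows directly that two functions agreeing on $\im f$ agree everywhere when $f$ is surjective; it perturbs a function at a point outside $\im f$ when $f$ is not; it explicitly constructs a preimage under $f^*$ when $f$ is injective; and it shows $f^*$ surjective forces $f(g)\neq f(h)$ for $g\neq h$. Your version instead reads everything off the delta-function basis: the one computation $f^*(\delta_v)=\mathbf{1}_{f^{-1}(v)}$ identifies $f^*$ on a basis, after which injectivity of $f^*$ is exactly the statement that no fibre is empty, and $\im f^*$ is identified as the space of functions constant on fibres, which is all of $\C^G$ exactly when the fibres are singletons. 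This is more structural and arguably more illuminating (you get a description of $\ker f^*$ and $\im f^*$ for free), at the small cost of invoking the disjoint-support linear-independence fact, whereas the paper's version is more hands-on and avoids any appeal to a basis of $\C^V$ other than what the argument itself constructs. Both are complete; your closing caution about not working in the $\hat V$ basis is well placed, since the cocycle plays no role in this lemma.
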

\begin{proof}
Let $f$ be surjective and suppose $\tau\circ f=f^*(\tau)=f^*(\sigma)=\sigma\circ f$ for  $\tau, \sigma: V\rightarrow \C$. Since $f$ is surjective this implies $\tau=\sigma$, that is $f^*$ is injective. Now suppose that $f$ is not surjective and let $v\in V, v\notin \im f$. Let $\tau\in \C^V$. Define $\sigma: V\rightarrow \C$ by $\sigma(u)=\tau(u)$ for $v\not=u\in V$ and $\sigma(v)=\tau(v)-1$. Then $\tau\not=\sigma$ but $f^*(\tau)=\tau\circ f=\sigma\circ f=f^*(\sigma)$, that is $f^*$ is not injective. Thus $f^*$ is injective if and only if $f$ is surjective.

If $f$ is injective and $\rho: G\rightarrow \C$ is a map, then define $\tau: V\rightarrow \C$ by $\tau(v)=\rho(f^{-1}(v))$ for $v\in \im f$ and $\tau(v)=0$ for $v\notin \im f$. Then $f^*(\tau)=\rho$, and $f^*$ is surjective.
Finally let $g,h\in G, g\not=h$ and let $\sigma: G\rightarrow \C$ be given by $\sigma(g)=1$ and $\sigma(h)=\sigma(x)=0$ for $g\not=x\in G$. Suppose $f^*$ is surjective. Then there exists $\alpha: V\rightarrow \C$ with $\alpha\circ f=f^*(\alpha)=\sigma$. Thus 
$1=\sigma(g)=\alpha\circ f(g)\not=0=\sigma(h)=\alpha\circ f(h)$. Therefore $f(g)\not=f(h)$ proving that $f$ is injective.
\end{proof}

As an immediate consequence we have:

\begin{Theorem}
Let $f: G\rightarrow V$ be a bijective right 1-cocycle. Then the $\C G$-module $\C^V$ of \ref{monomial action} is isomorphic to the right regular representation $\C G_{\C G}$ of  $\C G$, the isomorphism from $\C^V$ to $\C^G\cong \C G$ given by $f^*$.
\end{Theorem}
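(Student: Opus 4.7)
The plan is to observe that this theorem is essentially a packaging of what has already been established: Theorem \ref{2.4} supplies $\C G$-linearity of $f^*$, and Lemma \ref{2.6} supplies bijectivity, and a bijective $\C G$-linear map is exactly a $\C G$-module isomorphism. So the whole proof amounts to citing these two results and then identifying the target.

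More concretely, I would first invoke Theorem \ref{2.4}: since $f: G \to V$ is a right $1$-cocycle, the formula (\ref{monomial action}) equips $\C^V$ with the structure of a monomial $\C G$-module with monomial basis $\hat V$, and the map $f^*: \C^V \to \C^G$ is $\C G$-linear with respect to this action (and the right action of $G$ on $\C^G$ given by $(\tau g)(h) = \tau(hg^{-1})$, as read off from the chain of equalities proving $\C G$-linearity in \ref{2.4}). Next I would invoke Lemma \ref{2.6} applied in its "bijective" case: since $f$ is bijective, $f^*$ is bijective. Combining these two facts gives that $f^*$ is a $\C G$-module isomorphism from $\C^V$ (with the monomial action) onto $\C^G$.

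It then remains only to identify $\C^G$ as a right $\C G$-module with $\C G_{\C G}$. This is immediate from the $\C$-algebra isomorphism $\C^G \to \C G$, $\tau \mapsto \sum_{g \in G} \tau(g)\, g$, recalled at the beginning of the section: right multiplication by $h \in G$ on $\sum_g \tau(g) g$ produces $\sum_g \tau(g) gh = \sum_x \tau(xh^{-1}) x$, which in the function picture is precisely the action $(\tau h)(x) = \tau(xh^{-1})$ that $f^*$ intertwines with. Hence $\C^G \cong \C G_{\C G}$ as right $\C G$-modules via this identification, and composing this with $f^*$ yields the claimed isomorphism $\C^V \cong \C G_{\C G}$.

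There is no genuine obstacle here; the hard work was done in \ref{2.4} and \ref{2.6}, and the present statement is the corollary that deserves to be recorded because it makes the slogan \emph{monomial linearisation} literal: under a bijective cocycle, one recovers the full regular representation, merely re-expressed on a new ($G$-monomial) basis $\hat V$. The only mildly delicate point is to be explicit about which right action on $\C^G$ is meant, and that is pinned down unambiguously by the computation already written out inside the proof of Theorem \ref{2.4}.
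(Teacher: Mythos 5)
Your proposal is correct and takes exactly the route the paper intends: the theorem is stated as "an immediate consequence" of Theorem \ref{2.4} ($\C G$-linearity of $f^*$) and Lemma \ref{2.6} (bijectivity of $f^*$), plus the standing identification $\C^G\cong\C G$. Your explicit check that right multiplication by $h$ in $\C G$ corresponds to $(\tau h)(x)=\tau(xh^{-1})$ in $\C^G$, and that this is the action $f^*$ intertwines in the proof of \ref{2.4}, is precisely the detail the paper leaves tacit.
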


Note that if $f:G\rightarrow V$ is bijective, then $f^{-1}: V\rightarrow G$ extended by linearity to $f^{-1}: \C V\rightarrow \C G$ is  $f^*: \C^V\rightarrow \C^G$      
 identifying  $\C ^V$ and $ \C V$ respectively $\C ^G$ and $ \C G$.

\begin{Remark}
In this paper we shall deal exclusively with bijective 1-cocycles and  monomial linearisation of the regular representation $\C G_{\C G}$ of $\C G$. More general it can be shown that the kernel $H=\ker f=\{g\in G\,|\, f(g)=0\}$ for any right 1-cocycle $f$ is an (in general not normal) subgroup of $G$ and that $\C^V$ is isomorphic by $f^*$ to the right ideal of $\C G$ generated by the trivial idempotent $|H|^{-1}\sum_{h\in H}h\in \C G$, if $f$ is surjective [\cite{embed}, Theorem 2.8].
Thus $\C^V$ is isomorphic to the transitive permutation module of $G$ acting on the cosets of $H$ in $G$ in this case. In the applications, we shall consider later with $H=\ker f=(1)$, i.e. $f$ is bijective, the action of $G$ on $V$ produces many orbits and hence in view of (\ref{2.2}) and (\ref{monomial action}) the regular $\C G$-module $\C G_{\C G}$ being isomorphic to the monomial $\C G$-module $\C^V$ decomposes into a direct sum of corresponding orbit modules arising from the monomial action of $G$ on $\hat V$.

\end{Remark}

So far we worked exclusively on the right using a right action of $G$ on $V$ and a right 1-cocycle $f: G\rightarrow V$. There is an obvious left hand sided analogue of all this, starting with a left action of $G$ on $V$ by automorphisms and a left 1-cocycle $f: G\rightarrow V$ satisfying $f(gx)=g.f(x)+f(g)$ for $g,x\in G$. Moreover, if $G$ acts on $V$ from the left as well as from the right by automorphisms such that $g.(v.h)=(g.v).h$ for all $g,h\in G, v\in V$ and if $f: G\rightarrow V$ is both, a left and right 1-cocycle, (\ref{monomial action}) and its left hand sided analogue define a monomial $\C G$-$\C G$-bimodule structure on $\C ^V$ with monomial basis $\hat V$. Moreover $f^*:\C ^V\rightarrow \C^G\cong \C G$ is a bimodule homomorphism. As a consequence $\im f^*$ is an ideal of $\C G$ and $\C^V\cong\, _{\C G}\C G_{\C G}$ as bimodule, if $f$ is bijective.

\section{Pattern subgroups and supercharacters}
Let $q$ be a power of some prime number $p$ and denote the field with $q$ elements by $\F_q$. Let $M_n(q)$ be the $\F_q$-algebra of $n\times n$-matrices with entries in $\F_q$ where $n\in \N$. For $A\in M_n(q), 1\leq i,j\leq n$, we denote the entry at position $(i,j)$ of $A$ by $A_{ij}$ and the matrix $A$ with $A_{ij}=1$ and $A_{kl}=0$ for $1\leq k,l\leq n$, $(k,l)\neq (i,j)$ by $e_{ij}$. Then $\{e_{ij}\,|\,1\leq i,j\leq n\}$ is an $\F_q$-basis of $M_n(q)$ and for $B\in M_n(q)$  we have $B=\sum_{1\leq i,j\leq n}B_{ij}e_{ij}$. If $M_n(q)^*=\Hom_{\F_q}(M_n(q), \F_q)$ denotes the dual $\F_q$-vector space, the $\F_q$-basis of $M_n(q)^*$ dual to the basis consisting of the matrix units $e_{ij}\, (1\leq i, j\leq n)$ is given by the coordinate functions $\epsilon_{ij}\,(1\leq i, j\leq n)$ defined by 
\[
\epsilon_{ij}(e_{kl})=\begin{cases}
1 & \text{for } (i,j)=(k,l),\\
0 & \text{otherwise}.
\end{cases}
\]
Then $\epsilon_{ij}(A)=A_{ij}\in \F_q$ for $A\in M_n(q),\, A=\sum_{1\leq i, j\leq n}A_{ij}e_{ij}$. We denote the set of all positions $\{(i,j),\,1\leq i\neq j\leq n\}$ of entries in $n\times n$-matrices by $\Phi$. Then $\Phi=\Phi^+\dot\cup \Phi^-$ with
$\Phi^+=\{(i,j)\,|\,1\leq i< j\leq n\}, \Phi^-=\{(i,j)\,|\,1\leq j< i\leq n\}$. For $(i,j)\in \Phi$ and $\alpha\in \F_q$ define $x_{ij}(\alpha)=E+\alpha e_{ij}$ where $E$ denotes the identity matrix $E=\sum_{i=1}^n e_{ii}$ and 
$X_{ij}=\{x_{ij}(\alpha)\,|\,\alpha\in \F_q\}$, (called \textbf{root subgroup}). Then
$X_{ij}$ is a subgroup of $GL_n(q)$ isomorphic to  $(\F_q,+).$ For $1\leq i\leq n$ and $ \alpha\in \F_q$ define $H_i=\{h_i(\alpha)=E+(\alpha-1) e_{ii}\,|\, \alpha \in \F_q^*=\F_q \setminus\{0\}\}$. By Bruhat decomposition we have $GL_n(q)=\langle X_{ij}, H_i\,|\, 1\leq i,j \leq n, i\neq j \rangle$.

For $A\in M_n(q)$ the set $\{1\leq i,j\leq n\,|\,A_{ij}\neq 0\}$  is called the \textbf {support} of $A$ and is denoted by $\supp(A)$. For any $L\subseteq \{(i,j)\,|\,1\leq i,j\leq n\}$ we may define $V_L=\{A\in M_n(q)\,|\,\supp A\subseteq L\}$. Then $V_L$ has basis $\{e_{ij}\,|\,(i,j)\in L\}$ and the dual space $V_L^*=\Hom_{\F_q}(V_L, \F_q)$ has basis $\epsilon_{ij}^L$ defined by
\[
\epsilon^L_{ij}(e_{kl})=\begin{cases}
1 & \text{for } (i,j)=(k,l)\\
0 & \text{otherwise}
\end{cases},\quad \text {where } (k,l)\in L.
\]
Note that if $J\subseteq L$ and $(i,j)\in J$, we get $\epsilon_{ij}^J$ as restriction of  $\epsilon_{ij}^L$ to $V_J$. Usually, if no ambiguities may arise, we hence omit superscripts $J$ and $L$. For $L$ as above and $A=\sum_{(i,j)\in L}\alpha_{ij}e_{ij}\in V_L$ we denote the corresponding linear form $\sum_{(i,j)\in L}\alpha_{ij}\epsilon_{ij}\in V_L^*$ by $A^*_L=A^*$. Thus the following lemma is obvious:

\begin{Lemma}
Let $J\subseteq \{(i,j)\,|\,1\leq i, j\leq n\}$ as above, $J\subseteq L$, and let 
$\pi_{_J}: M_n(\F_q)\rightarrow V_J: \sum_{(i,j)}\alpha_{ij}e_{ij}\mapsto\sum_{(i,j)\in J}\alpha_{ij}e_{ij}$ be the natural projection. Then the restriction  of $A^*_L$ to $V_J $  is $\big(\pi_{_J}(A)\big)^*_J.$ \hfill$\square$
\end{Lemma}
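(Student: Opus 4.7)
The lemma reduces to unpacking the definitions, using only the compatibility remark stated just before it, namely that $\epsilon^J_{ij}$ is the restriction of $\epsilon^L_{ij}$ to $V_J$ whenever $J\subseteq L$ and $(i,j)\in J$. My plan is a direct computation on both sides, checking equality of the two linear forms on an arbitrary element of $V_J$.

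First I would write $A=\sum_{(i,j)\in L}\alpha_{ij}e_{ij}\in V_L$, so that by the definition given just above the lemma
\[
A^*_L=\sum_{(i,j)\in L}\alpha_{ij}\,\epsilon^L_{ij}\in V_L^*.
\]
Since $J\subseteq L$, the projection $\pi_J$ simply drops those matrix units indexed by $L\setminus J$, giving
\[
\pi_J(A)=\sum_{(i,j)\in J}\alpha_{ij}e_{ij}\in V_J,\qquad \bigl(\pi_J(A)\bigr)^*_J=\sum_{(i,j)\in J}\alpha_{ij}\,\epsilon^J_{ij}\in V_J^*.
\]

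Next I would fix an arbitrary $B=\sum_{(k,l)\in J}\beta_{kl}e_{kl}\in V_J$ and evaluate both linear forms at $B$. On the one hand, the restriction of $A^*_L$ to $V_J$ gives $A^*_L(B)=\sum_{(i,j)\in L}\alpha_{ij}\epsilon^L_{ij}(B)$; by the definition of the dual basis, $\epsilon^L_{ij}(e_{kl})=\delta_{(i,j),(k,l)}$, so only indices $(i,j)\in L$ with $(i,j)\in J$ contribute, producing $\sum_{(i,j)\in J}\alpha_{ij}\beta_{ij}$. On the other hand, $(\pi_J(A))^*_J(B)=\sum_{(i,j)\in J}\alpha_{ij}\epsilon^J_{ij}(B)=\sum_{(i,j)\in J}\alpha_{ij}\beta_{ij}$, using the same orthogonality for $\epsilon^J_{ij}$. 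The two values coincide, and since $B\in V_J$ was arbitrary the two linear forms on $V_J$ agree.

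There is no real obstacle here; the only substantive observation is the compatibility of $\epsilon^L_{ij}$ and $\epsilon^J_{ij}$ for $(i,j)\in J\subseteq L$, which is already recorded in the paragraph preceding the lemma and justifies identifying the two sums term by term.
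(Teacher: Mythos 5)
Your proof is correct and takes the route the paper implicitly has in mind: since the lemma is declared obvious with no written proof, the intended argument is exactly the definition-unpacking you carry out, evaluating both linear forms on a basis of $V_J$ and using the duality relation $\epsilon^L_{ij}(e_{kl})=\delta_{(i,j),(k,l)}$. The computation is complete and sound.
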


Let $1\leq k,l\leq n, k\not=l$ and $\alpha\in \F_q$. We want to investigate the action of $x_{kl}(\alpha)$ on $M_n(q)^*$. Obviously it suffices to determine $\epsilon_{ij}.x_{kl}(\alpha)$ for $1\leq i,j \leq n$. Expanding  $\epsilon_{ij}.x_{kl}(\alpha)=\sum_{1\leq s,t \leq n}\lambda_{st}\epsilon_{st}$ we may determine the coefficients in this expansion as 
$\lambda_{st}=\big(\epsilon_{ij}.x_{kl}(\alpha)\big)(e_{st})$. In fact
\begin{eqnarray*}
\lambda_{st}&=&\epsilon_{ij}\big((e_{st}x_{kl}^{-1}(\alpha)\big)=\epsilon_{ij}\big((e_{st}x_{kl}(-\alpha)\big)\\
&=& \epsilon_{ij}(e_{st}-\alpha \delta_{tk}e_{sl})=
\begin{cases}
\epsilon_{ij}(e_{st}), & \text{ for } t \neq k,\\
\epsilon_{ij}(e_{st}-\alpha e_{sl}), & \text{ for } t=k.
\end{cases}
\end{eqnarray*}
Note in the case $t=k$ that  $(i,j)=(s,t)=(s,k)$ implies $(i,j)\neq (s,l)$ and that $(i,j)=(s,l)$ implies $(i,j)\neq (s,t)=(s,k)$,
since $k\neq l$. Thus
\[
\lambda_{st}=
\begin{cases}
1 & \text{ if } (i,j)=(s,t)\\
-\alpha & \text{ if } (i,j)=(s,l) \text{ and } t=k\\
0 & \text{ otherwise.}
\end{cases}
\]
We have shown:
 \begin{Lemma}\label{3.2}
 Let $1\leq i,j, k,l\leq n, k\not=l$ and $\alpha\in \F_q$. Then
 \[
\epsilon_{ij}.x_{kl}=
 \begin{cases}
 \epsilon_{ij}& \text{ for } j\neq l\\
  \epsilon_{ij}-\alpha   \epsilon_{ik} & \text{ for } j= l.
 \end{cases}
 \]
 \hfill$\square$
 \end{Lemma}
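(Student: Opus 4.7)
The plan is to compute the coefficients of $\epsilon_{ij}.x_{kl}(\alpha)$ in the dual basis $\{\epsilon_{st}\}$ directly from the definition of the dual action. Recall from Section~\ref{sec1} that the right action on the dual is given by $(\epsilon_{ij}.g)(v)=\epsilon_{ij}(v.g^{-1})$. Writing $\epsilon_{ij}.x_{kl}(\alpha)=\sum_{s,t}\lambda_{st}\epsilon_{st}$, the coefficient $\lambda_{st}$ is recovered by evaluating at the matrix unit $e_{st}$, i.e.\ $\lambda_{st}=\epsilon_{ij}\bigl(e_{st}\cdot x_{kl}(-\alpha)\bigr)$.

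The first step is a direct matrix calculation: using $e_{st}e_{kl}=\delta_{tk}e_{sl}$ one obtains $e_{st}\cdot x_{kl}(-\alpha)=e_{st}-\alpha\delta_{tk}e_{sl}$. Applying $\epsilon_{ij}$, which picks out the $(i,j)$-entry, gives $\lambda_{st}=\delta_{is}\delta_{jt}-\alpha\,\delta_{is}\delta_{jl}\delta_{tk}$. The second step is the case split. If $j\neq l$, the second term vanishes identically and only the Kronecker pair $\delta_{is}\delta_{jt}$ survives, so $\epsilon_{ij}.x_{kl}(\alpha)=\epsilon_{ij}$. If $j=l$, we use the hypothesis $k\neq l$ to observe that the indices $(i,j)$ and $(s,l)$ with $t=k$ cannot coincide with $(i,j)=(s,t)$; consequently no cancellation occurs when one reassembles the sum, and the two contributions give $\epsilon_{ij}-\alpha\epsilon_{ik}$.

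There is no real obstacle here beyond index bookkeeping; the only point that requires care is the observation that when $j=l$ and $t=k$, the case $(i,j)=(s,t)$ is automatically disjoint from the case $(i,j)=(s,l)$ because $k\neq l$, so the coefficients $1$ and $-\alpha$ appear on distinct basis vectors $\epsilon_{ij}$ and $\epsilon_{ik}$ rather than collapsing. Once this is noted, the two displayed formulas follow immediately from the expression for $\lambda_{st}$.
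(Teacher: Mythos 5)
Your proposal is correct and follows essentially the same route as the paper: both expand $\epsilon_{ij}.x_{kl}(\alpha)$ in the dual basis, compute the coefficient $\lambda_{st}$ by evaluating at $e_{st}$ via $\lambda_{st}=\epsilon_{ij}\bigl(e_{st}\,x_{kl}(-\alpha)\bigr)=\epsilon_{ij}(e_{st}-\alpha\delta_{tk}e_{sl})$, and then use $k\neq l$ to keep the two surviving contributions on distinct basis vectors. The only cosmetic difference is that you package the result as a single Kronecker-delta formula before the case split, while the paper splits on $t=k$ first; the content is identical.
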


Next we shall inspect the action of $h_k(\alpha)$ on $M_n(q)^*$, where  $1\leq k \leq n$ and $\alpha \in \F_q^*$. Writing $h_k(\alpha)=E+(\alpha -1)e_{kk}$ and  $\epsilon_{ij}.h_k(\alpha)=\sum_{1\leq s,t \leq n}\lambda_{st}\epsilon_{st}$ , we obtain:
\begin{eqnarray*}
\lambda_{st}&=&\big(\epsilon_{ij}.h_k(\alpha)\big)(e_{st})=\epsilon_{ij}\big(e_{st}h_k^{-1}(\alpha)\big)=\epsilon_{ij}\big(e_{st}h_k (\alpha^{-1})\big)\\
&=&\epsilon_{ij}\big(e_{st}(E+(\alpha^{-1}-1)e_{kk})\big)=\epsilon_{ij}\big(e_{st}+(\alpha^{-1}-1)\delta_{tk}e_{sk}\big)\\&=&
\begin{cases}
\epsilon_{ij}(e_{st}) & \text{ for } t\neq k\\
\epsilon_{ij}(\alpha^{-1}e_{st}) & \text{ for } t=k
\end{cases}\\&=&
\begin{cases}
1 & \text{ for } (i,j)=(s,t) \text{ and } t\neq k\\
\alpha^{-1 } & \text{ for } (i,j)=(s,t) \text{ and } t= k\\
0 & \text{ else.}
\end{cases}
\end{eqnarray*}
Therefore we have :
\begin{Lemma}\label{3.3}
Let $1\leq i,j,k\leq n$ and $\alpha\in \F_q^*$. Then
\[
\epsilon_{ij}.h_k(\alpha)=\begin{cases}
\epsilon_{ij} & \text{ for } j\neq k\\ \alpha^{-1 }   \epsilon_{ij}&   \text{ for } j= k
\end{cases}
\]
\hfill$\square$
\end{Lemma}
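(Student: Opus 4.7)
The plan is to proceed by direct coefficient computation, exactly in parallel with the treatment of $x_{kl}(\alpha)$ that led to Lemma \ref{3.2}. First, I would unwind the definition of the dual right action: writing $\epsilon_{ij}.h_k(\alpha) = \sum_{1 \le s,t \le n} \lambda_{st}\epsilon_{st}$, the coefficients are recovered by pairing with the dual basis,
\[
\lambda_{st} = \bigl(\epsilon_{ij}.h_k(\alpha)\bigr)(e_{st}) = \epsilon_{ij}\bigl(e_{st}\cdot h_k(\alpha)^{-1}\bigr).
\]
Since $H_k \cong (\F_q^*,\cdot)$ via $h_k(\beta) \mapsto \beta$, one has $h_k(\alpha)^{-1} = h_k(\alpha^{-1}) = E + (\alpha^{-1}-1)e_{kk}$, so the problem reduces to matrix-unit arithmetic.

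Next I would compute $e_{st}\cdot h_k(\alpha^{-1})$ using $e_{st}e_{kk} = \delta_{tk}e_{sk}$: it equals $e_{st}$ when $t \ne k$, and equals $\alpha^{-1}e_{sk}$ when $t = k$ (since the two terms in $e_{sk} + (\alpha^{-1}-1)e_{sk}$ collapse). Applying $\epsilon_{ij}$ then gives $\lambda_{st} = \delta_{is}\delta_{jt}$ when $t \ne k$ and $\lambda_{st} = \alpha^{-1}\delta_{is}\delta_{jk}$ when $t = k$.

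Finally I would split on the two cases of the lemma. If $j \ne k$, only the first clause can contribute, forcing $(s,t) = (i,j)$ with $\lambda_{ij} = 1$, hence $\epsilon_{ij}.h_k(\alpha) = \epsilon_{ij}$. If $j = k$, the first clause cannot contribute (it would require $t = j = k$, contradicting $t \ne k$), and the second clause forces $s = i$, $t = k = j$ with $\lambda_{ij} = \alpha^{-1}$, giving $\epsilon_{ij}.h_k(\alpha) = \alpha^{-1}\epsilon_{ij}$. There is no real obstacle here; the only point requiring mild care is keeping track of the inversion $\alpha \mapsto \alpha^{-1}$ introduced by the dual action and the case distinction $t = k$ versus $t \ne k$ before identifying which basis element $\epsilon_{st}$ actually appears.
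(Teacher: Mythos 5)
Your proof is correct and follows essentially the same approach as the paper: unwind the dual action via $(\epsilon_{ij}.h_k(\alpha))(e_{st}) = \epsilon_{ij}(e_{st}h_k(\alpha)^{-1})$, use $h_k(\alpha)^{-1}=h_k(\alpha^{-1})$, expand $e_{st}(E+(\alpha^{-1}-1)e_{kk})$ with $e_{st}e_{kk}=\delta_{tk}e_{sk}$, and read off the coefficients by the case split $t\neq k$ versus $t=k$.
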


Combining \ref{3.2} and \ref{3.3}, we easily obtain the following corollary:
\begin{Cor}
Let $A\in M_n(q), g\in GL_n(q)$. Then
$
A^*.g=\big(A(g^{-t})\big)^*
$,
where $g^{-t}=(g^{-1})^t$ denotes the transposed matrix of the inverse matrix $g^{-1}$ of $g$. \hfill$\square$
\end{Cor}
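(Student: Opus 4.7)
The plan is to prove the identity by a direct coefficient computation, reducing to the matrix-unit case by $\F_q$-linearity and then matching the $(s,t)$-coefficients of both sides. Since $A\mapsto A^*$ is $\F_q$-linear in $A$ and the induced right action of $GL_n(q)$ on $M_n(q)^*$ is $\F_q$-linear, it is enough to verify the formula for $A=e_{ij}$, in which case $A^*=\epsilon_{ij}$ and the claim reduces to
\[
\epsilon_{ij}.g \;=\; \bigl(e_{ij}\,g^{-t}\bigr)^{\!*} \qquad \text{for all } g\in GL_n(q).
\]

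I would then evaluate both sides at an arbitrary basis vector $e_{st}$. On the left, the definition of the dualised action gives $(\epsilon_{ij}.g)(e_{st})=\epsilon_{ij}(e_{st}g^{-1})$; expanding $e_{st}g^{-1}=\sum_r (g^{-1})_{tr}e_{sr}$ and applying $\epsilon_{ij}$ produces $\delta_{is}(g^{-1})_{tj}$. On the right, the coefficient of $\epsilon_{st}$ in $(e_{ij}g^{-t})^{*}$ is by definition the $(s,t)$-entry of $e_{ij}g^{-t}$, which equals $\delta_{is}(g^{-t})_{jt}=\delta_{is}(g^{-1})_{tj}$ because $(g^{-t})_{jt}=(g^{-1})_{tj}$. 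The two expressions agree, so the formula holds on matrix units, and by linearity, for every $A$.

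An alternative route, suggested by the preceding lemmas, is to check the identity only on the Bruhat-style generators $x_{kl}(\alpha)$ and $h_k(\alpha)$ of $GL_n(q)$. For $g=x_{kl}(\alpha)$ one has $g^{-t}=E-\alpha e_{lk}$, so $e_{ij}g^{-t}=e_{ij}-\alpha\delta_{jl}e_{ik}$, which matches Lemma \ref{3.2} after passing to duals; the analogous check for $h_k(\alpha)$ is immediate from Lemma \ref{3.3}. One then propagates to an arbitrary $g$ using the multiplicativity $(A^*.g).h=A^*.(gh)$ combined with the identity $(gh)^{-t}=g^{-t}h^{-t}$. Either route is very short, and the only genuine obstacle is bookkeeping with indices --- in particular, remembering the transpose swap $(g^{-t})_{jt}=(g^{-1})_{tj}$, which is precisely the mechanism by which an inverse-transpose appears in the final formula. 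There is no substantive difficulty beyond unwinding the definitions of $A^*$ and of the dualised action.
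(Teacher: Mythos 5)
Your proposal is correct, and in fact covers both of the paper's arguments. Your second route — verifying on the generators $x_{kl}(\alpha)$ and $h_k(\alpha)$ via Lemmas \ref{3.2} and \ref{3.3} and then propagating by $(gh)^{-t}=g^{-t}h^{-t}$ — is exactly the paper's stated proof ("Combining \ref{3.2} and \ref{3.3}, we easily obtain the following corollary"). Your first route, reducing to matrix units and matching $(s,t)$-coefficients, is a coordinate-level sibling of the paper's remark that one can prove the identity directly via $A^*(B)=\operatorname{tr}(A^tB)$: the trace identity packages into one line the bookkeeping you perform explicitly, namely $(\epsilon_{ij}.g)(e_{st})=\delta_{is}(g^{-1})_{tj}=(e_{ij}g^{-t})_{st}$. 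The trace version is cleaner and avoids choosing a basis, whereas your coefficient computation is more elementary and makes the appearance of the inverse-transpose visually transparent; both are valid, and neither is missing anything.
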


Note that the corollary above can also be proved  directly by observing $A^*(B)=tr(A^tB)$ for $A,B\in M_n(q)$ and hence for $g\in GL_n(q)$: $
A^*.g(B)=A^*(B.g^{-1})=tr(A^tBg^{-1})=tr(g^{-1}A^tB)=(Ag^{-t})^*(B)
$. We present lemmas \ref{3.2} and \ref{3.3} since they explicitly describe the action of the generators of $GL_n(q)$ on the $\F_q$-basis of $M_n(q)^*$.

\medskip

A subset $J$ of $\Phi$ is called \textbf{closed}, if $(i,j), (j,k)\in J$ and $(i,k)\in \Phi$ implies $(i,k)\in J$. Note that if $J\subseteq \Phi^+\, (\Phi^-)$ the assumption $(i,k)\in \Phi$ above is satisfied automatically and hence $J\subseteq \Phi^+\,(\Phi^-)$ is closed if and only if $(i,j), (j,k)\in J$ implies $(i,k)\in J$.

Define $U=U_n(q), (U^-=U_n^-(q))$ to be the subgroups of  upper (lower) unitriangular $n\times n$-matrices. Then $U,\,(U^-)$  are $p$-Sylow subgroups of $GL_n(q)$. Moreover $V=\{u-E\,|\,u\in U\}$ is the Lie  algebra of $U$, where $E$ denotes the identity matrix $E=\sum_{i=1}^n e_{ii}$. 

It is well known that for a closed subset $J$ of $\Phi^+$   the set of matrices $A\in U^+$ with $\supp(A-E)\subseteq J$  is a subgroup of $U^+$  called  \textbf{pattern subgroup} and denoted by $U_J$. Moreover the associated Lie algebra $V_J=\Lie(U_J)$ is given as 
\[
V{_J}=\{A\in M_n(q)\,|\,\supp(A)\subseteq J\}=\{u-E\,|\,u\in U_J\}.
\] 

It is well known (see e.g. [\cite{carter}]) fact that for $J\subseteq \Phi^+$ closed we may fix an arbitrary linear ordering of $J$ and write every element $u$ of $U_J$ uniquely as
\begin{equation}\label{3.1}
u=\prod_{(i,j)\in J} X_{ij}(\alpha_{ij})
\end{equation}
for $\alpha_{ij}\in \F_q$, where the product is taken in the fixed given ordering. In particular $|U_J|=q^{|J|}$.

\begin{Remark}\label{3.6}
Note that multiplying $A\in M_n(q)$ from the right by $x_{ij}(\alpha)$ means adding $\alpha$ times column $i$ of $A$ to column $j$. Similarly multiplying $A$ from the left by $x_{ij}(\alpha)$ means adding $\alpha$ times row $j$ to row $i$ in $A$. So by (\ref{3.1}) (with $J=\Phi^+$) multiplying matrices in $M_n(q)$ from the right (left) by elements of $U$ can be performed by a sequence of elementary column (row)  operations from left to right (bottom to top respectively).
\end{Remark}

\begin{Cor}\label{3.7}
Let $J\subseteq \Phi^+$ closed. Then $U_J$ acts on $V_J$ by matrix multiplication from the left and the right. Moreover, for $A\in V_J, u\in U_J$ we have
\[
A^*.u=\big(\pi_{_J}(Au^{-t})\big)^*\in V^*,
\]
where $\pi_{_J}:M_n(q)\rightarrow V_J: B\mapsto \sum_{(i,j)\in J}B_{ij}e_{ij}$ is the natural projection. \hfill$\square$
\end{Cor}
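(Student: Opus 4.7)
The plan is to split the statement into two claims: (i) $V_J$ is stable under left and right multiplication by $U_J$, and (ii) the identity $A^*.u=\bigl(\pi_{_J}(Au^{-t})\bigr)^*_J$ holds inside $V_J^*$.

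For (i) I would write $u=E+N$ with $N\in V_J$, so that $Au=A+AN$ and $uA=A+NA$. A typical entry $(AN)_{ij}=\sum_k A_{ik}N_{kj}$ has a nonzero summand only when $(i,k)\in J$ and $(k,j)\in J$. Because $J\subseteq\Phi^+$, these two membership conditions force $i<k<j$, so in particular $i\neq j$, and then the closedness hypothesis delivers $(i,j)\in J$. Hence $\supp(AN)\subseteq J$ and $Au\in V_J$; the argument for $uA$ is symmetric. With $V_J$ now a two-sided $U_J$-set under matrix multiplication, the contragredient right action of $U_J$ on $V_J^*$, given by $(\tau.u)(v)=\tau(vu^{-1})$, is well defined.

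For (ii) the idea is to restrict the preceding corollary $A^*.g=(Ag^{-t})^*$, which is valid in $M_n(q)^*$ for $g\in GL_n(q)$, down to $V_J$. Let $\rho\colon M_n(q)^*\to V_J^*$ denote restriction of linear functionals. The earlier lemma identifying the restriction of a dual element gives $\rho(B^*)=\bigl(\pi_{_J}(B)\bigr)^*_J$ for every $B\in M_n(q)$. Using (i), which guarantees $vu^{-1}\in V_J$ whenever $v\in V_J$, I verify that $\rho$ intertwines the two $U_J$-actions:
\[
\rho(\tau.u)(v)=\tau(vu^{-1})=\rho(\tau)(vu^{-1})=\bigl(\rho(\tau).u\bigr)(v).
\]
Applying $\rho$ to both sides of $A^*.u=(Au^{-t})^*$ and noting that $\rho(A^*)=A^*_J$ (since $\pi_{_J}(A)=A$ for $A\in V_J$) produces exactly $A^*_J.u=\bigl(\pi_{_J}(Au^{-t})\bigr)^*_J$, as claimed.

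The only real obstacle is part (i), and it is mild: one must invoke the closedness of $J$ precisely once, and the hypothesis $J\subseteq\Phi^+$ conveniently rules out the diagonal case that closedness itself does not address. Once (i) is in hand, part (ii) is a purely formal restriction argument from earlier results and requires no further case analysis.
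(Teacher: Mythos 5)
Your proof is correct and follows the route the paper implicitly intends: it leaves the corollary unproved precisely because the two ingredients you supply (stability of $V_J$ under multiplication by $U_J$, which is where closedness of $J$ enters, and the formal restriction of the already-established $GL_n(q)$ formula $A^*.g=(Ag^{-t})^*$ through the projection lemma) are considered standard. Your verification that the restriction map $\rho$ intertwines the two contragredient actions — which hinges on $vu^{-1}\in V_J$, i.e.\ on part (i) — is exactly the detail one needs to make the restriction argument rigorous, so nothing is missing.
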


In particular, for $(i,j)\in J\subseteq \Phi^+$ closed and $\alpha\in \F_q$ we have
\[
A^*.x_{ij}(\alpha)=\big(\pi_{_J}(A x_{ji}(-\alpha))\big)^*, \text{ for }A\in V_J.
\]
Recall from \ref{3.6} that $A  x_{ji}(-\alpha)$ adds $-\alpha$ times column $j$ of $A$ to column $i$. Applying $\pi_{_J}$ to the resulting matrix $B=Ax_{ij}(-\alpha)$ sets $B_{st}=0$ for $1\leq s,t \in n$ with $(s,t)\not\in J$. Thus for $J=\Phi^+$,
 $x_{ij}(\alpha)$ acts on $A^*$ as follows: 
  \begin{equation}\label{3.8}
A^*.x_{ij}(\alpha) = 
    \raisebox{-2cm}{
      \begin{tikzpicture}
        \fill[black,opacity=0.3] (0.8,4)--(1.2,4)--(1.2,1.2)--(0.8,1.2)--cycle;
        \fill[white] (0.8,3.2)--(1.2,3.2)--(1.2,2.8)--(0.8,2.8)--cycle;
        \fill[black,opacity=0.4] (0.8,2.8)--(1.2,2.8)--(1.2,1.2)--(0.8,1.2)--cycle;
        \draw[black,thick] (0.8,4)--(1.2,4)--(1.2,1.2)--(0.8,1.2)--cycle;
        \fill[black,opacity=0.3] (2.8,4)--(3.2,4)--(3.2,0.8)--(2.8,1.2)--cycle;
        \draw[black,thick] (2.8,4)--(3.2,4)--(3.2,0.8)--(2.8,1.2)--cycle;
        \draw[black](1,2)--(1.5,1) node[below] {set to zero};
        \draw[black](3.7,3)--(4.4,2.7) node[right] {$A^*$};
        \draw (4,0)--(4,4)--(0,4)--cycle;
        %
        \draw(1,3) node{$i$};
        \draw(3,1) node{$j$};
        \draw[->](3,4.1)--(3,4.3)--(1,4.3)--(1,4.1);
        \draw (2,4.5) node {$-\alpha$ times};
      \end{tikzpicture}
    }
  \end{equation}
 We indicate here the dual $A^*$ of the upper-triangle nilpotent matrix $A$ as triangle omitting superfluous zeros. If $J$ is a closed subset in $  \Phi^+$, $(i,j)\in J$ for the action of $X_{ij}$ on $A^*$ all  positions not in $J$ should be set to zero in (\ref{3.8}) as well. 

The application of $\pi_{_J}$ corresponds to restricting $\epsilon_{ij}\in M_n(q)^*$ to $\epsilon_{ij}^J\in V_J^*$ for $(i,j)\in J$. In previous papers we called this ``truncation'', but now, following Yan [\cite{yan}], we call it restriction and the action described in illustration \ref{3.8} ``restricted column operation''. 

The left action of $GL_n(q)$ on $M_n(q)^*$ derived from left multiplication similarly can be stated as:
\begin{equation}\label{3.9}
g.A^*=(g^{-t}A)^*
\end{equation}
and hence for $J\subseteq \Phi^+, (i,j)\in J, \alpha\in \F_q$ we can describe 
$x_{ij}(\alpha).A^*=\big(\pi_{_J}(x_{ji}(-\alpha)A)\big)^*$, that is by the restricted row operation adding $-\alpha$ times row $i$ onto row $j$ and project the resulting matrix $B$ to $V_J$ to obtain $x_{ij}(\alpha).A^*=B^*\in V_J^*$:
\begin{equation}\label{3.10}
x_{ij}(\alpha).A^* = 
    \raisebox{-2cm}{
      \begin{tikzpicture}
        \fill[opacity=0.3] (4,0.8)--(4,1.2)--(1.2,1.2)--(1.2,0.8)--cycle;
        \fill[white] (3.2,0.8)--(3.2,1.2)--(2.8,1.2)--(2.8,0.8)--cycle;
        \fill[black,opacity=0.4] (2.8,0.8)--(2.8,1.2)--(1.2,1.2)--(1.2,0.8)--cycle;
        \draw[thick]  (4,0.8)--(4,1.2)--(1.2,1.2)--(1.2,0.8)--cycle;
        %
        \draw[thick] (4,2.8)--(4,3.2)--(0.8,3.2)--(1.2,2.8)--cycle;
        \fill[opacity=0.3]  (4,2.8)--(4,3.2)--(0.8,3.2)--(1.2,2.8)--cycle;
        \draw[black](1.5,0.9)--(1.7,0.3) node[below] {set to zero};
        \draw[black](3.7,3.6)--(4.4,4.1) node[right] {$A^*$};
        \draw (4,0)--(4,4)--(0,4)--cycle;
        %
        \draw(1,3) node{$i$};
        \draw(3,1) node{$j$};
        \draw[->](4.1,3)--(4.3,3)--(4.3,1)--(4.1,1);
        \draw(5.5,2) node {$-\alpha$ times};
      \end{tikzpicture}
    }
\end{equation}
Positions in the triangle not in $J$ are set to be zero as well.

Let $J\subseteq \Phi^+$ be closed.  Since $\{\epsilon_{ij}\,|\,(i,j)\in J\}$ is a basis of $V_J^*$, we have $V_J^*=\{A^*\,|\, A\in V_J\}$. Now $U_J$ acts on $V_J^*$ from left and right by multiplication (denoted here as ``.'') and hence on $V_J^*$ as well. By  [\cite{super}, Lemma 4.1] the numbers of $U_J$-orbits (left, right, bi) on $V_J$ and $V_J^*$ coincide.

\begin{Remark}\label{lide}
We choose once for all a non trivial character $\theta:(\F_q,+)\longrightarrow \C^{*}$. Obviously, for  $A\in V_J$, $\theta$ composed with the $\F_q$-linear map $A^*: V_J\rightarrow \F_q$ is a linear character of $V_J$, denoted by $[A]$. Obviously 
$\hat V_J=\{[A]\,|\,A\in V_J\}$. Note that under the identification $\C^{V_J} \rightarrow \C(V_J,+):\tau\mapsto \sum_{v\in V_J} \tau(v)v$ the linear character $[A]\in \hat{V}_J$ for $A\in V_J$ is mapped to $q^{|J|}$ times the idempotent of $\C V_J$, associated with the complex conjugate character $\bar{[A]}$ which is obviously $[-A]$, since $V_J$ is written additively. In [\cite{embed}] we called $e_{_A} = q^{-|J|}[-A]\in \C V_J$ hence \textbf{lidempotent}, to distinguish those from idempotents of the group algebra $\C U_J$ and since it is really an idempotent for the additive group of the Lie algebra $\text{Lie}(U_J) = V_J$. Moreover, for simplicity 
 we call the linear characters $[A]$  in $\V$  ``lidempotents" as well, although $[A]$  is the multiple $q^{|J|}e_{_{-A}}$ of the idempotent $e_{_{-A}}$ in the group algebra  $\C^V  = \C V$.
\end{Remark}

\begin{Prop}\label{3.11}
Let $J\subseteq \Phi^+$ be closed, $A\in V_J, u\in U_J$. 
Then extending as in section 2 the action of $U_J$ on $V_J$ to $\C V_J$,  this action from the left and the right  satisfies $[A].u=\theta\circ(A^*.u)$ and $u.[A]=\theta\circ(u.A^*)$ and hence 
$[A].u=[\pi_{_J}(Au^{-t})]$ and $u.[A]=[\pi_{_J}(u^{-t}A)]$. 
\end{Prop}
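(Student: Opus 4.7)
The proof is essentially an unpacking of definitions, chaining together: the definition of the lidempotent $[A]=\theta\circ A^*$, the way the right action of $U_J$ on $V_J$ extends to $\C V_J\cong \C^{V_J}$, and Corollary \ref{3.7} (and its left-handed analogue \eqref{3.9}). The plan proceeds in two symmetric halves, one for each side.

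For the right action, I first write down, for $v\in V_J$ and $u\in U_J$, the defining formula of the extended action on functions, namely $(\tau.u)(v)=\tau(v.u^{-1})$. Applied to $\tau=[A]=\theta\circ A^*$, this immediately gives
\[
\bigl([A].u\bigr)(v)=[A](v.u^{-1})=\theta\bigl(A^*(v.u^{-1})\bigr)=\theta\bigl((A^*.u)(v)\bigr),
\]
which is exactly the asserted identity $[A].u=\theta\circ(A^*.u)$. Then I substitute $A^*.u=\bigl(\pi_{_J}(Au^{-t})\bigr)^*$, which is the content of Corollary \ref{3.7}, and conclude that $[A].u=\theta\circ\bigl(\pi_{_J}(Au^{-t})\bigr)^*=[\pi_{_J}(Au^{-t})]$.

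The left hand side is handled symmetrically. The left action on $\C^{V_J}$ is given by $(u.\tau)(v)=\tau(u^{-1}.v)$, so by the same calculation
\[
\bigl(u.[A]\bigr)(v)=[A](u^{-1}.v)=\theta\bigl(A^*(u^{-1}.v)\bigr)=\theta\bigl((u.A^*)(v)\bigr),
\]
yielding $u.[A]=\theta\circ(u.A^*)$. Using the left hand analogue \eqref{3.9}, i.e. $u.A^*=\bigl(\pi_{_J}(u^{-t}A)\bigr)^*$ (together with the fact that $J$ is closed so the projection $\pi_{_J}$ yields an element of $V_J^*$), I obtain $u.[A]=[\pi_{_J}(u^{-t}A)]$.

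The only point that requires care is the $g$ versus $g^{-1}$ convention: since the action on functions in \S\ref{sec1} was defined by $(\tau.g)(v)=\tau(v.g^{-1})$, the inverse $u^{-1}$ arises on $v$, which is then absorbed into the dual action $A^*.u$ by Corollary \ref{3.7}. There is no genuine obstacle; the statement is a formal consequence of the definitions once the pointwise formula for the extended action is written out and the results of Section 3 on the dual action are invoked.
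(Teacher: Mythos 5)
Your proof is correct and takes essentially the same approach as the paper: both unpack the pointwise definition $([A].u)(v)=[A](v.u^{-1})=\theta\bigl(A^*(v.u^{-1})\bigr)=\theta\bigl((A^*.u)(v)\bigr)$ and then invoke Corollary \ref{3.7} (and its left-sided analogue) to rewrite $A^*.u$ as $\bigl(\pi_{_J}(Au^{-t})\bigr)^*$. The paper only spells out the right action and says ``similarly'' for the left, while you write both halves explicitly, but the argument is identical.
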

\begin{proof}Let $B \in V_J$. Then
$
([A].u)(B)=([A])(Bu^{-1})=(\theta\circ A^*)(Bu^{-1})=\theta\big(A^*(Bu^{-1})\big)=\theta\big((A^*.u)(B)\big).
$
By \ref{3.7} we have $A^*.u=\big(\pi_{_J}(Au^{-t})\big)^*$. Hence we obtain: 
\[
[A].u=\theta\circ(A^*.u)=\theta\circ\big(\pi_{_J}(Au^{-t})\big)^*=[\pi_{_J}(Au^{-t})].
\]
Similarly we get 
\[
u.[A]=\theta\circ(u.A^*)=\theta\circ\big(\pi_{_J}(u^{-t}A)\big)^*=[\pi_{_J}(u^{-t}A)].
\]
\end{proof}

\begin{Lemma}\label{3.12}Let $J\subseteq \Phi^+$ be  closed. Then the map $f: U_J\rightarrow V_J: u\mapsto u-E$ where $u\in U_J$, is a left, right and bijective 1-cocycle. In particular $f^*: \C^{V_J}\rightarrow \C^{U_J}$ is bijective.
\end{Lemma}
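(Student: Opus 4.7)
The proof is essentially a direct computation, so the plan is to verify the three properties (bijective, right cocycle, left cocycle) and invoke Lemma \ref{2.6} for the final statement about $f^*$.

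First I would dispose of bijectivity. Since $U_J = E + V_J$ as sets (an element of $U_J$ differs from the identity by a matrix supported on $J$, and conversely every $A \in V_J$ is nilpotent, so $E+A \in U_J$), the map $u \mapsto u-E$ is manifestly a bijection $U_J \to V_J$ with inverse $A \mapsto A+E$.

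Next I would verify the right cocycle identity. By Corollary \ref{3.7} we know that $V_J$ is stable under right multiplication by $U_J$ (this is where closedness of $J$ enters: if $A\in V_J$ and $u\in U_J$, then writing $Au = A + A(u-E)$, the product $A(u-E)$ has entries supported inside $J$ by the closedness condition, so no restriction $\pi_{_J}$ is needed). Hence for $u,g \in U_J$ we can compute
\[
f(u).g + f(g) = (u-E)g + (g - E) = ug - g + g - E = ug - E = f(ug),
\]
which is the right 1-cocycle condition \eqref{2.3}. The left cocycle identity follows by the symmetric calculation
\[
g.f(u) + f(g) = g(u-E) + (g-E) = gu - E = f(gu).
\]

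Finally, Lemma \ref{2.6} states that $f^*$ is bijective exactly when $f$ is, so the concluding sentence follows immediately. The only conceptual point worth flagging is that the verification of the cocycle identities uses not just that $U_J$ acts on $V_J$ but that the action is genuine matrix multiplication (so $(u-E)g$ literally makes sense in $V_J$ without projection); this is the content of Corollary \ref{3.7} and is the place where the hypothesis that $J$ is closed is used.
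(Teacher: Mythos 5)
Your proof is correct and follows essentially the same route as the paper: both verify the two cocycle identities by the same direct algebraic manipulation $xg-E=(x-E)g+(g-E)$ and conclude via Lemma~\ref{2.6}. The only difference is that you spell out explicitly why $(u-E)g$ lands in $V_J$ (closedness of $J$), a point the paper leaves implicit; this is a reasonable elaboration, not a deviation.
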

\begin{proof}
Let $x,g \in U_J$. Then $f(xg)=xg-E=(x-E)g+(g-E)=f(x)g+f(g)$ and  $f(gx)=gx-E=g(x-E)+(g-E)=gf(x)+f(g)$ , hence $f$ is a left and right 1-cocycle. In addition, it is obviously bijective and hence $f^*$ is also bijective by Lemma \ref{2.6}.
\end{proof}

Using  Jedlitschky's theorem \ref{2.4} 
 we derive an action of $\C U_J$ on $\C V_J$ such that $U_J$ acts on $\hat V_J$ monomially. More precisely,
\begin{Cor}\label{3.13}
 For $(i,j)\in J\subseteq \Phi^+$ and $A^*\in \hat V_J$, we have
\begin{itemize}
\item [(1)] $[A] x_{ij}(\alpha) =\theta(-\alpha A_{ij})[B]$ where $\alpha\in \F_q^*$ and $B$ is obtained from $A$ by adding $-\alpha$ times column $j$ to column $i$ and then setting all the positions not in $J$ back to zero. We call this ``{\bf restricted column operation}'' from left to right.
\item [(2)]  $ x_{ij}(\beta) [A]=\theta(-\beta A_{ij})[B]$ where $\beta\in \F_q^*$ and $B$ is obtained from $A$ by adding $-\alpha$ times row $i$ to row $j$ and then setting all the positions not in $J$ back to zero. We call this ``{\bf restricted row operation}'' from bottom up.
\end{itemize}
\end{Cor}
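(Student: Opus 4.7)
The plan is to combine three earlier ingredients: Jedlitschky's formula in Theorem \ref{2.4} for the monomial action, the fact from Lemma \ref{3.12} that $f\colon U_J\to V_J$, $u\mapsto u-E$, is a bijective two-sided $1$-cocycle, and the description of the linearised $U_J$-action on $\hat V_J$ recorded in Proposition \ref{3.11}. With these in hand, parts (1) and (2) amount to unwinding the defining formula $\chi g=\chi(f(g^{-1}))\,\chi.g$ of (\ref{monomial action}) in the specific case $g=x_{ij}(\alpha)$, $\chi=[A]$.

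For (1), first observe that $x_{ij}(\alpha)^{-1}=x_{ij}(-\alpha)$, so Lemma \ref{3.12} yields $f(x_{ij}(\alpha)^{-1})=-\alpha\,e_{ij}$. Using $[A]=\theta\circ A^{*}$ together with $A^{*}(e_{ij})=A_{ij}$, the scalar coefficient in (\ref{monomial action}) becomes $[A](-\alpha e_{ij})=\theta(-\alpha A_{ij})$. Next, Proposition \ref{3.11} combined with the elementary identity $x_{ij}(\alpha)^{-t}=x_{ji}(-\alpha)$ gives $[A].x_{ij}(\alpha)=[\pi_{_J}(A\,x_{ji}(-\alpha))]$. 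Finally, by Remark \ref{3.6} right multiplication by $x_{ji}(-\alpha)$ adds $-\alpha$ times column $j$ to column $i$ of $A$, and $\pi_{_J}$ then sets all entries at positions outside $J$ back to zero; this is precisely the matrix $B$ appearing in the statement. Assembling the pieces yields $[A]x_{ij}(\alpha)=\theta(-\alpha A_{ij})[B]$.

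Part (2) is completely analogous, using the left-hand-sided analogue of Theorem \ref{2.4} recorded at the end of Section \ref{sec1}; this is available because the cocycle $f$ of Lemma \ref{3.12} is simultaneously a left $1$-cocycle. The scalar factor $\theta(-\beta A_{ij})$ is extracted from $f(x_{ij}(-\beta))=-\beta e_{ij}$ in exactly the same way, while the permutation part now reads $x_{ij}(\beta).[A]=[\pi_{_J}(x_{ji}(-\beta)\,A)]$ by the second half of Proposition \ref{3.11}. Remark \ref{3.6} identifies left multiplication by $x_{ji}(-\beta)$ with adding $-\beta$ times row $i$ to row $j$ of $A$, after which $\pi_{_J}$ again restricts to positions in $J$.

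No step is a serious obstacle; the argument is essentially careful bookkeeping of signs, inverses and transposes. The one point worth isolating is the identity $x_{ij}(\alpha)^{-t}=x_{ji}(-\alpha)$, which is responsible for the interchange of row and column indices when the action on $V_J$ is dualised to $V_J^{*}$ (and hence transported to $\hat V_J$) via Corollary \ref{3.7}; this is exactly what forces the orientation of the restricted column and row operations pictured in (\ref{3.8}) and (\ref{3.10}), and it is the only place where one could easily reverse a sign by accident.
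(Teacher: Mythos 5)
Your argument is correct and is exactly the unwinding the paper intends: it leaves \ref{3.13} unproved, merely pointing to \ref{2.4}, \ref{3.11} and \ref{3.12}, and your proof fills in precisely those details — extracting the scalar $\theta(-\alpha A_{ij})$ from $f(x_{ij}(\alpha)^{-1})=-\alpha e_{ij}$, and identifying the permutation part via $x_{ij}(\alpha)^{-t}=x_{ji}(-\alpha)$ and Remark \ref{3.6}. You also implicitly correct the paper's typo in part (2), where ``$-\alpha$ times row $i$'' should read ``$-\beta$ times row $i$''.
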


\begin{Defn}\label{orbits}
Let $J\subseteq \Phi^+$ be closed. For $A\in V_J$, we set $$\cO_A^l=U_{J}.[A]\,, \quad  \cO_A^r=[A].U_{J}\,,\quad \text { and } \quad \cO_A^{\bi}=U_{J}.[A].U_{J}.$$ Thus these are $U_J$-orbits given by $[A] $ under the permutation action of $U_J$ on $\hat V_J$.
However when we consider the $\C$-span of these orbits, denoted by  $\C \cO_A^r, \C \cO_A^l $ and $\C \cO_A^{\bi}$ respectively, we shall,  if not stated otherwise, consider them as monomial $\C U_J$-modules given in \ref{2.4} and  \ref{3.13}.
\end{Defn}

\begin{Remark}\label{HomSpace}
Note that $f^*(\C \cO_A^l)=\C U_J f^*([A])$ is a left ideal of $\C U_J$,  $f^*(\C \cO_A^r)= f^*([A])\C U_J$ is a  right ideal of $\C U_J$ and $f^*(\C \cO_A^{\bi})=\C U_J f^*([A])\C U_J$ is an ideal of $\C U_J$. For $ A, B\in V_J$, we have:
\begin{eqnarray*}
\Hom_{\C U_J}\big(\C \cO_A^r,\C \cO_B^r\big) & \cong & \Hom_{\C U_J}\big(f^*([A])\C U_J, f^*([B])\C U_J\big) \\ 
 & \cong & \C U_J f^*([A]) \cap f^*([B])\C U_J\\
 & \cong\ & \C  \cO_A^l \cap \C\cO_B^r \\
 & \cong\ & \C  (\cO_A^l \cap \cO_B^r)\\
& \cong &\Hom_{\C U_J}\big(\C \cO_B^l,\C \cO_A^l\big) , 
\end{eqnarray*}  
since $\C U_J$ is a self-injective algebra.
More precisely, if $[C]\in  \cO_A^l \cap \cO_B^r  $ then there exist $\lambda,\mu \in\C^*$ and  $x,y\in U_J$ such  that $[C]=\lambda x[A] = \mu [B]y$. Then the $\C U_J$-homomorphism from $\C \cO_A^r$ to $\C \cO_B^r$ corresponding to $[C]$ is given by left multiplication by $\lambda x$. Since  $\cO_C^r = \cO_B^r$,  this is an epimorphism, and it is injective, since $\lambda x$ is invertible. Similarly right multiplication by $\mu y$ is a $\C U_J$-homomorphism depending on $[C]$ alone from $\C \cO_B^l$ to $\C \cO_A^l$. In particular we see that $\End_{\C U_J}(\C  \cO_A^r) \cong  \End_{\C U_J}(\C  \cO_A^l)$ as algebras.
\end{Remark}

For an arbitrary finite group $G$ we say that two $\C G$-modules are \textbf{disjoint}, if they have no irreducible constituent in common, or equivalently, if their characters are orthogonal.

We have shown:

\begin{Lemma}\label{equalororth}
Let $J\subseteq \Phi^+$ be closed and $A, B\in V_J$. Then either $\C \cO_A^r\cong \C \cO_B^r$ and $\C \cO_A^l\cong \C \cO_B^l$ or  $\C \cO_A^r, \C \cO_A^r$ (respectively $\C \cO_A^l, \C \cO_A^l$) are disjoint, that is the characters afforded by $\C \cO_A^r$ and $ \C \cO_A^r$ (respectively by $\C \cO_A^l$ and $ \C \cO_A^l$) are orthogonal.
\end{Lemma}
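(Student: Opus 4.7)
The plan is to extract the statement almost directly from Remark \ref{HomSpace}, which already does the heavy lifting by identifying the relevant Hom-spaces with a set-theoretic intersection of orbits. First I would observe that since $U_J$ is a finite group and we are working over $\C$, Maschke's theorem makes $\C U_J$ semisimple, so for any two $\C U_J$-modules $M,N$ the condition ``$M,N$ are disjoint'' is equivalent to $\Hom_{\C U_J}(M,N)=0$. Thus the dichotomy in the lemma reduces to: either the relevant Hom-space vanishes, or it contains an isomorphism.

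Next I would split into cases according to whether $\cO_A^l\cap \cO_B^r=\emptyset$ or not, using the chain of isomorphisms
\[
\Hom_{\C U_J}(\C\cO_A^r,\C\cO_B^r)\;\cong\;\C(\cO_A^l\cap\cO_B^r)\;\cong\;\Hom_{\C U_J}(\C\cO_B^l,\C\cO_A^l)
\]
established in Remark \ref{HomSpace}. If the intersection is empty, both outer Hom-spaces vanish, so by semisimplicity $\C\cO_A^r$ and $\C\cO_B^r$ are disjoint, and likewise $\C\cO_A^l$ and $\C\cO_B^l$ are disjoint, i.e.\ their characters are orthogonal. This handles one half of the dichotomy, with both left and right statements arising from the same vanishing.

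If instead $\cO_A^l\cap\cO_B^r\neq\emptyset$, pick any lidempotent $[C]$ in the intersection. By the explicit construction in Remark \ref{HomSpace}, one has $[C]=\lambda x[A]=\mu[B]y$ for scalars $\lambda,\mu\in\C^{*}$ and group elements $x,y\in U_J$, and the corresponding $\C U_J$-map $\C\cO_A^r\to\C\cO_B^r$ is left multiplication by $\lambda x$. Since $x\in U_J$ is a unit in $\C U_J$, this map is injective; and because the orbit $\cO_C^r$ coincides with $\cO_B^r$ in $\hat V_J$, the image $\lambda x[A]\cdot\C U_J=[C]\cdot\C U_J$ spans $\C\cO_B^r$, giving surjectivity. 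Hence $\C\cO_A^r\cong\C\cO_B^r$. The symmetric construction, right multiplication by $\mu y$, yields $\C\cO_B^l\cong\C\cO_A^l$.

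I do not expect any genuine obstacle; the only point to watch is that one must distinguish between the permutation orbit $\cO_A^r\subseteq\hat V_J$ and its $\C$-span as a monomial module, so that when one writes $[C]=\lambda x[A]$ under the monomial action the scalar $\lambda$ (coming from Theorem \ref{2.4}) is absorbed into the fact that left multiplication by a \emph{unit} of $\C U_J$ is automatically a module isomorphism. Once this is flagged, the proof is essentially two lines invoking Remark \ref{HomSpace} together with semisimplicity of $\C U_J$.
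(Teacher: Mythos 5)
Your proposal is correct and follows essentially the same route as the paper: the paper's proof of Lemma \ref{equalororth} consists precisely of Remark \ref{HomSpace} (the chain of Hom-space isomorphisms terminating in $\C(\cO_A^l\cap\cO_B^r)$ together with the explicit map given by left multiplication by $\lambda x$), after which the lemma is introduced by ``We have shown''. Your only additions are making explicit the semisimplicity of $\C U_J$ to justify that vanishing Hom-spaces mean disjointness, and separating the two cases of the dichotomy, both of which the paper leaves implicit.
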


This implies in particular, that all orbit modules contained in a biorbit are isomorphic and orbit modules contained in different bimodules are disjoint. Since in addition the sum of all biorbit modules is the regular $\C U_J$-bimodule, we have:
\begin{Cor}\label{leftright}
Let $J\subseteq \Phi^+$ be closed and $A, B\in V_J$. Then 
$$\C \cO_A^r\cong \C \cO_B^r\Longleftrightarrow [B]\in \cO_A^{\bi}\Longleftrightarrow\C \cO_A^l\cong \C \cO_B^l.$$
In particular, the biorbit modules $\C \cO_A^{\bi}$ are  the sum of some Wedderburn components of $\C U_J$.
\end{Cor}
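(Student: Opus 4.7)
The plan is to deduce Corollary \ref{leftright} from the already-established \ref{HomSpace} and \ref{equalororth}, leaving only a short set-theoretic check. By \ref{equalororth}, for any $A,B\in V_J$ the orbit modules $\C\cO_A^r$ and $\C\cO_B^r$ are either isomorphic or disjoint, so $\C\cO_A^r\cong\C\cO_B^r$ is equivalent to $\Hom_{\C U_J}(\C\cO_A^r,\C\cO_B^r)\neq 0$. Combined with the identification
$$\Hom_{\C U_J}\big(\C\cO_A^r,\C\cO_B^r\big)\cong\C(\cO_A^l\cap\cO_B^r)$$
from \ref{HomSpace}, this rewrites as $\cO_A^l\cap\cO_B^r\neq\emptyset$. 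The symmetric identification $\Hom_{\C U_J}(\C\cO_A^r,\C\cO_B^r)\cong\Hom_{\C U_J}(\C\cO_B^l,\C\cO_A^l)$ in \ref{HomSpace}, combined again with \ref{equalororth} applied to left orbits, yields the same criterion $\cO_A^l\cap\cO_B^r\neq\emptyset$ for $\C\cO_B^l\cong\C\cO_A^l$. Both equivalences in the statement therefore reduce to the single claim $\cO_A^l\cap\cO_B^r\neq\emptyset\iff [B]\in\cO_A^{\bi}$.

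This residual claim is immediate from the definition $\cO_A^{\bi}=U_J.[A].U_J$. For the forward direction, pick $[C]\in\cO_A^l\cap\cO_B^r$; then under the underlying permutation action $[C]=u.[A]=[B].v$ for some $u,v\in U_J$, and rearranging gives $[B]=u.[A].v^{-1}\in U_J.[A].U_J=\cO_A^{\bi}$. Conversely, if $[B]=u.[A].v$ with $u,v\in U_J$, then $[C]:=u.[A]=[B].v^{-1}$ belongs to both $\cO_A^l$ and $\cO_B^r$.

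For the final sentence: by \ref{HomSpace}, $f^*(\C\cO_A^{\bi})=\C U_J\,f^*([A])\,\C U_J$ is a two-sided ideal of $\C U_J$. Since $\C U_J$ is the complex group algebra of a finite group, Maschke's theorem makes it semisimple, so every two-sided ideal is a direct sum of Wedderburn components. Transporting back along the bimodule isomorphism $f^*$ (bijective by \ref{3.12} and bimodule-compatible by the closing remarks of Section \ref{sec1}), $\C\cO_A^{\bi}$ itself is a direct sum of Wedderburn components of $\C U_J$.

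There is no serious obstacle here: the corollary is essentially bookkeeping on top of the substantive work done in \ref{HomSpace}. The one trap worth flagging is the distinction between the (scalar-free) permutation action that defines $\cO_A^r$, $\cO_A^l$, $\cO_A^{\bi}$ and the monomial action that makes their $\C$-spans into $\C U_J$-modules: the set-level argument for the reduction above concerns only permutation orbits, while the monomial scalars are quietly absorbed into the module-theoretic isomorphisms supplied by \ref{HomSpace}.
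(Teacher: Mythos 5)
Your proposal is correct and follows essentially the same route the paper intends: reduce everything to non-vanishing of $\Hom_{\C U_J}(\C\cO_A^r,\C\cO_B^r)$ via \ref{equalororth}, identify that Hom-space with $\C(\cO_A^l\cap\cO_B^r)$ via \ref{HomSpace}, and then do the elementary orbit check $\cO_A^l\cap\cO_B^r\neq\emptyset\iff[B]\in\cO_A^{\bi}$. The Wedderburn clause also matches the paper's intent (the paper phrases it via the partition of the regular bimodule into biorbit modules, you phrase it directly as a two-sided ideal in a semisimple algebra; both are equivalent one-liners).
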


Taking this in conjunction with the fact that endomorphism rings of the left and right orbit modules generated by $[A]\in \hat{V}_J$ are isomorphic implies now:

\begin{Cor}\label{orbitsizelr} [\cite{super}, \cite{yan}]
Let $ A \in V_J$. Then $|\cO_A^r| = |\cO_A^l|$ and hence the number of right and left orbits contained in $\cO_A^{\bi}$  coincide. Moreover there exists $\kappa\in\N$ such that
$$
\C\cO_A^{\bi} \cong \bigoplus_{\kappa\text{ many copies}}\C\cO_A^r.
$$
Thus
$$ 
|\cO_A^{\bi}|=\frac{|\cO_A^r||\cO_A^l|}{|\cO_A^r\cap \cO_A^l|}=\frac{|\cO_A^r|^2}{|\cO_A^r\cap \cO_A^l|}
$$
and $$
\kappa = \frac{|\cO_A^r|}{|\cO_A^r\cap \cO_A^l|} .
$$

\end{Cor}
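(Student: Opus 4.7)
The plan is to exploit the bimodule decomposition of $\C \cO_A^{\bi}$ in conjunction with the endomorphism algebra computation of Remark~\ref{HomSpace}, reading off all the cardinality relations by combining Wedderburn theory with Krull--Schmidt.

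First I would check that any two right orbits lying in the same biorbit have the same cardinality: if $[B]\in\cO_A^{\bi}$ then $[B]=u.[A].v$ for some $u,v\in U_J$, and the commuting left and right permutation actions give $\cO_B^r=[B].U_J=u.([A].U_J)=u.\cO_A^r$. Since left multiplication by $u$ acts as a permutation of $\hat V_J$, this is a bijective translate, so $|\cO_B^r|=|\cO_A^r|$. Writing $m$ for the number of right orbits inside $\cO_A^{\bi}$ then gives $|\cO_A^{\bi}|=m|\cO_A^r|$, and Corollary~\ref{leftright} identifies each right orbit module appearing with $\C\cO_A^r$, so
$$
\C\cO_A^{\bi}\cong(\C\cO_A^r)^{\oplus m}
$$
as right $\C U_J$-modules. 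The symmetric argument yields $|\cO_A^{\bi}|=m'|\cO_A^l|$ and $\C\cO_A^{\bi}\cong(\C\cO_A^l)^{\oplus m'}$ as left modules, where $m'$ counts the left orbits inside $\cO_A^{\bi}$.

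Next, Corollary~\ref{leftright} also tells us that $\C\cO_A^{\bi}$ is a union of full Wedderburn components of the semisimple algebra $\C U_J$, so as a right module
$$
\C\cO_A^{\bi}\cong\bigoplus_{i\in I} V_i^{\oplus\dim V_i},
$$
where $I$ indexes the simple right $\C U_J$-modules $V_i$ whose Wedderburn components occur. Comparing with $(\C\cO_A^r)^{\oplus m}$ via Krull--Schmidt forces $\C\cO_A^r\cong\bigoplus_{i\in I}V_i^{\oplus(\dim V_i)/m}$, and Schur's lemma then gives
$$
\dim \End_{\C U_J}(\C\cO_A^r)=\sum_{i\in I}\left(\frac{\dim V_i}{m}\right)^2=\frac{|\cO_A^{\bi}|}{m^2}=\frac{|\cO_A^r|^2}{|\cO_A^{\bi}|}.
$$
The mirror-image calculation yields $\dim\End_{\C U_J}(\C\cO_A^l)=|\cO_A^l|^2/|\cO_A^{\bi}|$.

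Invoking Remark~\ref{HomSpace}, both endomorphism algebras are isomorphic to $\C(\cO_A^l\cap\cO_A^r)$ and therefore have common dimension $|\cO_A^l\cap\cO_A^r|$. Comparing this with the two displayed expressions above forces $|\cO_A^r|^2=|\cO_A^l|^2$, whence $|\cO_A^r|=|\cO_A^l|$ and $m=m'$. Setting $\kappa=m$, the remaining assertions now follow: the isomorphism $\C\cO_A^{\bi}\cong(\C\cO_A^r)^{\oplus\kappa}$ is already in hand, and elementary rewriting produces $|\cO_A^{\bi}|=|\cO_A^r|^2/|\cO_A^l\cap\cO_A^r|=|\cO_A^r||\cO_A^l|/|\cO_A^l\cap\cO_A^r|$ along with $\kappa=|\cO_A^r|/|\cO_A^l\cap\cO_A^r|$. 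I expect the delicate point to be the Krull--Schmidt bookkeeping in the third paragraph: one must upgrade the coarse isomorphism $\C\cO_A^{\bi}\cong(\C\cO_A^r)^{\oplus m}$ to the precise divisibility $m\mid\dim V_i$ for each $i\in I$, which is what makes $\dim\End_{\C U_J}(\C\cO_A^r)$ computable in terms of $|\cO_A^{\bi}|$ and hence matchable with the combinatorial quantity $|\cO_A^l\cap\cO_A^r|$.
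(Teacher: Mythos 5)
Your proof is correct and follows the same route the paper has in mind: the corollary is presented as an immediate consequence of Remark~\ref{HomSpace}, Lemma~\ref{equalororth} and Corollary~\ref{leftright}, and those are precisely the ingredients you combine. The translation argument $\cO_B^r=u.\cO_A^r$ for $[B]=u.[A].v\in\cO_A^{\bi}$, the Wedderburn/Krull--Schmidt bookkeeping that gives $\dim\End_{\C U_J}(\C\cO_A^r)=|\cO_A^{\bi}|/\kappa^2$, and the identification of both endomorphism algebras with $\C(\cO_A^l\cap\cO_A^r)$ from Remark~\ref{HomSpace} are exactly what the paper's phrase ``taking this in conjunction with the fact that endomorphism rings of the left and right orbit modules \dots are isomorphic'' is pointing at; you have simply written out the steps that the paper leaves implicit, including the crucial divisibility $\kappa\mid\dim V_i$ which you correctly flag as the delicate point.
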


\begin{Defn}\label{superchar}
Let $J\subseteq \Phi^+$ be closed and $A\in V_J$. Then the $U_J$-characters afforded by $\C \cO_A^r$ is called \textbf{supercharacters} of $U_J$.
\end{Defn}

\begin{Remark}\label{supertheory}
A supercharacter theory for some finite group $G$ consists of a set partition of the collection of conjugacy classes, the unions of the parts called superclasses, and a set of pairwise orthogonal complex characters, called supercharacters such that every irreducible complex character of $G$ occurs as constituent in precisely one supercharacter. Moreover supercharacters are constant on superclasses and  the number of superclasses and supercharacters should coincide. The supercharacter theory was first introduced by Andr\'{e} [\cite{andre}] and Yan [\cite{yan}] for the unitriangular group $U_n(q)=U_{\Phi^+}$, and then generalized to $\F_q$-algebra groups by Diaconis and Isaacs  in [\cite{super}]. In particular, they produced the explicit character formula below. For the convenience of the reader, we shall also include a proof here:
\end{Remark}

\begin{Theorem}\label{SupCharVal} [\cite{super}, Results 5.6-5.8]
Let $J\subseteq \Phi^+$ be closed, and let $\chi_{_A}$ be the supercharacter, which is  afforded by $[-A]\C U_J$, where $A\in V_J$.  If $g\in U_J$ lies in the superclass $K$, then
\[
	\chi_{_A}(g)=\frac{|[A].U_J|}{|U_J.[A].U_J|}\sum_{[B]\in U_J.[A].U_J}
 [B]\circ f(g)
	=\frac{|[A].U_J|}{|K|}\sum_{h\in K}[A]\circ f(h).
\]
\end{Theorem}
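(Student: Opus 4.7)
The plan is to exhibit a central idempotent of $\C U_J$ whose right ideal affords the character $\kappa\chi_{-A}$ — where $\kappa = |\cO_A^{\bi}|/|\cO_A^r|$ — and to recover the stated formula by comparing this with the general trace formula for ideals generated by central idempotents. Set
\[
\tilde E_A = \frac{1}{|V_J|}\sum_{[B]\in\cO_A^{\bi}}[B]\in\C V_J.
\]
Via the identification $[B] = |V_J|\cdot e_{[-B]}$ between characters and primitive idempotents in the abelian group algebra $\C V_J$, this equals $\sum_{\chi\in\cO_{-A}^{\bi}}e_\chi$; as a function on $V_J$ it is $U_J$-bi-invariant, since $\cO_A^{\bi}$ is closed under the two-sided $U_J$-action on $\hat V_J$, which via the identity $[B](u_1vu_2) = (u_1^{-1}.[B].u_2^{-1})(v)$ shows $\tilde E_A(u_1vu_2) = \tilde E_A(v)$.

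The key step is to verify that $e := f^*(\tilde E_A) = \sum_{g\in U_J}\tilde E_A(f(g))\,g \in \C U_J$ is a central idempotent. Centrality is immediate: bi-invariance of $\tilde E_A$ makes $g\mapsto\tilde E_A(f(g))$ a superclass function. For idempotency, the coefficient of $k$ in $e^2$ equals $\sum_{g}\tilde E_A(f(g))\tilde E_A(f(g^{-1}k))$. After the substitution $l = g^{-1}k$ and the right 1-cocycle identity $f(kl^{-1}) = (f(k)-f(l))l^{-1}$ — which follows from $f(xg) = f(x).g + f(g)$ combined with $f(l^{-1}) = -f(l).l^{-1}$ — the right $U_J$-invariance of $\tilde E_A$ on $V_J$ absorbs the trailing $l^{-1}$, so the coefficient reduces to the additive convolution $\sum_{v\in V_J}\tilde E_A(f(k)-v)\tilde E_A(v) = (\tilde E_A\ast\tilde E_A)(f(k))$, which equals $\tilde E_A(f(k))$ since $\tilde E_A$ is an idempotent of $\C V_J$. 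This idempotency is the main technical obstacle: $f^*$ is \emph{not} an algebra homomorphism, and only the interplay between the 1-cocycle property of $f$ and the bi-invariance of $\tilde E_A$ allows a convolution idempotent in $\C V_J$ to be transported to a multiplicative idempotent in $\C U_J$.

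Once $e$ is a central idempotent, the dimension count $\dim e\,\C U_J = |U_J|\cdot\tilde E_A(0) = |\cO_A^{\bi}|$ together with the evident inclusion $e \in f^*(\C\cO_A^{\bi})$ forces $e\,\C U_J = f^*(\C\cO_A^{\bi})$. By Corollary \ref{orbitsizelr} this equals $\kappa$ copies of $\C\cO_A^r$ as a right $\C U_J$-module, with character $\kappa\chi_{-A}$; on the other hand, the standard trace formula $\chi_{e\,\C U_J}(g) = |U_J|\cdot e(g^{-1})$ for right ideals generated by central idempotents yields $\kappa\chi_{-A}(g) = \sum_{[B]\in\cO_A^{\bi}}[B]\circ f(g^{-1})$. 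Replacing $A$ by $-A$ — which preserves $\kappa$ and sends $\cO_A^{\bi}$ to $\cO_{-A}^{\bi}$ via $[B]\leftrightarrow[-B]$ — gives $\kappa\chi_A(g) = \sum_{[C]\in\cO_A^{\bi}}[-C]\circ f(g^{-1})$. Rewriting each summand via $f(g^{-1}) = -f(g).g^{-1}$ as $[-C]\circ f(g^{-1}) = ([C].g)\circ f(g)$, and relabelling $[D] = [C].g$ using closure of $\cO_A^{\bi}$ under right multiplication by $g$, this becomes $\kappa\chi_A(g) = \sum_{[D]\in\cO_A^{\bi}}[D]\circ f(g)$, which is the first equality.

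The second equality is an orbit-duality count. With $\Omega = f(K) = U_J f(g) U_J\subseteq V_J$, evaluate $\sum_{(v,[B])\in\Omega\times\cO_A^{\bi}}[B](v)$ two ways: for fixed $v\in\Omega$ the inner sum over $[B]$ depends only on the biorbit of $v$ (hence is constant on $\Omega$), and symmetrically for fixed $[B]\in\cO_A^{\bi}$ the inner sum over $v$ is constant on $\cO_A^{\bi}$. Comparing the two evaluations and using $|\Omega| = |K|$ gives $|\cO_A^{\bi}|\sum_{h\in K}[A]\circ f(h) = |K|\sum_{[B]\in\cO_A^{\bi}}[B]\circ f(g)$, which becomes the second equality of the theorem after multiplying by $|\cO_A^r|/(|K|\cdot|\cO_A^{\bi}|)$.
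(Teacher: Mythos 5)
Your proof is correct, and it follows a genuinely different route for the first equality than the paper does. The paper evaluates $\chi_A(g)$ directly from the monomial matrix of $g$ on $\C\cO_{-A}^r$: it writes the trace as a sum of diagonal entries $[B]\circ f(g^{-1})$ over fixed points $[B].g=[B]$, encodes the fixed-point condition as an inner product $\langle [B].g,[B]\rangle$ expanded as an average over $V_J$, and then uses the $1$-cocycle identity to collapse the resulting double sum into a sum over the biorbit. You instead work at the level of ideals of $\C U_J$: you show that $f^*$ carries the orbit-sum idempotent $\tilde E_A\in\C V_J$ to a \emph{central} idempotent $e\in\C U_J$ — which is the real content, since $f^*$ is not an algebra map, and the argument that bi-invariance of $\tilde E_A$ together with the cocycle identity $f(kl^{-1})=(f(k)-f(l)).l^{-1}$ turns the group convolution into the additive one is exactly the right structural observation — and then read off the character from the trace formula $\chi_{e\C U_J}(g)=|U_J|\,\epsilon(g^{-1})$. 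Your approach exposes \emph{why} $f^*(\C\cO_A^{\bi})$ is a sum of Wedderburn blocks by producing its generating central idempotent explicitly (the paper obtains this as Corollary \ref{leftright} from a semisimplicity argument, and you lean on \ref{orbitsizelr} to identify the multiplicity $\kappa$), whereas the paper's computation is more self-contained at the price of a less transparent manipulation. The passage from $\chi_{-A}$ back to $\chi_A$ via the involution $[B]\mapsto[-B]$ and the cocycle identity $f(g^{-1})=-f(g).g^{-1}$ is carefully and correctly done. For the second equality both proofs run the same orbit-duality double count; your $\Omega\times\cO_A^{\bi}$ formulation is a cleaner packaging of the paper's two maps $U_J\times U_J\to U_J.t.U_J$ and $U_J\times U_J\to U_J.[A].U_J$, but the content is identical.
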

\begin{proof}
For the first equation, we will mainly follow Yan's method in [\cite{yan}, Theorem 2.5]. For convenience we set $\cO^r=\cO_{-A}^r$ and $\cO^{\bi}=\cO_{-A}^{\bi}$. Note that the matrix representation of $g\in U_J$ with respect to the monomial basis $\{[B]\mid  B\in \cO^r\}$ is given as complex monomial matrix whose non-zero entries are given in Equation (\ref{monomial action}).
\begin{eqnarray}
\chi_{_A}(g)
&=&\sum_{\stackrel{[B]\in \cO^r}{[B].g=[B]}}[B]\circ f(g^{-1})=\sum_{[B]\in \cO^r}\langle [B].g, [B]   \rangle \, [B]\circ f(g^{-1})\nonumber\\
&=&\frac{1}{|U_J|}\sum_{[B]\in \cO^r}\bigg(\sum_{\tilde g\in U_J}
\theta\circ (B^*.g- B^*)f(\tilde g) \bigg)\theta \circ B^*\circ  f(g^{-1})\nonumber\\
&=&\frac{1}{|U_J|}\sum_{[B]\in \cO^r}\sum_{\tilde g\in U_J}
\theta\big( (B^*.g- B^*)\circ f(\tilde g) +B^*\circ f(g^{-1})\big),   \label{char1}
\end{eqnarray}
where $\langle\quad,\quad\rangle$ denotes the standard inner product of characters of $V_J$.
Moreover
\begin{eqnarray}
(B^*.g- B^*)\circ f(\tilde g) +B^*\circ f (g^{-1})
&=&(B^*.g- B^*)(\tilde g-E) +B^*(g^{-1}-E)\nonumber\\
&=&(B^*.g)(\tilde g-E)-B^*(\tilde g-E) +B^*(g^{-1}-E)\nonumber\\
&=&B^*(\tilde gg^{-1}-g^{-1})-B^*(\tilde g-E) +B^*(g^{-1}-E)\nonumber\\
&=&B^*(\tilde gg^{-1}-\tilde g)=\tilde g^{-1}.B^*(g^{-1}-E) \label{char2}
\end{eqnarray}
Inserting (\ref{char2}) into  (\ref{char1})  and applying \ref{3.11} we obtain:
\begin{eqnarray*}
\chi_{_A}(g)=\frac{1}{|U_J|}\sum_{[B]\in \cO^r}\sum_{\tilde g\in U_J}
\theta\big( \tilde g.B^*(g^{-1}-E)\big) =\frac{1}{|U_J|}\sum_{[B]\in \cO^r}\sum_{\tilde g\in U_J}
\tilde g.[B]( g^{-1}-E) 
\end{eqnarray*}
If $[B]$ runs through all elements of $\cO^r$ and $\tilde g$ runs through all elements of $U_J$, then $\tilde g.[B]$ will run through the elements of $\cO^{\bi}$, each with multiplicity
$
\frac{|\cO^r|\cdot|U_J|}{|\cO^{\bi}|}.
$
Thus we derive the first equation:
\begin{eqnarray}\label{char5}
\chi_{_A}(g)=\frac{|\cO^r|}{|\cO^{\bi}|}\sum_{[B]\in \cO^{\bi}}
 [B]\circ f(g^{-1})=\frac{|[A].U_J|}{|U_J.[A].U_J|}\sum_{[B]\in U_J.[A].U_J}
 [B]\circ f(g)
\end{eqnarray}
Let $t=f(g)=g-E\in V_J$ then the superclass $K$ containing $g$ is given as $E+U_J.t.U_J$. We consider the map $(u,v)\mapsto utv$ from $U_J\times U_J$ to $U_J.t. U_J$. This map is clearly surjective, and it is easy to see that all elements of $U_J.t.U_J$ are hit equally often. Each element of $U_J.t.U_J$, therefore, has the form $u.t.v$ for exactly $\frac{|U_J|^2}{|U_J.t.U_J|}$ ordered pairs $(u,v)$, and it follows that 
\begin{equation}\label{char3}
\sum_{u,v\in U_J}[A] (u.t.v)= \frac{|U_J|^2}{|U_J.t.U_J|}\sum_{x\in U_J.t.U_J}[A](x)
\end{equation}
Similarly, if we map  $U_J\times U_J$ to $U_J.[A]. U_J$ via $(u,v)\mapsto u^{-1}.[A].v^{-1}$, we see that each element of $U_J.[A].U_J$ is hit exactly $\frac{|U_J|^2}{|U_J.[A].U_J|}$ times, and this yields
\begin{equation}\label{char4}
\sum_{u,v\in U_J} u^{-1}.[A].v^{-1}(t)= \frac{|U_J|^2}{|U_J.[A].U_J|}\sum_{[B]\in U_J.[A].U_J}[B](t)
\end{equation}
The left hand side of (\ref{char3}) and (\ref{char4}) are the same, hence
\begin{equation}\label{char}
\frac{1}{|U_J.t.U_J|}\sum_{x\in U_J.t.U_J}[A](x)=\frac{1}{|U_J.[A].U_J|}\sum_{[B]\in U_J.[A].U_J}[B]\circ f(g)
\end{equation}
Inserting (\ref{char}) into  (\ref{char5}) we obtain:
\[
\chi_{_A}(g)=\frac{|[A].U_J|}{|U_J.t.U_J|}\sum_{x\in U_JtU_J}[A](x)=\frac{|[A].U_J|}{|K|}\sum_{h\in K}[A]\circ f(h)
\]
as desired.
\end{proof}

In the special case of $J = \Phi^+$ of the full unitriangular group $U = U_n(q)$ the vector space $V=V_J=\Lie(U)$ consists of all nilpotent upper triangular matrices. We define $A\in V$ to be a \textbf{verge}, if each row and each column of $A$ contains at most one non-zero entry. It is an easy exercise in linear algebra to show, that for every matrix $A\in V$ there exists a unique verge $v(A)$ obtained from $A$ by applying elementary upward row and left to right column operations. As a consequence there is a natural correspondence between the $U$-biorbits on $V$ and hence superclasses of $U$ and verges in $V$. 

To  obtain a similar description of the $U$-biorbits on $\hat V$ we apply restricted downward row and right to left column operations as described in illustrations \ref{3.10} and \ref{3.8} to produce from $[A]\in\hat V$ a unique element $[B] \in \V$ with $B\in V$ a verge, (which is in general different from $v(A)$). Thus $\{U.A.U+E|  A\in V \text{a verge}\}$ is the set of superclasses,  and $\{\chi_A| A\in V \text{a verge}\}$ the set of supercharacters of $U$.

For general pattern subgroups $U_J$ of $U$, $J\subseteq \Phi^+$ closed, the classification of superclasses and supercharacters as defined above is difficult, in fact, it is known in only a few special cases. We   embark next on the task, to find the supercharacters  for a special type of pattern subgroups, called column closed pattern subgroups. 
  
\section{Column closed pattern subgroups}\label{sec3}

\begin{Defn}\label{colclosed}
A subset $J$ of $\Phi^+$ is  \textbf{column closed} if  $J= \{(i,j)|1\leq i < j\leq n, j\in I\}$;
and  \textbf{row closed} if  $J= \{(i,j)|1\leq i < j\leq n, i\in I\}$ for some $I \subseteq \{1,\ldots,n\}$.
\end{Defn}

Thus $J$ is column (row) 
closed, if it arises by removing all positions $(i,j)\in\Phi^+$ of some columns (rows).  Note that column (row) 
closed subsets $J$ of $\Phi^+$ are complemented, that is,  the complementary set $J^c = \Phi^+\backslash J$ is again column (respectively row) closed, namely replacing $I \subseteq \{1,\ldots,n\}$ by $ \{1,\ldots,n\}\backslash I$. Thus $U_J$ and $U_{J^c}$ are complimentary pattern subgroups, that is $U_n(q) = U = U_JU_{J^c} = U_{J^c}U_J$. However,  this is not a semi-direct product in general.

In this paper we concentrate on column closed pattern subgroups. Indeed the row closed patterns behave quite different and there seems to be no  trivial transfer from the column closed to the row closed case, (c.f. \ref{row not easy}). However, we think there is a more subtle way for this using left orbits and hence Hom-spaces between right orbits for the column closed case to get information on the row closed case. This will be explored in a forthcoming investigation. 

Recall that for $J \subseteq \Phi^+$ column closed, the associated pattern subgroup of $U$ is $U_J$, its Lie algebra is $V_J$ and a two sided 1-cocycle $f: U_J \to V_J$ is given by $f:u \mapsto u-E\in V_J$ for $u\in U_J$. The linear characters of $V_J$ are given as $\hat{V}_J = \{[A]| A \in V_J\}$ (see \ref{lide}) If no ambiguities may arise, we drop superscripts $``J"$ and write for instance $U$ for $U_J$ and $V$ for $V_J$.

From now on in illustrations , we draw $[A]$ as strict upper triangle, omitting from matrix $A$ all superfluous zeros of the lower half.

\begin{Example}\label{ExLi}
 Let $n=6$ and $J$ be obtained by taking out column 3 and column 5  from $\Phi^+$, $A=\sum_{(i,j)\in J}A_{ij}e_{ij}\in V$. Then
\begin{center}
\begin{picture}(180, 100)
\put(20,45){$[A]=$}
\put(30,100){\line(1,0){100}}
\put(130,0){\line(0,1){100}}
\put(30,100){\line(1,-1){100}}

\put(50,88){\small $A_{12}$}
\put(69,88){$\ddagger$}
\put(79,88){\small $A_{14}$}
\put(99,88){$\ddagger$}
\put(110,88){\small $A_{16}$}

\put(69,71){$\ddagger$}
\put(79,71){\small $A_{24}$}
\put(99,71){$\ddagger$}
\put(110,71){\small $A_{26}$}

\put(79,54){\small $A_{34}$}
\put(99,54){$\ddagger$}
\put(110,54){\small $A_{36}$}

\put(99,37){$\ddagger$}
\put(110,37){\small$A_{46}$}

\put(110,20){\small$A_{56}$}

\put(160,55) {$A_{ij}\in \F_q, \,\forall\, (i,j)\in J$}
\put(170,35) {``\,$\ddagger$\,'' $\notin\, J$}
\end{picture}
\end{center}
denotes the lidempotent $[A]\in \hat V\subseteq \C V.$ 
\end{Example}

\begin{Defn}\label{flip}For $(i,j)\in J \subseteq \Phi^+$ we define the
\textbf{hook} $^J\!h_{ij}$ centered at $(i,j)\in J$ to be
$$^J\!h_{ij}= {}^J\! h_{ij}^{a} \cup {}^J\!h_{ij}^l \cup \{(i,j)\}, \text{ where }$$
$$ {}^J\! h_{ij}^a=\{(i,k)\in J\,|\,i<k<j\}\text{ called hook
\textbf{  arm} };$$
$$  {}^J\! h_{ij}^l=\{(k,j)\in J\,|\,i<k<j\}\text{ called hook
\textbf{leg} } ;$$
$(i,j)$ is called \textbf{hook center}.
For $J=\Phi^+$ we drop the index $J$, that is, $h_{ij}=^{\Phi\!^{^+}}\!\!\!\!h_{ij}, h_{ij}^a=^{\Phi\!^{^+}}\!\!\!\!h_{ij}^a$ and $ h_{ij}^l=^{\Phi\!^{^+}}\!\!\!\!h_{ij}^l$. It is clear that $\makebox[5pt][l]{$^{J}$} h_{ij}=h_{ij}\cap J$ for $J\subseteq \Phi^+$.
For $J\subseteq \Phi^+$ column closed and $(i,j)\in J$ the column $j$ is entirely contained in $J$ and hence $ ^J\! h_{ij}^l=h_{ij}^l$. Frequently we shall drop therefore the left superscript $J$ in denoting hook legs for $J$ column closed.

There is a bijection $\f=\f_{ij}$ of $h_{ij}\setminus \{(i,j)\}$ into itself, taking
$(i,k)$ to $(k,j)$ and $(k,j)$ to $(i,k)$ for $i<k<j$. We call $\f$ the \textbf{flip (map) centered} at $(i,j)$ and
note that $\f^2=1.$ So $\f$ is a bijection between  $h_{ij}^a$ and $h_{ij}^l$. 
We illustrate this by:
\end{Defn}

\begin{equation}\label{picnormalbijection}
\begin{picture}(200,230)
\put(130,170){\line(0,-1){130}}
\put(130,170){\line(-1,0){130}}
\put(-40,210){\line(1,-1){210}}

\put(130,170){\circle*{4}}
\put(0,170){\circle*{4}}
\put(130,40){\circle*{4}}
\put(130,80){\circle*{4}}
\put(90,170){\circle*{4}}
\put(90,80){\circle*{4}}
\put(135,76){\makebox{$(k,j)$}}
\put(120,180){\makebox{$(i,j)$}}
\put(60,160){\makebox{$(i,k)$}}
\put(-10,160){\makebox{$i$}}
\put(120,30){\makebox{$j$}}
\put(80,70){\makebox{$k$}}

\multiput(90,80)(7,0){6}{\line(1,0){5}}

\put(90,80){\line(0,1){130}}
\multiput(85,87)(0,11.2){11}{\line(4,3){10}}


\put(170,210){\line(0,-1){210}}
\put(170,210){\line(-1,0){210}}
\put(-105,60){\makebox{Here}}
\put(-60,80){\line(0,-1){28}}
\multiput(-65,70)(0,-8){3}{\line(4,3){10}}
\put(-45,60){\makebox{denotes columns,}}
\put(-105,30){\makebox{which are not contained in $J$.}}
\put(210,90){\makebox{$\f_{ij}(i,k)=(k,j)$}}

\put(130,120){\line(1,1){60}}
\put(193,185){$h_{ij}^l$}

\put(30,170){\line(1,1){55}}
\put(82,230){$h_{ij}^a$}
\end{picture}
\end{equation}

\begin{Defn}\label{jnormalrow}
Let  $J\subseteq \Phi^+$ be column closed, and suppose column $k$ is taken out, $1\leqslant k \leqslant n$, that is $(i,k)\notin J$ for $1\leqslant i<k\leqslant n$. Then $\row k =\{(k,j)\in J\,|\, k< j\leqslant n\}$ is said to be {\bf normal for $J$ or $J$-normal}.  The positions $(k,j)$ with $k< j$ are called {\bf $J$-normal} too.
\end{Defn}

\begin{Remark} \label{row not easy}
By \ref{3.13} the root subgroups of $U=U_J$, ($J$ column closed), corresponding to  the $J$-normal positions  act from the right by linear characters on the elements in $\hat V=\hat V_J$.  Similarly, if $J$ is row closed we may consider the left action of $U_J$ on $\hat V_J$ by restricted row operations (see \ref{3.9}). If row $k$  ($1\leqslant k\leqslant n$) is not contained in $J$, then we may call  analogously column $k$ ``$J$-normal'' . Then the  root subgroups corresponding to positions on column $k$  act from the left  by linear characters on the elements in $\hat V_J$. Indeed, the results of this paper carry over immediately to analogous results for row closed pattern subgroups $U_J$ acting from the left on $\hat V_J$.   However  in general  the  root subgroups corresponding to positions on column $k$   (row $k \nsubseteq J$) do {\bf not} act by linear characters from the right on the elements in $\hat V_J$. Thus, for the right action of $U_J$ on $\hat V_J$ by restricted column operations we do not have the notion of $J$-normality for row closed $J$. Instead one can see immediately, that the entries in column $k$ cannot be changed by the restricted column operations in this case and hence are constant on every $U_J$-orbit on $\hat V_J$. As a consequence the results for the column closed $J$  case and right action on $\hat V_J$ do not carry over immediately to the right action on $\hat V_J$ for row closed $J$.
\end{Remark}

\bigskip

Let  $[A]\in \hat{V}$. Next we will state a process to reduce the number of non-zero entries in $[A]$ by applying truncated column operations as described in Proposition \ref{3.11}: If A is the zero matrix, then $\cO_A^r(=\cO_A^l=\cO_A^{\bi})=\{[A]\}$. Otherwise, let column $j$ be the last (i.e. rightmost) non-zero column (belonging to $J$) in $A$. Let $z\in \F_q^*$ be the highest non-zero entry in column $j$ (i.e. first non-zero entry from top in column $j$) of $A$ and suppose it is at position $(i,j)$. Then acting by $x_{kj}(A_{ik}z^{-1})$ on $[A]$ from the right produces a lidempotent $[A']$ in $\hat{V}$ such that $A'$ coincides with $A$ at all positions not in column $k$ and $A'_{ik}=0$:

\begin{center}
\begin{equation}\label{pic4-3}
\begin{picture}(180, 140)
\put(130,0){\line(0,1){130}}
\put(130,0){\line(-1,1){130}}
\put(130,130){\line(-1,0){130}}

\put(100,90){$z$}
\put(100,100){$0$}
\put(101,109){$\vdots$}
\put(100,120){$0$}

\put(60,90){$A_{ik}$}

\put(70,60){\circle*{4}}
\put(58,48){$k$}
\multiput(70,61)(0,7){4}{\line(0,1){5}}
\multiput(70,61)(8,0){4}{\line(1,0){6}}

\put(35,95){\circle*{4}}
\put(24,84){$i$}
\multiput(35,95)(7,0){4}{\line(1,0){5}}

\put(103,60){\circle*{4}}
\put(106,53){\scriptsize$(k,j)$}
\multiput(102.5,57)(0,-7){4}{\line(0,-1){5}}

\put(103,27){\circle*{4}}
\put(92,15){$j$}

\put(102.5,132){\line(0,1){10}}
\put(102.5,142){\line(-1,0){35}}
\put(67.5,142){\vector(0,-1){10}}

\put(78,145){$-\alpha$}

\put(152,132){\line(0,1){10}}
\put(152,142){\line(-1,0){35}}
\put(117,142){\vector(0,-1){20}}
\put(140,125){zero columns}

\put(190,75){$z\in \F_q^*$}
\put(190,55){$\alpha=A_{ik}z^{-1}$}
\end{picture}
\end{equation}
\end{center}

Doing this using suitable elements of the root subgroups $X_{kj}$ in column $j$ for $i<k<j$ we obtain $[B]\in\hat{V}$ such that $B_{ik}=0$ for all positions $(i,k)\in J$ in row $i$ to the left of $(i,j)$.

Now consider $B $ and let column $l$ be the next column to the left of column $j$ (so $l< j$) which is not a zero column in $B$. Note that column $j$ coincides in $[A]$ and $[B]$ and column $l$ is contained in $J$. Moreover, the highest non-zero entry in column $l$ of $[B]$ cannot be located at position $(i,l)$, since $B_{ik}=0$ for all $j<k<i$ and $j<l<i$. Thus suppose the first non-zero entry in column $l$ is at position $(m,l)\in J$. Acting by suitable elements of root subgroups in column $l$ below position $(m,l)$  as above we may make all entries to the left of position $(m,l)$ and belonging to $J$ to zero. Note that by this column $j$ will not be changed any more. Proceeding like this, we will end up with a so called {right template} which we shall define now:

\begin{Defn}\label{template}
\begin{itemize}
\item [(1)] A lidempotent  $[A]\in \hat{V}$ is called {\bf (right) template}, if in $A$  to the left of the highest non-zero  entry in each non-zero column all entries at positions in $J$ are zero; (above that entry all entries in $A$ are zero by default). 
\item [(2)] A {\bf main condition} of a  template $[A]$ is a position $(i,j)$ such that $A_{ij}$ is the highest non-zero entry in column $j$ of $A$. Note that the main conditions of templates are in different rows by construction.  We denote the set of main conditions of the template $[A]$ by main$[A]$. The hooks centered at main conditions are called {\bf main hooks}. Intersections of hooks centered at main conditions are called {\bf main hook intersections}. Thus $(i,k)\in J$ is a main hook intersection if and only if there exist $1\leq i<j<k<l \leq n$ with $(i,k), (j,l) \in $ main$[A]$ and then $\{(j,k)\} =\,^J\!h_{ik}^l$ $\cap$ $^J\!h_{jl}^a$.
\item[(3)]The positions of a template $[A]$ strictly below main conditions not to the left of main conditions are called {\bf supplementary conditions} and their set is denoted by $\suppl(\p)$, where $\p = \main[A]$. Thus $A_{st}\not= 0$ implies $(s, t) \in \p$ or $ (s, t)\in \suppl(\p).$ Note that supplementary conditions are always contained in main hook legs. Indeed  $\suppl(\p)$ consists precisely of the positions on main hook legs different from hook intersections, because those are positions to the left of main conditions.
\end{itemize}
\end{Defn}

We have shown:
\begin{Lemma}\label{pattern exist}Every right orbit in $\hat{V}$ contains a   template.
\end{Lemma}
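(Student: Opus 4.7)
The plan is to formalize the iterative column-clearing algorithm sketched in the discussion preceding Definition \ref{template}, and to verify that it terminates at a template in the same right $U_J$-orbit as the given $[A]$. The case $A = 0$ is trivial (the zero lidempotent is tautologically a template with empty main condition set), so I assume $A \neq 0$ and proceed by descent on the index of the rightmost non-zero column.

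Let column $j$ be the rightmost non-zero column of $A$, and let $(i,j)$ be the highest non-zero entry in column $j$, with value $z = A_{ij} \in \F_q^*$. Since $J$ is column closed and $(i,j) \in J$, the entire column $j$ above the diagonal lies in $J$, so every $X_{kj}$ with $i < k < j$ is contained in $U_J$. For each such $k$ with $A_{ik} \neq 0$, I would act from the right by $x_{kj}(A_{ik}z^{-1})$; by Corollary \ref{3.13}(1) this subtracts $A_{ik}z^{-1}$ times column $j$ from column $k$, so the new $(i,k)$-entry becomes $A_{ik} - A_{ik}z^{-1}\cdot z = 0$. Three observations keep this step clean: (i) column $j$ itself (and every column to its right, which is zero) is untouched since the operation only modifies column $k$; (ii) all rows strictly above $i$ remain unchanged because column $j$ is zero there by the choice of $(i,j)$; and (iii) since $A_{ik} \neq 0$ forces $(i,k) \in J$ and $J$ is column closed, all of column $k$ lies in $J$, so no truncation loss occurs. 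Successively applying these operations for all applicable $k$ produces $[B] \in [A].U_J$ whose column $j$ coincides with that of $A$ and whose row $i$ contains no non-zero $J$-entries strictly to the left of $(i,j)$.

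I would then iterate on $[B]$: its rightmost remaining non-zero column is some $l < j$, and the identical argument yields a main condition $(m, l)$ (necessarily with $m \neq i$, since row $i$ left of $j$ has already been cleared) and clears row $m$ strictly to the left of $(m,l)$. Since every operation used at this stage is of the form $x_{kl}(\alpha)$ with $k < l < j$, it alters only column $k$ and therefore preserves both the previously established main condition $(i,j)$ and the cleared part of row $i$. The rightmost active column strictly decreases in each round, so the algorithm terminates after finitely many rounds at a lidempotent $[C] \in \cO_A^r$ in which every non-zero column satisfies the defining property of a template from Definition \ref{template}(1).

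The main obstacle is purely bookkeeping: one must check that later column operations never undo earlier clearings. This reduces to the single observation that right multiplication by $x_{kl}(\alpha)$ alters only column $k$, combined with the fact that the sequence of active columns strictly decreases—so every column processed at a later stage lies strictly to the left of all columns processed previously.
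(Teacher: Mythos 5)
Your proposal is correct and follows essentially the same column-by-column clearing algorithm that the paper sketches in the discussion preceding Definition \ref{template} and then invokes in \ref{pattern exist}. One small imprecision worth flagging: in the iteration step you assert that because $x_{kl}(\alpha)$ alters only column $k$ (with $k<l<j$) it ``preserves the cleared part of row $i$''; but $(i,k)$ lies in column $k$ and \emph{is} altered, so this is not a consequence of that observation alone. What actually saves the day is that the entry at $(i,k)$ changes by $-\alpha B_{il}$, and $B_{il}=0$ because $(i,l)$ (being strictly left of $(i,j)$) was cleared at an earlier round --- so the alteration is by zero. Wrapping this into the induction (processed main conditions $(i_r,c_r)$ have zeros on row $i_r$ strictly left of column $c_r$; the source column $c_{s+1}<c_r$ is therefore zero in row $i_r$; hence later operations add only zeros there) closes the bookkeeping cleanly. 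The paper is equally terse on this point, so this is a refinement rather than a genuine error in your reasoning.
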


Next we shall prove that there is only one template in each right orbit.

\begin{Defn}\label{defofpstab} Let $[A]\in \hat{V}$. The {\bf (right) projective stabilizer} $\Pstab_{U}[A]$ of $[A]$ in $U=U_J$ is defined to be
\[
\Pstab_U[A]=\{u\in U\,|\, [A] u=\lambda_u [A], \, \text{for some }  \lambda_u \in \C ^*\}.
\]
Thus $\Pstab_U[A]$ acts on $\C [A]$ by the linear character $\tau: u\mapsto \lambda_u\in \C^*$. By general theory we have
\[
\C \cO_A^r=[A]\C U\cong \Ind^ {U}_{\Pstab_{U} [A]} \C_\tau,
\]
where $\C_\tau$ denotes the one dimensional $\C \Pstab_{U}[A]$-module affording $\tau$. The left projective stabilizer $\Pstab_{U}^{\ell}[A]$ is defined analogously.
\end{Defn}

\begin{Defn}\label{jnormalpattern}Let $[A]$ be a template and $\p=\main[A]$.  Recall that the positions of $J$ in row $i$ are called $J$-normal if column $i$ does not belong to $J$. The positions in $ \suppl(\p)$ which  belong to  $J$-normal rows are called {\bf normal (supplementary) conditions}, denoted by $\cN(\p, J)$ and the set $\cY(\p, J):=\suppl(\p) \setminus \cN(\p, J)$ consists of   the   supplementary conditions which are not $J$-normal, called {\bf non normal (supplementary) conditions} or {\bf  $\cY$-conditions} for short. Moreover $[A]$ is a {\bf normal template}, if  $A$ has only  non zero entries on main conditions or normal conditions.
\end{Defn}

\begin{Example}\label{hookintersection}
Let  $J=\Phi^+\setminus \{\text{column $k$}\}$, let $[A]$ be a template in $\V_J$ with main condition set $\p=\{(i,l), (j,m)\}$ and let $A_{il} = z_2$ and $A_{jm}=z_1$.
\begin{center}
\begin{picture}(250,160)
\put(-50,80){$[A]=$}

\put(0,160){\line(1,0){160}}
\put(0,160){\line(1,-1){160}}
\put(160,160){\line(0,-1){160}}

\put(20,140){\line(1,0){82}}
\put(102,137){$z_2$}
\put(105,135){\line(0,-1){80}}
\put(7,135){$i$}
\put(100,43){$l$}

\put(69,70){$k$}

\put(50,110){\line(1,0){82}}
\put(132,107){$z_1$}
\put(135,105){\line(0,-1){80}}
\put(32,107){$j$}
\put(126,16){$m$}
\put(105,110){\circle{4}}
\put(105,110){\line(2,1){70}}
\put(178,143){main hook intersection}

\put(80,80){\line(0,1){80}}
\multiput(75,85)(0,9){8}{\line(1,1){10}}
\multiput(80,80)(6.25,0){13}{\line(1,0){5}}

\put(135,80){\circle*{3}}
\put(105,80){\circle*{3}}
\put(105,80){\line(2,1){75}}
\put(135,80){\line(6,5){45}}
\put(182,115){normal supplementary conditions}
\end{picture}
\end{center}
Then row $k$ is normal, $(j,l)$ is the main hook intersection, and the elements in column $l$ south of $z_2$ and column m south of $z_1$ in $J$ except $(j,l)$ are the supplementary conditions. The supplementary conditions $(k,l)$ and $(k,m)$ are in addition normal, all others are $\cY$-conditions.
\end{Example}

\begin{Defn}\label{jplaces}
Let $[A]$ be a template and $\p=\main[A]$. 
Then the positions on the hook arms 
$^J\! h^a_{ij}, for (i,j)\in \p$ are called {\bf $J$-places} for $\p$. Denote the set of these positions by $\Pl(\p,J)$ So these are the positions in $J$ to the left of and on the rows of main conditions.
\end{Defn}

\begin{Remark}\label{feed}
Keep the notation in the definition above. For any $(i,j)\in \p$, our assumption that $J$ is column closed says that column $j$  and hence all positions on the hook leg $^J\!h_{ij}^l$ belong to $J$.  In particular, if $(i,k)\in J$  is on the hook arm  $^J\!h_{ij}^a$,  the flipped position $(k,j) \in$ $ ^J\!h_{ij}^l $  belongs to $J$ too and hence acting by the root subgroup $X_{kj}$ changes entries in column $k$ and especially at position $(i,k)$.  Direct inspection shows that the flip map $\f$ maps the $J$-places $\Pl(\p,J)\cap ^J\! h^a_{ij}$ on the main hook arm $^J\! h^a_{ij}$ bijectively to the set of non normal positions of $^J\! h^l_{ij} = h^l_{ij}$, that is the union of the set $\cY(\p, J)\cap  h^l_{ij}$  of  $\cY$- conditions and non normal main hook intersections  on the hook leg $ h^l_{ij}$.  
\end{Remark}

 By  \ref{pattern exist} for any $[B]\in \V$ we can find a template $[A]\in \cO^r_B\subseteq\V$. In view of   \ref{defofpstab} it suffices to determine $ \Pstab_{_{U\!_J}}[A]$ in order to find $|\cO_A^r|=\dim \C\cO_A^r$.

\begin{Defn}\label{cR}
Let $[A]\in \V$ be a template with $\p=\main[A]$.
Define $\cR=\cR(\p, J)\subseteq J$ to be the set of the following positions:
\begin{enumerate}
\item [1)] The main conditions in $\p$ and all positions above those.
\item [2)] All positions in  zero columns.
\item [3)] All positions of $J$ in normal rows.
\end{enumerate}
\end{Defn}
Note that this definition depends  only on the main conditions and $J$ itself.  We  indicate $\cR$ $J$-normal rows and $J$-places in the following illustration:

\begin{equation}\label{pic4-4}
\begin{picture}(350,160)
\put(170,160){\line(0,-1){180}}
\put(170,160){\line(-1,0){180}}
\put(-10,160){\line(1,-1){180}}

\put(80,70){\line(0,1){90}}
\multiput(75,78)(0,8){10}{\line(5,3){10}}

\put(132.9,90.9){$\square$}
\multiput(137.2,117)(0,6){5}{\circle*{1}}
\multiput(133,92)(0,8){3}{$\times$}
\multiput(133,142)(0,8){2}{$\times$}
\multiput(129,94)(-7.5,0){6}{\circle*{4}}
\multiput(71,94)(-7.5,0){2}{\circle*{4}}

\multiput(137,87)(0,-6){3}{\circle{4}}
\multiput(137,63)(0,-6){8}{\circle{4}}
\put(133,67){$\times$}

\multiput(109.5,45.8)(0,8){2}{$\times$}
\multiput(114,65)(0,5){15}{\circle*{1}}
\multiput(109.5,141)(0,8){2}{$\times$}

\multiput(102,70)(6,0){9}{\circle*{1}}

\multiput(85.1,67)(6.16,0){2}{$\times$}
\multiput(152.6,66.8)(6.7,0){2}{$\times$}

\put(73,20){\line(1,1){35}}
\put(50,10){\makebox{zero column}}

\put(158,66){\line(1,-1){28}}
\put(190,33){\makebox{$J$-normal row}}

\put(12,85){\line(1,1){60}}
\put(-15,72){\makebox{column  $\nsubseteq J$}}

\put(190,142){\makebox{$\times$ are positions in $\cR$}}
\put(190,124){$\square$}
\put(190,125){$\times$}
\put(202,125){\makebox{denotes the main condition}}
\put(202,110){\makebox{at position $(i,j)$}}
\put(195,95){\circle*{4}}
\put(202,92){\makebox{are $J$-places in $\Pl(\p,J)$, e.g. position $(i,k)$}}
\put(195,77){\circle{4}}
\put(202,74){\makebox{denote the positions in $J$ but not in $\cR$}}
\put(202,56){\makebox{e.g. position $(k,j)$}}

\put(114,36){{\circle*{2}}}
\put(110,24){$k$}

\put(137,13){{\circle*{2}}}
\put(133,0){$j$}

\put(56,94){{\circle*{2}}}
\put(45,90){$i$}
\end{picture}
\end{equation}

\bigskip

\begin{Prop}\label{pstabcR}
Let $[A]\in\V$ be a template with $\p=\main[A]$ and define $\cR=\cR(\p,J)$ as in \ref{cR}. Then $\cR$ is a closed subset of $\Phi^+$ and $\Pstab_U [A]=U_\cR$. Thus 
$\C \cO^r_A=[A]\C U\cong\Ind_{U_\cR}^{U} \C [A]$. In particular,  for $(i,j)\in \cR$, if the linear character of $X_{ij}$ acting on $[A]$ is non trivial then $(i,j)\in \main[A]$ or $i$ is a $J$-normal row.
\end{Prop}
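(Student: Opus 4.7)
The plan is to proceed in four stages: (i) verify $\cR$ is closed; (ii) show $U_\cR\subseteq \Pstab_U[A]$; (iii) show the reverse inclusion by a counting argument; (iv) deduce the induced module isomorphism and the final assertion. For stage~(i), given $(i,j),(j,k)\in\cR$ I case-split on the three clauses of Definition~\ref{cR}. Several sub-cases are immediate: if column~$k$ is zero then $(i,k)\in\cR$ by clause~(2), and if row~$i$ is $J$-normal then $(i,k)\in\cR$ by clause~(3). One sub-case is ruled out: row~$j$ being $J$-normal would mean column~$j\notin J$, contradicting $(i,j)\in\cR\subseteq J$. In the remaining case clause~(1) applies to $(j,k)$, giving a main condition $(m,k)\in\p$ with $m\geq j>i$, so $(i,k)$ lies strictly above $(m,k)$ in column~$k$ and hence $(i,k)\in\cR$ by clause~(1). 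For stage~(ii), Corollary~\ref{3.13}(1) gives $[A]x_{ij}(\alpha)=\theta(-\alpha A_{ij})[B]$ where $B$ agrees with $A$ outside column~$i$ and $B_{si}=A_{si}-\alpha A_{sj}$ for $(s,i)\in J$ with $s<i$; thus $X_{ij}$ stabilises $[A]$ up to a scalar precisely when $A_{sj}=0$ for every such~$s$. For $(i,j)\in\cR$ this is verified clause by clause: the topmost-nonzero property of main conditions kills $A_{sj}$ for $s<i$ in clause~(1); the whole column vanishes in clause~(2); and the requirement $(s,i)\in J$ is vacuous in clause~(3) since column~$i\notin J$. Combined with closedness, this yields $U_\cR\subseteq \Pstab_U[A]$.

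For stage~(iii), stage~(ii) gives $|\cO_A^r|=[U:\Pstab_U[A]]\leq [U:U_\cR]=q^{|J\setminus \cR|}$, so it suffices to exhibit $q^{|J\setminus \cR|}$ distinct lidempotents in $\cO_A^r$; for this I parametrise by the coefficients $\alpha_{ij}\in\F_q$ of a product $u_\sim=\prod_{(i,j)\in J\setminus \cR}x_{ij}(\alpha_{ij})$ taken in a carefully chosen order. For each $(i,j)\in J\setminus\cR$ the failure of clauses~(1)--(2) supplies a unique main condition $(m,j)\in\p$ with $m<i$, and the failure of clause~(3) forces column~$i\in J$; hence the witness $(m,i)$ is a $J$-place on the hook arm $h^a_{m,j}$, and the map $(i,j)\mapsto (m,i)$ is the flip bijection of Remark~\ref{feed}. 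I would order the product by increasing first index~$i$ (and, within each $i$, by increasing row~$m$ of the associated main condition); in this order the source column~$j$ of each operation has not yet been modified by any earlier operation, so the witness entry $B_{m,i}$ in the final matrix depends linearly only on the $\alpha_{i,j'}$ sharing the first index, through the coefficient matrix with $(s,s')$-entry $A_{m_s,j_{s'}}$. Two template facts — uniqueness of main conditions per row, and the vanishing of template entries strictly above main conditions — make this coefficient matrix lower triangular with nonzero diagonal (the diagonal entries being the main-condition values), hence invertible; the $\alpha_{ij}$ can then be recovered from any prescribed assignment of $J$-place entries, yielding $q^{|J\setminus \cR|}$ distinct orbit representatives and forcing $\Pstab_U[A]=U_\cR$.

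The induced module isomorphism $\C\cO^r_A\cong \Ind^U_{U_\cR}\C[A]$ is then immediate from Definition~\ref{defofpstab}. For the last assertion, the scalar in stage~(ii) is $\theta(-\alpha A_{ij})$, nontrivial precisely when $A_{ij}\neq 0$; among positions of $\cR$ in a template this occurs only at main conditions (clause~(1)) or at normal supplementary conditions sitting in $J$-normal rows (clause~(3)), since positions strictly above main conditions and positions in zero columns have zero entries by the definition of a template. The main obstacle is stage~(iii): one must verify the triangularity of the coefficient matrix, which rests on the combinatorial structure of templates (unique main conditions per row, and the zero pattern dictated by supplementary conditions not being to the left of any main condition), and arrange the order of the product so that each witness entry depends only on the coefficients sharing its first index.
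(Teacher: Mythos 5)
Your proposal is correct and follows essentially the same approach as the paper: the same ordering of $J\setminus\cR$ (by target column $i$, then by the row of the associated main condition), the same flip bijection from Remark~\ref{feed}, and the same observation that a witness entry $B_{m,i}$ in column $i$ is unaffected by operations with first index $\neq i$ and uses unmodified source columns. The only stylistic difference is in stage~(iii): the paper argues by contradiction, taking the first non-identity factor of $y\in\Pstab_U[A]\cap U_{J\setminus\cR}$ and observing that the nonzero entry it deposits at the flipped $J$-place is never erased by later factors; you instead make this quantitative by exhibiting the lower-triangular coefficient matrix $(A_{m_s,j_{s'}})$ with nonzero diagonal, inverting it to produce $q^{|J\setminus\cR|}$ distinct orbit elements, and invoking the orbit-stabiliser bound. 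The two arguments are logically equivalent; your version in fact anticipates Proposition~\ref{original notation for right}, which the paper proves immediately afterwards by the same device.
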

\begin{proof}
 Directly inspecting the commutators (e.g. in illustration \ref{pic4-4}) one sees easily that $\cR$ is a closed subset of $\Phi^+$.

Let $(i,j)\in J$. If $(i,j)$ belongs to one of the three types of positions above, then by direct calculation using \ref{3.8} we see the root subgroup $X_{ij}\subseteq \Pstab_{U} [A]$ and the last statement of the argument holds. So we have shown $U_{\cR}\subseteq \Pstab_{U} [A]$.

To show the inverse inclusion we first order $J$ linearly as follows: Recall that the positions in  $\Pl(\p,J)$ are the  $J$-places  and are always located on main hook arms. We order this set along columns left to right and such that for each column higher places come first. Thus for $(i,j), (k,l)\in \Pl(\p,J)$ we set $(i,j)\leq (k,l)$ if either $j< l$ or $j=l$ and $i\leq k$. This defines a linear order on the set $\Pl(\p,J)$ of $J$-places which consists of all positions of the main hook arms. Using the various flip maps centered at main conditions  we can by  \ref{feed} transfer this ordering to the set of all non normal positions on the main hook arms which  we know is $J\setminus\cR$.  

Let $(r,j)\in\p$ and suppose $(i,j)\in ^J\!h_{rj}^l $ is not normal, that is $i\in J$. Then $(i,j)\in J\setminus\cR$ and $(r,i)= \f(i,j) \in \Pl(\p,J)$. The root subgroup $X_{ij}$ acting from the right on lidempotents will change  entries only in column $i$. Thus, if $ (k,l)\in \Pl(\p,J)$ comes strictly earlier than $(r,i)$  in our chosen order, then either $l< i$ or $l=i$ and $k<r$.  Let $[B]\in\cO_A^r$ and suppose, columns $j$ of $A$ and $B$ coincide and hence $B_{sj}=0$ for $1\le s<r$, since $(r,j)\in \p$. Let $\alpha\in\F_q$ and set $[C]=[B]x_{ij}(\alpha)$. Then $C_{si}=B_{si}$ for all $1\le s<r$. Moreover all columns of $B$ and $C$ coincide except possibly the $i$th one and hence $B$ and $C$ coincide in particular in all positions of $\Pl(\p,J)$ coming earlier than $(r,i)$.

We order $J$ such that all roots of $\cR$ come first and then the roots in $J\setminus\cR$ in the ordering constructed above. By \ref{3.1} every element $g$ of $U_J$ may be uniquely written as:
$$
g=\prod_{(a,b)\in J}x_{ab}(\alpha_{ab})
$$
where   the product is taken in the linear ordering of $J$ constructed above. 
Thus if $g\in \Pstab_{U} [A]$  we may write $g$ as product $g=xy$, where $x\in U_\cR\subseteq \Pstab_{U} [A]$. Now $x$ fixes $[A]$  up to a scalar yielding $[A] y=\lambda\, [A]$ for some $\lambda \in \C^*$, since $g\in \Pstab_{U} [A]$. By construction, $y$ is a product of elementary matrices $x_{ij}(\alpha)$ with $(i,j)\in J\setminus \cR$, hence $(i,j)\in \J h_{r j}^l$ for  some main condition $(r, j)\in \p$. Note that $(i,j)$ is not contained in a normal row, that is $i\in J$. 
Suppose $y\neq 1$. Writing 
$$
y = \prod_{(a,b)\in J\setminus \cR}x_{ab}(\alpha_{ab})
$$

in the order constructed above suppose, that the first factor with $\alpha_{ab} \neq 0$ and hence $x_{ab}(\alpha_{ab})\neq 1$ is at position $(a,b) = (i,j)\in  \J h_{r j}^l$. Then 
$$
[A].x_{ij}(\alpha_{ij}) = [B], \text{ where } B_{ri}=A_{ri} -\alpha_{ij}A_{rj} \neq A_{ri}.
$$
But  entry $B_{ik}$ will never be changed by the subsequent factors in $y$, contradicting $y\in \Pstab_{U}[A]$. Thus $y=1$ and $g=x\in U_\cR$, as desired.
\end{proof}

\begin{Cor}\label{unique one}
Each right $U_J$-orbit contains exactly one template and hence those classify the right orbits of $U_J$ acting on $\hat{V}_J$.
\end{Cor}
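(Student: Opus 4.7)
Existence is provided by Lemma \ref{pattern exist}, so it remains to prove uniqueness. Suppose $[A]$ and $[B]$ are two templates lying in a common right $U_J$-orbit; set $\p = \main[A]$ and $\cR = \cR(\p, J)$. Write $B = \pi_{_J}(A u^{-t})$ for some $u \in U_J$. Expanding the matrix product and using that $u^{-1} \in U_J$ has non-zero off-diagonal entries only at positions in $J$, we obtain
\[
B_{sk} \,=\, A_{sk} + \sum_{l > k,\, l\in\mathrm{cols}(J)} \gamma_{k,l}\, A_{sl} \qquad \text{for } (s,k) \in J,
\]
where $\gamma_{k,l} = (u^{-1})_{kl} \in \F_q$. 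The plan is to prove by induction on $k$ running through $\mathrm{cols}(J)$ from right to left that column $k$ of $A$ coincides with column $k$ of $B$. The base case, where $k$ is the rightmost element of $\mathrm{cols}(J)$, is immediate since the sum above is empty.

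For the inductive step, fix $k \in \mathrm{cols}(J)$ and assume the columns of $A$ and $B$ agree for all $k' \in \mathrm{cols}(J)$ with $k' > k$. Then the main conditions of $A$ and $B$ in columns to the right of $k$ coincide; write $(r_l, l)$ for the main condition in column $l$ whenever $l > k$ is a main column. For each such $l$ with $r_l < k$, the template conditions of both $A$ and $B$ at row $r_l$ (whose unique main condition lies in column $l > k$) force $A_{r_l, k} = B_{r_l, k} = 0$, and substituting into the displayed formula yields
\[
\sum_{l'} \gamma_{k, l'}\, A_{r_l, l'} \,=\, 0 .
\]
Among these summands, those with $(k, l') \in \cR$ contribute nothing: either $l'$ is a zero column of $A$ (forcing $A_{r_l, l'} = 0$), or $l'$ is a main column whose main row satisfies $r_{l'} \ge k > r_l$, placing $(r_l, l')$ strictly above the main condition of column $l'$ and again forcing $A_{r_l, l'} = 0$.

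The surviving terms are indexed by $(k, l') \in J \setminus \cR$, namely the main columns $l'$ to the right of $k$ with $r_{l'} < k$; these are in bijection (via $l \mapsto r_l$) with the rows $r_l$ indexing the equations, giving a square linear system. Ordering both axes by the main column ascending, the coefficient matrix $(A_{r_l, l'})$ is upper triangular with non-zero diagonal: for $l' < l$ the template condition on row $r_l$ of $A$ forces $A_{r_l, l'} = 0$, while the diagonal entry $A_{r_l, l}$ is the non-zero main-condition value. Hence the matrix is invertible, every $\gamma_{k, l}$ contributing to the surviving sum vanishes, and $B_{sk} = A_{sk}$ for every admissible $s$. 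This completes the inductive step, so $A = B$ as matrices in $V_J$ and hence $[A] = [B]$ in $\hat V_J$. The main technical point is the upper-triangular structure of this coefficient matrix, which depends crucially on the defining template condition of $A$ zeroing out entries to the left of main conditions in their respective rows.
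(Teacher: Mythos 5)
Your proof is correct, and it takes a genuinely different route from the paper's. The paper argues indirectly: Proposition \ref{pstabcR} identifies $\Pstab_U[A]=U_\cR$, then the orbit is parameterized by cosets of $U_\cR$, which via the flip maps correspond bijectively to the $J$-places $\Pl(\p,J)$; since templates have all $J$-places equal to zero and the parameterization is free, there can be only one template per orbit. You instead compute $B=\pi_J(Au^{-t})$ directly and induct from right to left over $\mathrm{cols}(J)$: the template conditions on rows $r_l$ (for main columns $l>k$ with $r_l<k$) give a square linear system in the coefficients $\gamma_{k,l'}$, and the upper-triangular invertibility of $(A_{r_l,l'})$ — a direct consequence of the defining template condition — forces the $\gamma$'s that could alter column $k$ to vanish. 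Your argument is more elementary and self-contained, needing only the action formula and the definitions of template and $\cR$, rather than the projective-stabilizer computation \ref{pstabcR}; the paper's approach, in exchange, delivers the structural free parameterization of $\cO_A^r$ by $J$-places, which is reused repeatedly in the later counting arguments. One small remark: in your final sentence, after concluding that the surviving $\gamma_{k,l}$'s vanish, the claim $B_{sk}=A_{sk}$ for \emph{all} admissible $s<k$ requires also that $A_{s,l'}=0$ for the discarded $l'$ with $(k,l')\in\cR$; you established this for $s=r_l$, and the identical reasoning ($s<k\le r_{l'}$ or $l'$ a zero column) applies for arbitrary $s<k$, so the gap is only one of exposition, not substance.
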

\begin{proof}
The existence has been showed in \ref{pattern exist} so we only need to prove the uniqueness.
 Let $[A]\in \V$ be a template with $\p=\main [A]$. Define $\cR=\cR(\p, J)$ as in \ref{cR}. 

Let $I = J\setminus \cR$. So $I$ consists of all positions in $J$ which belong to the main hook legs but are not $J$-normal positions. Then $\{A+E\mid A\in V_J, \supp(A)\subseteq I\}$ give a set of coset representatives of $U_{\cR}$ in $U_J$ (c.f. the illustration \ref{pic4-4} above). Applying the flip maps centered at main hooks (see \ref{flip}) we see, that these positions are in bijection with  the collection of $J$-places on the hook arms, compare \ref{jplaces}. Since a template must have zeros on all $J$-places we conclude that $[A]$ is the unique template in its right orbit.
\end{proof}
 
We can now provide a description of the elements in the $U$-right orbits on $\V$:

\begin{Prop} \label{original notation for right}
Let $[A]\in \V$ be a template with $\main[A] = \p$ and let $\Pl(\p,J)$  be the set of $J$-places of $[A]$. Then for any choice of coefficients $\alpha_{ij}\in\F_q$ for $(i,j)\in\Pl(\p,J)$, there  exists a unique $[B]\in \cO_A^r$ with $B_{ij}=\alpha_{ij}$ for all $(i,j)\in\Pl(\p,J)$. Thus in particular $|\cO_A^r| = q^{|\Pl(\p,J)|}$ and for $[B]\in \cO_A^r$ the entries $B_{kl}$, where $(k,l)\in J$ but $(k,l)\notin  \Pl(\p,J)$,  depend linearly on $B_{ij}$ with $(i,j)\in  \Pl(\p,J)$.
\end{Prop}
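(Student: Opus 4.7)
The plan is to combine the identification of the projective stabiliser in \ref{pstabcR} with an explicit linear-algebra analysis of the evaluation map
\[
\mathrm{ev}\colon \cO_A^r \longrightarrow \F_q^{\Pl(\p,J)}, \quad [B]\mapsto (B_{ij})_{(i,j)\in\Pl(\p,J)}.
\]
Proposition \ref{pstabcR} gives $\Pstab_U[A]=U_{\cR}$, so $|\cO_A^r|=[U_J:U_{\cR}]=q^{|J\setminus\cR|}$, and by the flip bijection of \ref{feed} this equals $q^{|\Pl(\p,J)|}$. Since the domain and codomain of $\mathrm{ev}$ have the same cardinality, bijectivity of $\mathrm{ev}$ (which is exactly what the proposition asserts) is equivalent to its surjectivity, and the cardinality formula $|\cO_A^r|=q^{|\Pl(\p,J)|}$ is then automatic.

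The next step is to parametrise $\cO_A^r$ by root-subgroup operations in a carefully chosen order. Fix a linear order on $J\setminus\cR$ in which the second (column) index is strictly increasing, breaking ties arbitrarily, and extend it to a linear order on $J$ with $\cR$ coming first. By (\ref{3.1}), every $g\in U_J$ then factors uniquely as $g=g_{\cR}\cdot y$ with $g_{\cR}\in U_{\cR}$ and $y=\prod_{(k,l)\in J\setminus\cR}x_{kl}(\alpha_{kl})$ in the chosen order. Since $U_{\cR}=\Pstab_U[A]$ fixes $[A]$ up to a non-zero scalar, $(\alpha_{kl})\mapsto [A]y$ is already a bijection $\F_q^{J\setminus\cR}\to\cO_A^r$. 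The crucial property of this ordering is that whenever the factor $x_{kl}(\alpha_{kl})$ is applied, column $l$ of the accumulated matrix still coincides with column $l$ of $A$: the only operations altering column $l$ are of the form $x_{lm}(\cdot)$ with $m>l$, and these appear strictly later in our order. Iterating \ref{3.13}(1) then yields the closed form
\[
B_{sk}=A_{sk}-\sum_{l\,:\,(k,l)\in J\setminus\cR}\alpha_{kl}\,A_{sl}\qquad\text{for }(s,k)\in J,
\]
an $\F_q$-linear expression in the parameters $\alpha_{kl}$.

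Specialising to $(r,k)\in\Pl(\p,J)$ and using $A_{rk}=0$ (as $[A]$ is a template), proving surjectivity of $\mathrm{ev}$ reduces, column by column, to solving the square system $\beta_{rk}=-\sum_{l\in L_k}\alpha_{kl}A_{rl}$ for arbitrary prescribed $(\beta_{rk})$. Here the row set $R_k=\{r:(r,k)\in\Pl(\p,J)\}$ and column set $L_k=\{l:(k,l)\in J\setminus\cR\}$ are placed in bijection by the flip via $r\mapsto l'_r$, where $(r,l'_r)\in\p$ is the unique main condition in row $r$. Ordering both $R_k$ and $L_k$ by $l'_r$ (respectively $l$) ascending, the diagonal entry equals $-A_{r,l'_r}=-z_{l'_r}\neq 0$ by the definition of a main condition. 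A below-diagonal entry $-A_{r_i,l'_{r_j}}$ with $l'_{r_j}<l'_{r_i}$ corresponds to a position $(r_i,l'_{r_j})$ lying strictly to the left of the main condition $(r_i,l'_{r_i})$ in row $r_i$, and the template axiom then forces $A_{r_i,l'_{r_j}}=0$. Thus each column-$k$ subsystem is upper triangular with non-zero diagonal and therefore invertible, proving surjectivity (and hence bijectivity) of $\mathrm{ev}$.

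The linear-dependence assertion follows by composing two linear maps: the closed form above writes each $B_{kl}$ affine-linearly in $(\alpha_{kl})$, and inverting the triangular column-$k$ subsystems writes each $\alpha_{kl}$ linearly in $(B_{ij})_{(i,j)\in\Pl(\p,J)}$; substitution gives the desired linear (in fact affine-linear) dependence of $B_{kl}$ on the $J$-place entries. The one genuinely technical step in the whole argument is the triangularity of the column-$k$ subsystems, which rests squarely on the template axiom forcing zeros to the left of main conditions in their own rows; everything else is bookkeeping around the increasing-second-index ordering of root-subgroup operations.
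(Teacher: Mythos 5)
Your proof is correct and follows essentially the same strategy as the paper's: identify $\Pstab_U[A]=U_\cR$ via \ref{pstabcR}, enumerate $\cO_A^r$ by ordered products of root-subgroup factors indexed by $J\setminus\cR$, and use a triangularity argument (with the template axiom supplying the needed zeros to the left of main conditions) to show the $J$-place entries can be prescribed freely. The only real deviation is the linear order on $J\setminus\cR$: you order by ascending second (column) index so that column $l$ is untouched when $x_{kl}$ is applied, whereas the paper transports the column-then-row order on $\Pl(\p,J)$ through the flip, which amounts to ordering $J\setminus\cR$ by ascending first (row) index with a tiebreak on the row of the relevant main condition; both work, and yours is arguably a bit cleaner and has the bonus of yielding an explicit affine-linear formula for the dependent entries, which the paper leaves implicit.
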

\begin{proof}
By \ref{pstabcR} $\Pstab_U [A]=U_\cR$, where $\cR\subseteq J$ is defined in \ref{cR}. We order $J$ as in the proof of  \ref{pstabcR}. Obviously, when acting by $y$ on $[A]$ the first factors of $y$ belonging to $U_\cR$ will contribute only a non-zero scalar and can be ignored. A factor $x_{kj}(-\alpha), \alpha\in\F_q$ with $(k,j) \in \J h_{i j}^l, (i,j) \in \p$ will insert an entry $\alpha$ at the flipped position $\f_{ij}(k,j) = (i,k)\in \Pl(\p,J)$ and this entry $\alpha$ at that position $(i,k)\in \Pl(\p,J)$ will never be changed by the subsequent factors in $y$ (comp. proof of \ref{pstabcR}). From this the proposition follows immediately.
\end{proof}

So far we have achieved  the classification of $U_J$-right orbits on $\V_J$ by templates and described the elements in these orbits. By \ref{equalororth}  we know, that two different right orbit modules are either isomorphic, affording equal supercharacters or are disjoint  affording orthogonal ones. Thus   in order to classify the supercharacters of $U_J$ we need to determine, which of our right orbits are isomorphic and which are not. This will be done inspecting $\Hom$-spaces between orbit modules and make use of \ref{HomSpace}.

\section{A guiding example}
Throughout this section $J\subseteq\Phi^+$ is column closed and $U=U_J, V=V_J$. The next lemma shows, that for templates in $\V$ we may always assume that there are no non-zero $\cY$-conditions:
 
\begin{Lemma}\label{yiso}
Let $[A]$ and $[B]$ be two templates in $\V$. If $\main[A]=\main[B]$, then $\Pstab_{U_J} ([A])=\Pstab_{U_J} ([B])$. In addition, if the normal supplementary conditions of $[A]$ and $[B]$  coincide, then the  linear characters of their projective stabilizers acting  on $[A]$ (resp. $ [B]$) from right coincide as well. Thus templates $[A]$ and $[B]$ differing only in $\cY$-conditions generate different right orbits $\cO_A^r\neq\cO_B^r$ but isomorphic orbit modules $\C\cO_A^r\cong\C\cO_B^r$.
\end{Lemma}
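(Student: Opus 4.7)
The plan is to apply Proposition~\ref{pstabcR} directly. Since the set $\cR(\p,J)$ is built out of positional data determined purely by $\p$ and $J$ (zero columns are those columns of $J$ containing no position of $\p$, and $J$-normal rows are intrinsic to $J$), the hypothesis $\main[A]=\main[B]$ forces $\cR(\main[A],J)=\cR(\main[B],J)$, and hence
$$
\Pstab_{U_J}[A] \;=\; U_{\cR(\main[A],J)} \;=\; U_{\cR(\main[B],J)} \;=\; \Pstab_{U_J}[B],
$$
which gives the first conclusion.

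For the second assertion, the key is that the linear character $\tau_A\colon U_\cR \to \C^{*}$ through which $[A]$ is acted upon is a group homomorphism and hence is pinned down by its values on each root subgroup $X_{ij}$ with $(i,j)\in\cR$. By Corollary~\ref{3.13}, $x_{ij}(\alpha)$ contributes the factor $\theta(-\alpha A_{ij})$, so $\tau_A$ depends solely on the entries $\{A_{ij}\,|\,(i,j)\in\cR\}$. I would then inspect case by case which of these entries can be non-zero: positions above a main condition are forced to zero by the template property, positions in a zero column are zero by construction, and on a $J$-normal row every remaining position of $\cR$ is either a main condition, a normal supplementary condition, or else lies strictly to the left of a main condition or in a zero column (both of which again force the entry to vanish by the template property). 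Consequently, if $A$ and $B$ agree on values at main conditions and at normal supplementary conditions, then $\tau_A=\tau_B$.

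For the final claim, two distinct templates sit in distinct right orbits by the uniqueness clause of Corollary~\ref{unique one}, so $\cO_A^r\neq\cO_B^r$. On the other hand, Definition~\ref{defofpstab} yields $\C\cO_A^r\cong\Ind_{U_\cR}^{U_J}\C_{\tau_A}$ and $\C\cO_B^r\cong\Ind_{U_\cR}^{U_J}\C_{\tau_B}$, and the coincidence of the inducing subgroup and of the inducing character (granted by the previous paragraph, since by hypothesis $A$ and $B$ differ only in $\cY$-conditions) immediately produces the module isomorphism $\C\cO_A^r\cong\C\cO_B^r$.

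The main obstacle I anticipate is the case analysis in the middle paragraph: verifying that every position of $\cR$ at which $A$ might be non-zero is recorded by the main and normal-supplementary data. The subtle point is handling positions in a $J$-normal row that simultaneously lie above a main condition or in a zero column; in each such overlap the defining properties of a template force the entry to vanish, so the contribution to $\tau_A$ is trivial and the split of $\cR$ into ``relevant'' versus ``irrelevant'' positions for $\tau_A$ remains unambiguous.
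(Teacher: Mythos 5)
Your proof is correct and follows the same route as the paper's (very brief) proof: the first claim reduces to the observation that $\cR(\p,J)$ depends only on $\p$ and $J$, and the second to identifying which positions of $\cR$ can carry nonzero entries of a template. The paper's proof is one sentence, noting directly that $\cY$-conditions lie outside $\cR$; your case analysis establishes the same fact from the other side, by listing what \emph{can} be nonzero in $\cR$ (main conditions, normal supplementary conditions) rather than showing what \emph{cannot} ($\cY$-conditions). Both amount to $\cY(\p,J)\cap\cR(\p,J)=\emptyset$. Your treatment of the final claim --- distinct templates lie in distinct orbits by Corollary~\ref{unique one}, and equal inducing data give $\Ind_{U_\cR}^{U_J}\C_{\tau_A}\cong\Ind_{U_\cR}^{U_J}\C_{\tau_B}$ --- is the argument the paper leaves implicit, and it is correct. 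One small point worth making explicit: the middle assertion needs $A$ and $B$ to agree not just on normal supplementary conditions but also on the values at main conditions (since $X_{ij}$ for $(i,j)\in\p$ contributes $\theta(-\alpha A_{ij})$); you handle this in passing, and the paper sidesteps it by phrasing the conclusion only for templates that differ purely in $\cY$-conditions.
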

\begin{proof}
The first statement follows directly from \ref{cR}. If $A$ and $B$  only differ on $\cY$-conditions, then
 $U_\cR=\Pstab_{U_J} ([A])=\Pstab_{U_J} ([B])$ acts on $[A]$ and $[B]$ from right by the same linear character since $\cR$ does not contain the non normal supplementary conditions. \end{proof}

Thus to classify the right orbit modules, we only need to deal with those modules generated by normal templates. But the trouble is, that the converse of \ref{yiso} is not true. Two different normal templates in $\V$ can still generate isomorphic right orbit modules.  The following example describes a situation, where we can remove a non-zero normal supplementary condition from a template and still obtain an isomorphic orbit module:

\begin{Example}\label{basiceg}
Let $\alpha, \beta\in \F_q^*=\F_q\setminus{\{0\}}$, $J=\Phi^+\setminus \{\text{column $k$}\}$ and let
\begin{center}
\begin{picture}(250,160)
\put(-50,80){$[A]=$}
\put(170,80){be a normal template in $V_J$, then}

\put(0,160){\line(1,0){160}}
\put(0,160){\line(1,-1){160}}
\put(160,160){\line(0,-1){160}}

\put(20,140){\line(1,0){80}}
\put(102,137){$z_2$}
\put(105,130){\line(0,-1){45}}
\put(102,75){$\beta$}
\put(105,72){\line(0,-1){17}}
\put(7,135){$i$}
\put(100,43){$l$}

\put(69,70){$k$}

\put(50,110){\line(1,0){80}}
\put(132,107){$z_1$}
\put(135,100){\line(0,-1){15}}
\put(132,76.8){$\alpha$}
\put(135,72){\line(0,-1){46}}
\put(32,107){$j$}
\put(126,16){$m$}

\put(80,80){\line(0,1){80}}
\multiput(75,85)(0,9){8}{\line(1,1){10}}
\multiput(80,80)(6,0){4}{\line(1,0){4}}
\multiput(109,80)(6,0){4}{\line(1,0){4}}
\end{picture}
\end{center}
 $x_{ij}(-z_1\beta \alpha^{-1}z_2^{-1}).[A].x_{lm}(\beta\alpha^{-1})=[B]$ where $B_{kl}=0$ and $B_{ab}=A_{ab}$ for all $(k,l)\neq(a,b)\in J$. Thus $[B]$ is also a normal template in $V_J$ and the right orbit module generated by it is isomorphic to $\C \cO_A^r$.
\end{Example}

In general if $[A]$ is a normal template, then all lidempotents in $\cO_A^l$ coincide with $[A]$ at all normal rows, since entries there cannot be changed by restricted row operations. Thus if $[B]\in \cO_A^l\cap \cO_A^r$, then $B$ and $A$ coincide at all normal rows and are distinct at most at non normal main hook intersections. Whereas the entries in $J$-normal positions cannot be changed by restricted row operations, this can be done by applying restricted column operations. In the following guiding example we illustrate how homomorphisms between orbit modules can be exhibited. 

\begin{Example}\label{bigeg}
In the following guiding example we demonstrate the basic ideas, how to detect for a normal template $[A]\in \V$  ``enough" elements in the intersection $\cO_A^r\cap \cO_A^l$ together with templates $[B]\neq [A]$ in $\cO_A^{\bi}$ such that equations \ref{orbitsizelr} are satisfied. 

To simplify notation for $[A]$ we have labelled the rows and columns containing non-zero entries in the illustration below simply by consecutive natural numbers. Thus let the normal template $[A]$ with main conditions $\p$ be given as follows:

\quad
\begin{center}
\begin{picture}(350,195)
\put(0,100){$[A]=$}
\put(0,200){\line(1,0){200}}

\put(0,200){\line(1,-1){200}}
\put(200,200){\line(0,-1){200}}

\put(10,190){\line(1,0){120}}
\put(130,190){\line(0,-1){120}}
\put(0,185){1}
\put(130,190){\circle*{2}}
\put(132,188){$z_1$}

\put(30,170){\line(1,0){120}}
\put(150,170){\line(0,-1){120}}
\put(20,165){2}
\put(150,170){\circle*{2}}
\put(152,168){$z_2$}

\put(50,150){\line(1,0){130}}
\put(180,150){\line(0,-1){130}}
\put(40,145){3}
\put(180,150){\circle*{2}}
\put(182,148){$z_4$}

\put(65,135){\line(1,0){100}}
\put(165,135){\line(0,-1){100}}
\put(55,130){4}
\put(165,135){\circle*{2}}
\put(166,133){$z_3$}

\put(115,200){\line(0,-1){115}}
\multiput(115,85)(8,0){11}{\line(1,0){5}}
\multiput(110,90)(0,7.6){14}{\line(1,1){10}}
\put(105,80){7}

\put(95,200){\line(0,-1){95}}
\multiput(95,105)(8.3,0){13}{\line(1,0){5}}
\multiput(90,110)(0,8){11}{\line(1,1){10}}
\put(85,100){6}

\put(80,200){\line(0,-1){80}}
\multiput(80,120)(8.2,0){15}{\line(1,0){5}}
\multiput(75,125)(0,8){9}{\line(1,1){10}}
\put(70,115){5}

\put(150,120){\circle*{2}}
\put(151,114){\small $\alpha$}

\put(150,105){\circle*{2}}
\put(151,96){\small $\beta$}

\put(150,85){\circle*{2}}
\put(151,78){\small $\gamma$}

\put(165,120){\circle*{2}}
\put(166,112){\small $\delta$}

\put(165,105){\circle*{2}}
\put(166,98){\small $\epsilon$}

\put(165,85){\circle*{2}}
\put(166,76){\small $\zeta$}

\put(180,120){\circle*{2}}
\put(181,114){\small $\eta$}

\put(180,105){\circle*{2}}
\put(181,98){\small $\mu$}

\put(180,85){\circle*{2}}
\put(181,77){\small $\nu$}

\put(125,60){8}

\put(145,39){9}

\put(158,23){10}

\put(173,9){11}

\put(230,150){$\p=\{(1,8), (2,9), (4,10), (3,11)\}$}

\put(230,130){row 5, 6, 7 are $J$-normal}

\put(230,100){hook intersections on column 8:}

\put(255,80){$\{(2,8), (3,8), (4,8)\}$}
\end{picture}
\end{center}

Obviously if $\alpha=\beta=\gamma=0$, then $[B]=[A].x_{89}(-z_2^{-1}\omega)=x_{12}(-z_1^{-1}\omega).[A]$ coincides with $[A]$  except at position $(2,8)$ with $B_{28}=\omega$ for all $\omega\in \F_q$. So $[B]=[B_\omega]\in \cO_A^r\cap\cO_A^l$,  i.e. we have $q$ distinct lidempotents in $\cO_A^r\cap \cO_A^l$.
\end{Example}

 Suppose in the picture above there exist $\rho, \sigma\in \F_q$ with 
\begin{equation}\label{lincomb}
\begin{pmatrix}\alpha\\ \beta\\ \gamma\end{pmatrix}=
\sigma\begin{pmatrix}\delta\\ \epsilon\\ \zeta\end{pmatrix}+\rho\begin{pmatrix}\eta\\ \mu\\ \nu\end{pmatrix},
\end{equation}
then one checks easily that for all $\omega\in \F_q$
\begin{eqnarray*}
&&[A].x_{89}(-z_2^{-1}\omega)x_{8,10}(z_2^{-1}\omega\sigma)x_{8,11}(z_2^{-1}\omega\rho)\\&=&
x_{12}(-z_1^{-1}\omega)
x_{13}(z_1^{-1}z_2^{-1}\omega\sigma z_3)x_{14}(z_1^{-1}z_2^{-1}\omega\rho z_4).[A].
\end{eqnarray*}
Thus if we let $\omega$ run through $\F_q$ we have found  $q$ many elements in $\cO_A^r\cap \cO^l_A$.

But there is more about this construction, it produces as well isomorphisms between different orbit modules as we shall explain now:

Consider the $3\times 2$-matrix 
$B=\begin{pmatrix}
\delta     &\eta\\
\epsilon  &  \mu\\
\zeta       &  \nu
\end{pmatrix}$. 
Let $a,b\in \F_q$ and act by $u_1=x_{8,10}(-a)x_{8,11}(-b)$ on $[A]$. Then $[C]=[A].u_1$ satisfies: $C$ coincides everywhere with $A$ except in column $8$ which is given by $C_{18}=z_1, C_{38}=bz_4, C_{48}=az_3$ and 
\[
\begin{pmatrix}
C_{58}\\C_{68}\\C_{78}
\end{pmatrix}=
\begin{pmatrix}
A_{58}\\A_{68}\\A_{78}
\end{pmatrix}
+B\begin{pmatrix}a\\b\end{pmatrix} 
\]
Suppose $B$ has rank two and the first two rows of $B$  are linearly independent. Then given any $\lambda, \rho\in\F_q$ we can always find $a,b\in \F_q$ such that $C_{58}=\lambda$ and $C_{68}=\rho$. Moreover $C_{78}$ depends then linearly on $\lambda$ and $\rho$. Thus once $\lambda, \rho$ are chosen, then $C_{78}$ is fixed as well.
Now acting by $u_2=x_{13}(z_1^{-1}bz_4)x_{14}(z_1^{-1}az_3)$ from the left on $[C]$ will set positions $(3,8)$ and $(4,8)$ back to zero. We see therefore:
\[
u_2.[A].u_1=[\tilde A]
\]
where $[\tilde A]\in \hat V$ is the template which agrees with $[A]$ at all positions except $3$ supplementary normal conditions at positions $(5,8),(6,8),(7,8)$, which are given as $\lambda, \rho$ and the third term dependent on $\lambda, \rho$. Then $\C\cO_A^r\cong \C\cO_{\tilde A}^r$ and by the free choices of $\lambda$ and $\rho$, we have $q^2$ many different right orbits shown to be isomorphic.

Recall that by \ref{orbitsizelr}, \ref{original notation for right}  and \ref{HomSpace}
\begin{equation}\label{5.5}
\kappa|\cO_A^r\cap \cO_A^l|=|\cO_A^r|=q^m
\end{equation}

where $\kappa$ is the number of right orbits contained in  the biorbit $\cO_A^{\bi}$ generated by $[A]$, a $\C$-basis of $\End_{\C U_J}(\C\cO_A^r)$ is given by $\cO_A^r\cap \cO_A^l$ and $m$ is the number of $J$-places $\Pl(\p,J)$ of $[A]$ which in turn are precisely the positions on the main hook arms contained in $J$.  As we shall see there is a local  "mainhook-wise" version of formula \ref{5.5} which multiplied up over all main hooks gives the global formula \ref{5.5}. 

We observe that in example \ref{bigeg} the $J$-places on the hook arm centered at $z_1$ are the positions $\neq (1,5), (1,6), (1,7)$  and are in bijection by the flip map \ref{flip} with the non normal positions on the hook leg centered at the main condition $(1,8)$. These are all positions on column $8$ south of $(1,8)$ except the normal positions $(5,8), (6,8)$ and $(7,8)$. All of those, except three main hook intersections $(2,8), (3,8)$ and $(4,8)$, are $\cY$-conditions and hence contribute $q^a$ to the local $\kappa$, where $a$ is the number of these $\cY$-conditions in column $8$. The flipped positions $(1,2),(1,3)$ and $(1,4)$ to the main hook intersections $(2,8), (3,8)$ and $(4,8)$ respectively are $J$-places in $\Pl(\p,J)$ which contribute a summand of $3$ to $m$ hence a factor $q^3$ to $q^m$. 

Now the constellation of the normal supplementary conditions to the right of column 8 denoted by Greek letters determines hook intersection $(1,2)$ to contribute a factor $q$ to automorphisms of $\cO_A^r$, that is to $\cO_A^r\cap \cO_A^l$. The normal positions $(5,8)$ and $(6,8)$ in exchange for the remaining two hook intersections (2,8) and (3,8) not yet accounted for, contribute factors $q$ to $\kappa$ in \ref{5.5}. Thus for the $J$-places in row 1, that is the number of left $J$-places in column $8$ equation \ref{5.5} reads now
\[
q^{|\cY(\p,J)|_8}\cdot q^2 \cdot q=q^{|\text{left} J\text{-places}|_8}
\]
where $|\quad|_8$ denotes the local quantities on column $8$.

How this ``exchange'' of some (non normal) main hook intersections with normal supplementary positions happens is explained in the next section.

\section{The $U_J$-biorbits on $\hat V_J$}
Recall that $M_n(q)= M_n(\F_q)$ denotes the $\F_q$-space of $n\times n$-matrices with coefficients in $\F_q$. Similarly we shall denote the $\F_q$-space of $a\times b$-matrices over $\F_q$ simply by $M_{a\times b}(q)$  for any natural numbers $a,b$.

Let again $J\subseteq \Phi^+$ be column closed, $U=U_J, V=V_J$ and let $[A]\in \hat V$. Recall (see  \ref{3.11}) that
$[A].u=[\pi_{_J}(Au^{-t})] \quad \text{for } u\in U$,
and hence, by \ref{3.8}, the root subgroups $X_{bj}$, acting from the right on $[A]$, change entries in column $b$ of $[A]$ only.  Thus if 
\begin{equation}\label{6.1}
g=\prod_{k=b+1}^n x_{bk}(-\alpha_k) \in X_{b,b+1}\times X_{b,b+2}\times\ldots \times X_{bn},
\end{equation}
 with $ \alpha_{b+1}, \ldots, \alpha_n\in \F_q  \text{ and } \alpha_k=0 \text{ for } (b,k)\notin J$, we have by \ref{3.11}
\begin{equation}\label{6.2}
[A].g=[\pi_{_J}(Ag^{-t})] =[\pi_{_J}(A\cdot\prod_{k=b+1}^n x_{kb}(\alpha_k))].
\end{equation}
 Let $\tilde v=(0,\ldots,0,\alpha_{b+1}, \ldots,\alpha_n)^t\in \F_q^n$
and $\tilde w=A\tilde v\in \F_q^n$ then by direct inspection we have
$Ag^{-t}=\tilde B\in M_n(q)$, where $\tilde B$ coincides with $A$ on all columns except the $b$-th one which is given as $\tilde B_b=A_b+\tilde w$. Here $\tilde B_b$ respectively $A_b$ denotes the $b$-th column of $\tilde B$ and $A$ respectively. Now $\tilde w=(\beta_1,\cdots,\beta_n)^t\in \F_q^n$ and hence $\pi_{_J}(\tilde B)=B\in V$, where
\[
B_b=A_b+(\beta_1, \beta_2,\cdots,\beta_{b-1},0,0, \cdots, 0)^t.
\]
But for $1\leqslant l \leqslant n$ we have $\beta_l=\sum_{t=b+1}^n A_{lt}\alpha_t$. Thus we shall focus on a $(b-1)\times(n-b)$-submatrix $A(b)$ obtained from $A$ by deleting columns $1,\ldots, b$ and rows $b,\ldots, n$ from $A$ that is:

\begin{center}
\begin{picture}(120,120)
\put(0,120){\line(1,0){120}}
\put(120,0){\line(0,1){120}}
\put(0,120){\line(1,-1){120}}


\multiput(70,50)(5.7,0){9}{\thicklines\color{blue}\line(1,0){4}}
\multiput(70,50)(0,6){12}{\thicklines\color{blue}\line(0,1){4}}
\put(120,120){\thicklines\color{blue}\line(0,-1){70}}

\put(120,120){\thicklines\color{blue}\line(-1,0){50}}

\put(84,80){\thicklines\color{blue}$A(b)$}
\put(66,40){$b$}
\end{picture}
\end{center}

Now suppose in addition that $[A]\in \hat V$ is a template with main conditions $\p\subseteq J$. Let again $1\leqslant b\leqslant n$ and suppose column $b$ belongs to $J$. Since $[A]$ is a template, only columns containing a main condition have non-zero entries, and hence the same is true for $A(b)$. Let
\begin{equation}\label{6.3a}
\Gamma_b=\{(i,j)\in J\,|\, 1\leqslant i<b<j\leqslant n\},
\end{equation}
and  set $\Gamma_b\cap \p=\{(i_1, j_1), \ldots, (i_l, j_l)\,|\, b<j_1<\cdots<j_l\}$. Then the only non zero columns of $A(b)$ are columns $j_1, \ldots, j_l$ and contain main conditions which are in pairwise different rows. As a consequence, omitting all columns $c\notin \{j_1, \ldots, j_l\}$ from $A(b)$ produces $(b-1)\times l$-matrix $\tilde A(b)$ of rank $l$. Note that for $b<k\leqslant n$ with $k\notin \{j_1, \ldots, j_l\}$, $X_{bk}$ acts as identity on $[A]$ adding a multiple of the zero column $k$ to column $b$. Thus in \ref{6.2} we may assume that 

\begin{equation}\label{6.3}
g=\prod_{\nu=1}^l x_{bj_\nu}(-\alpha_\nu) \quad\text{ with } \alpha_{1}, \ldots, \alpha_l\in \F_q.
\end{equation}

Matrix multiplication from the left by $\tilde A(b)$ defines an injective $\F_q$-homomorphism from $\F_q^l$ into $\F_q^{b-1}$ whose image is spanned by the column vectors of $\tilde A(b)$.
Now we may reformulate \ref{6.2} as:

\begin{Remark}\label{6.4}
Define $w=\tilde A(b)v\in \F_q^{b-1}$ with $v=(\alpha_{1}, \cdots, \alpha_l)^t\in \F_q^{l}$. Then 
\[
[A].g=[B]\in \hat V
\]
where $B$ coincides with $A$ at all positions except those of column $b$, and the $i$-th entry of column $b$ for $1\leqslant i\leqslant b-1$  is given as $\tilde A_{ib}+w_i$.
\end{Remark}

As in \ref{5.5}  let $|\cO_A^r|= q^m$, where  $m$ is the number  $|\Pl(\p, J)|$ of $J$-places of $\p$ given as number of positions in $J$ of the main hook arms $^J\!h_{ij}^a, (i,j)\in \p$ by \ref{jplaces}. The flip maps $f_{ij}$ of \ref{flip} attached to $(i,j)\in \p$ define bijections between the $J$-places of $^J\!h_{ij}^a$ and the non $J$-normal positions of $^J\!h_{ij}^l =h_{ij}^l $.

Now all positions on $h_{ij}^l$ which are not $J$-normal and are not main hook intersections are $\cY$-positions (see \ref{jnormalpattern} and \ref{hookintersection}), each contributing a factor $q$ to $\kappa$ in \ref{5.5}. 


If all main hook intersections on $h_{ij}^l$ are normal, $f_{ij}$ is a bijection between the $J$-places on $^J\!h_{ij}^a$ and the $\cY$-conditions on $h_{ij}^l$, and we are done in this case. Hence assume that we deal with a column $b$ of $[A]$ containing at least one non normal main hook intersection. Then column $b$ contains a main condition of $[A]$, say $(a,b)\in \p$. 
In view of \ref{yiso} we may assume that $[A]$ is a normal template. Thus besides the main conditions for $[A]$ all non zero entries of $A$ are in $J$-normal rows and hence on normal supplementary positions, see \ref{jnormalpattern}.


Recall that $(a,b)\in \p$ is the main condition on column $b$ of $[A]$, and that column $b$ contains in addition at least one non normal hook intersection. Thus there exists $(c,d)\in \p$ with $a<c<b<d$ and column $c$ contained in $J$. Define
\begin{equation}\label{6.5}
\Gamma_{ab}=\{(i,j)\in J\,|\, a<i<b<j\}\subseteq \Gamma_b
\end{equation}
where $\Gamma_b$ is defined as in \ref{6.3a}. Then

\begin{equation}
\begin{picture}(120,120)
\put(0,120){\line(1,0){120}}
\put(120,0){\line(0,1){120}}
\put(0,120){\line(1,-1){120}}

\put(20,100){\line(1,0){45}}

\put(70,50){\line(1,0){50}}
\put(70,50){\line(0,1){45}}
\put(67,97){$z$}
\put(10,97){$a$}

\put(120,95){\thicklines\color{blue}\line(0,-1){41}}
\put(75,95){\thicklines\color{blue}\line(0,-1){41}}

\put(120,95){\thicklines\color{blue}\line(-1,0){45}}
\put(120,54){\thicklines\color{blue}\line(-1,0){45}}

\put(86,70){\thicklines\color{blue}$\Gamma_{ab}$}
\put(66,40){$b$}
\put(105,50){\line(2,-1){30}}
\put(135,28){row $b\ni g$}
\put(72,105){\line(2,1){50}}
\put(125,130){main condition}
\end{picture}
\end{equation}

Let $(i,j)\in \p\cap \Gamma_b$. If $i<a$, then $^J\!h_{ab}\cap\!\,^J\!h_{ij}=\emptyset$  and $X_{bj}$ acting on $[A]$ adds a multiple of column $j$ (containing the main condition $(i,j)$) to column $b$. Thus if we act by $x_{bj}(\alpha)$ with $\alpha\neq 0$, we insert $-\alpha A_{ij}$ into the position $(i,b)$ to the north of the main condition $(a,b)$, which can be possibly removed only on the prize of inserting an even more northern non-zero entry. This suggests that we better do not use the subgroup $X_{bj}$ in our consideration. 

\begin{Setup}\label{6.8}
$[A]\in \hat V$ is normal with $\main A=\p$ and $(a,b)\in \p$. 
$\Gamma_b, \Gamma_{ab}\subseteq J$ are defined as in \ref{6.3a} and \ref{6.5}. Since we may ignore main conditions in rows $< a$ anyway, we may set  $\p\cap \Gamma_{ab}=\{(i_1, j_1),\ldots,(i_l, j_l)\}$ again. Suppose $\p\cap \Gamma_{ab}$ contains  $r$ many normal main conditions, then we assume that $(i_1, j_1), \ldots, (i_r, j_r)$ are those. 
We define $\tilde A(a,b)\in M_{(b-1)\times l}(q)$ to be the matrix obtained from  $\tilde A(b)$  by deleting all columns containing  a main condition not contained in $\Gamma_{ab}$. Note that the rank of $\tilde A(a,b)$ is $l$ since the $l$ columns contain each a main condition with non-zero entry in $l$ different rows and all entries to the north of main conditions are zero. Moreover row 1 up to row $a$ of $\tilde A(a,b)$ are zero rows. So multiplication by $\tilde A(a,b)$ defines an injective $\F_q$-homomorphism from $\F_q^l$ into $\F_q^{b-1}$. 
For $v=(\alpha_1,\ldots,\alpha_l)^t\in \F_q^l$ let $g(v)=\prod_{\nu=1}^l x_{bj_\nu}(-\alpha_\nu)\in U$. Acting by $g(v)$ from the right on $[A]$ adds $\tilde A(a,b)v=(\beta_1,\ldots,\beta_{b-1})\in \F_q^{b-1}$ to column $b$ \big(from position $(1,b)$ to position $(b-1,b)$\big).\hfill$\square$
\end{Setup}

\begin{Prop}\label{6.9}
Let $a<k_1<\cdots<k_l<b$ be such that rows $k_1,\cdots, k_l$  of $\tilde A(a,b)$ are linearly independent and hence are a basis of the row space of $\tilde A(a,b)$. Then for each choice $\alpha_1, \ldots, \alpha_l\in \F_q$ there exists precisely one column vector $v=(\beta_1, \ldots, \beta_{b-1}, 0,\ldots,0)^t\in \F_q^n$ with $\beta_{k_\nu}=\alpha_\nu, \nu=1, \dots, l$ and one lidempotent $[A^v]\in \cO_A^r$ which coincides at all positions with $[A]$ besides those of the $b$-th column which is given by $v$. Moreover for $a<t<b$, with $ t\notin \{k_1, \ldots,k_l\}$ there exist unique elements $\lambda_1, \ldots, \lambda_l$ such that row $t$ of $\tilde A(a,b)$ is the linear combination of rows $k_1, \ldots, k_l$ of $\tilde A(a,b)$ with coefficients $\lambda_1, \ldots, \lambda_l$. Then the entry $\beta_t$ of column $b$ of $[A^v]$ at position $(t,b)$ is given as 
\begin{equation}\label{6.10}
\beta_t=A_{tb}+\sum_{\nu=1}^l \lambda_\nu (\beta_{k_\nu}-A_{k_\nu b}),
\end{equation}
where $A_{i b}$ is the entry at position $(i, b)$ in $[A]$. 
\end{Prop}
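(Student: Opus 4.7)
The plan is to reduce the statement to a linear-algebra question on the matrix $\tilde A(a,b)$, using the description of the right action of elements in the root subgroups $X_{bj_\nu}$ established in \ref{6.4} and \ref{6.8}. More precisely, by \ref{6.8} the element $g(w) = \prod_{\nu=1}^l x_{bj_\nu}(-\mu_\nu)\in U$ (where $w=(\mu_1,\dots,\mu_l)^t$) acts on $[A]$ by producing the lidempotent $[A].g(w) = [A^{g(w)}]\in \cO^r_A$ which agrees with $[A]$ in every column except column $b$, whose upper part $(1,\dots,b-1)$ is $A_b + \tilde A(a,b)\,w$. Since the main conditions contributing to $\tilde A(a,b)$ all lie in rows $i$ with $a<i<b$, rows $1,\dots,a$ of $\tilde A(a,b)$ are identically zero, so positions $(t,b)$ with $t\le a$ of the new column are forced to equal $A_{tb}$ (hence $0$ for $t<a$ and the main condition value $z=A_{ab}$ for $t=a$). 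Thus the set of columns $v$ realizable in this way is exactly the affine subspace $A_b + \im\tilde A(a,b)$ of $\F_q^{b-1}$, and any further zero rows below row $b-1$ are automatic.

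The key step is then the passage from choosing $w$ to prescribing the entries $\beta_{k_\nu}$. Let $\bar A\in M_{l\times l}(q)$ denote the submatrix of $\tilde A(a,b)$ consisting of the rows indexed by $k_1<\dots<k_l$. By hypothesis these $l$ rows are linearly independent, so $\bar A$ is invertible. For any $\alpha_1,\dots,\alpha_l\in\F_q$ the linear system
\[
\bar A\,w = \bigl(\alpha_1-A_{k_1b},\;\alpha_2-A_{k_2 b},\;\dots,\;\alpha_l-A_{k_l b}\bigr)^t
\]
therefore has a unique solution $w\in\F_q^l$. Setting $v := A_b + \tilde A(a,b)\,w$ (padded by zeros in positions $\ge b$) and $[A^v] := [A].g(w)\in \cO_A^r$ yields existence of a lidempotent with the required properties. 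Uniqueness of $v$ (and hence of $[A^v]$) is immediate: any column $v$ compatible with the hypotheses must lie in the affine space $A_b + \im\tilde A(a,b)$, which is parametrized injectively by $w\in\F_q^l$ because $\tilde A(a,b)$ has rank $l$; prescribing the entries $\beta_{k_\nu}=\alpha_\nu$ determines $w$ uniquely through the invertible system above.

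Finally, formula \ref{6.10} drops out of the same linear algebra. For $a<t<b$ with $t\notin\{k_1,\dots,k_l\}$, write row $t$ of $\tilde A(a,b)$ as the unique linear combination $\sum_{\nu=1}^l\lambda_\nu(\text{row }k_\nu)$; this is possible precisely because rows $k_1,\dots,k_l$ span the whole row space of $\tilde A(a,b)$, which has rank $l$, and the coefficients are unique since these rows are independent. Applying this relation to the $\F_q^{b-1}$-vector $\tilde A(a,b)\,w$ gives
\[
(\tilde A(a,b)w)_t = \sum_{\nu=1}^l\lambda_\nu(\tilde A(a,b)w)_{k_\nu} = \sum_{\nu=1}^l\lambda_\nu(\alpha_\nu - A_{k_\nu b}),
\]
and therefore $\beta_t = A_{tb} + \sum_{\nu=1}^l\lambda_\nu(\beta_{k_\nu}-A_{k_\nu b})$ as required. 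There is really no serious obstacle here: the content is packaged in the preparatory Setup \ref{6.8}, and once the invertibility of $\bar A$ is available everything else is bookkeeping. The one place to be careful is to keep track of which positions of column $b$ are forced to remain equal to $A_{tb}$ (those with $t\le a$) versus which are freely controlled (those with $t=k_\nu$) versus which depend linearly on the latter (those with $a<t<b$, $t\notin\{k_1,\dots,k_l\}$), in order to see that no row of $v$ has been missed.
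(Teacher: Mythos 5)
Your proposal is correct and takes essentially the same route as the paper, which dismisses Proposition \ref{6.9} with ``an easy exercise in linear algebra using \ref{6.4}''. You spell out that exercise: $g(w)$ changes only column $b$, the change is $\tilde A(a,b)w$, the full-rank hypothesis makes the $l\times l$ submatrix $\bar A$ of rows $k_1,\dots,k_l$ invertible so that $w$ is uniquely determined by the prescribed $\beta_{k_\nu}$, and the dependence formula \ref{6.10} falls out of writing row $t$ as a linear combination of rows $k_1,\dots,k_l$. One cosmetic remark: your uniqueness step asserts without argument that any admissible column $v$ must lie in $A_b+\im\tilde A(a,b)$; this relies on the (implicit in the paper's setup, discussed heuristically before \ref{6.8}) restriction to root subgroups $X_{bj_\nu}$ with $(i_\nu,j_\nu)\in\Gamma_{ab}$, since using a main condition $(i,j)$ with $i<a$ would insert a nonzero entry north of $(a,b)$. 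Since the paper itself leaves this at the same level of detail, the proposal matches the paper's intended proof.
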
\
\begin{proof}
This is an easy exercise in linear algebra using \ref{6.4}.
\end{proof}

We call the positions $\{(k_\nu, b)\,|\,1\leqslant \nu \leqslant l\}$ in the above proposition  in column $b$ of $A$ {\bf free positions} (of column $b$). One obvious choice for those consists of all  hook intersections on column $b$. Indeed, if $[A]$ is a normal template, for which all normal supplementary conditions are $0$, that is $\supp(A) = \main[A]$,  this is the only choice of free positions in column $b$. Our strategy will be, to replace some (indeed as many as possible) of the non normal hook intersections by positions $(i,b)$ such that row $i$ is normal containing non-zero supplementary but no main condition in $\Gamma_{ab}$.  In \ref{original notation for right} we have seen, that the lidempotents $[D]\in \cO_A^r$  may be described by choosing freely arbitrary entries $D_{ij}\in \F_q$ for  positions $(i,j)\in \Pl(\p, J)$, then all other entries are linearly dependent from the chosen ones. The previous Proposition \ref{6.9} may be considered as  a local version of this, namely that dependent positions in column $b$ of lidempotents in $\cO_A^r$ depend really only on the entries at free positions $(a,b)\in \Pl(\p,J)$ in column $b$, that is one can do the calculation of the dependent entries column wise.

\begin{Remark}\label{6.11}
We remark in passing, that \ref{6.9} has the following curious consequence: If  $\cC_i$ denotes for each $2\leqslant i \leqslant n$  the set of vectors occurring as $i$-th column vector in some $[D]\in \cO_A^r$, then for all choices $(c_2, \cdots, c_n)\in \cC_2\times \cdots\times  \cC_n$ the lidempotent $[C]$ is contained in $\cO_A^r$, where  $c_i$ is the $i$-th column vector of $C$. In particular
\begin{equation*}
q^{|\Pl(\p,J)|}=|\cO_A^r|=\prod_{i=2}^n |\cC_i|
\end{equation*} 
\end{Remark}

\begin{Defn}\label{6.12}
Keep the notation of \ref{6.8}. Let $B$ be the submatrix arising from $\tilde A(a,b)$ by deleting  rows $1, \ldots, a$ and all non normal rows. Let $k$ be the rank of $B$, then $k\leqslant l$ and if rows $n_1, \ldots, n_k$ in $B$ are linearly independent, they are linearly independent as well as rows of $\tilde A(a,b)$. Thus we may choose rows $n_1, \ldots, n_k$ of $B$ as $\F_q$-basis of the row space of $B$.  Recall from the Set up \ref{6.8} that main conditions $(i_1, j_1), \ldots, (i_r, j_r)$  are the normal ones, hence rows  $i_1,i_2,\cdots, i_r$ of $B$ are linearly independent, since each of them contains a main condition, which sits on different columns. Thus we choose
first rows  $i_1,i_2,\cdots, i_r$ and then $k-r$ many  rows from the remaining rows of $B$, namely rows $n_{r+1}, \cdots, n_k$, such that these together with the rows $i_1,i_2,\cdots, i_r$ are linearly independent rows in $\tilde B$. We obtain a $(k\times l)$-submatrix $\tilde B$ of $B$ such that the row space of $\tilde B$ is the same as the row space of $B$ and in particular $\rank(\tilde B)=\rank(B)=k $. Therefore we have in view of \ref{6.9}:
\end{Defn}

\begin{Cor}\label{6.13}
The normal positions $(i_1,b), \ldots, (i_r,b), (n_{r+1},b), \ldots,(n_k,b)$ of column $b$ of $[A]$ can be chosen to be free positions, and $k$ is the maximal number of free normal positions in column $b$. \hfill $\square$
\end{Cor}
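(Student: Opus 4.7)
The plan is to extract this as a direct consequence of Proposition \ref{6.9} together with the rank-based construction carried out in Definition \ref{6.12}. There are really two things to verify: the \emph{existence} of a choice of $l$ linearly independent rows of $\tilde A(a,b)$ realizing the claimed $k$ normal positions as free positions, and the \emph{maximality} of $k$.

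For existence, I would start from the $k$ rows selected in \ref{6.12}, namely $i_1,\ldots,i_r, n_{r+1},\ldots,n_k$; by construction these are linearly independent rows of the submatrix $\tilde B$ and hence of $B$, and since $B$ is obtained from $\tilde A(a,b)$ by deleting rows, they are linearly independent as rows of $\tilde A(a,b)$. Because $\tilde A(a,b)$ has rank $l$ (as noted in \ref{6.8}), this linearly independent set of size $k\le l$ extends to a full system of $l$ linearly independent rows $k_1<k_2<\ldots<k_l$ of $\tilde A(a,b)$ with $a<k_\nu<b$; the $l-k$ additional rows can be chosen arbitrarily from the non-normal rows (they exist because adding any normal row would contradict $\rank B=k$, so the extension can in fact be taken entirely among non-normal rows, but this refinement is not needed here). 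Proposition \ref{6.9} then applies to this choice of $l$ rows, showing that the positions $(k_1,b),\ldots,(k_l,b)$ are free positions in column $b$. In particular, the $k$ normal positions $(i_1,b),\ldots,(i_r,b),(n_{r+1},b),\ldots,(n_k,b)$ are free.

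For maximality, suppose for contradiction that some choice of free positions in column $b$ contains $k+1$ normal entries, say at rows $a<m_1<\ldots<m_{k+1}<b$. By the hypothesis of \ref{6.9} the corresponding rows of $\tilde A(a,b)$ must be linearly independent. But all the $m_\sigma$ are normal rows strictly between $a$ and $b$, so these are $k+1$ linearly independent rows of the submatrix $B$ defined in \ref{6.12}, contradicting $\rank B=k$. Hence $k$ is the maximum.

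The only mild obstacle is bookkeeping: one has to check that the rows $i_1,\ldots,i_r,n_{r+1},\ldots,n_k$ used in the existence argument really are linearly independent as rows of $\tilde A(a,b)$ (not just of $B$ or $\tilde B$), and that the extension to a basis of the row space of $\tilde A(a,b)$ is indeed compatible with the condition $a<k_\nu<b$ required in \ref{6.9}; both points are immediate since $B$ is a row-submatrix of $\tilde A(a,b)$ and all rows of $\tilde A(a,b)$ are indexed between $1$ and $b-1$ with zero rows above row $a$, so the existing non-zero rows automatically sit in the range $(a,b)$.
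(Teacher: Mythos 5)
Your proof is correct and is essentially the argument the paper intends: the corollary carries an immediate $\square$ because it is read off from the construction in Definition \ref{6.12} together with Proposition \ref{6.9}, and your write-up (extend the $k$ independent rows of $B$ to an $l$-row basis of $\tilde A(a,b)$, invoke \ref{6.9} for freeness, then use $\rank B=k$ for maximality) simply makes that implicit reasoning explicit. The bookkeeping remarks about rows of $\tilde B$, $B$, and $\tilde A(a,b)$ being the same vectors, and about the extension necessarily avoiding normal rows, are accurate.
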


Thus there remain $l-k$ many free positions on column $b$, which have to be non normal and therefore are non normal main hook intersections. How to choose those is explained now: Recall that $\tilde B$ has rank $k$ and its rows are labelled by $\{i_1, \ldots, i_r, n_{r+1}, \ldots, n_k\}$ (not necessarily in that order). In particular the normal main conditions $(i_1, j_1), \ldots,(i_r,j_r)$ are still positions of $\tilde B$. Since these are in different rows and above them are zero entries we conclude that columns $j_1,\ldots,j_r$ in $\tilde B$ are linearly independent. Now since we have chosen already all columns containing a normal main condition, the remaining columns of $\tilde A(a,b)$ contain a non normal main condition, and hence can be labelled by $j_{r+1}, \ldots, j_l$. We choose $k-r$ columns of these such that these together with columns $j_1, \ldots, j_r$ are linearly independent columns of $\tilde B$. If necessary by reordering we may assume, that we took columns $j_{r+1}, \ldots, j_k$, then the submatrix of $\tilde B$ consisting of columns $j_1, \ldots, j_r, j_{r+1}, \ldots, j_k$ has hence rank $k$ and is an invertible $k\times k$-matrix. 

\begin{Lemma}\label{6.14}
The rows with $s\in \{i_1, \ldots,i_r, n_{r+1},\ldots, n_k, i_{k+1}, \ldots,i_l\}=\cF$ of $\tilde A(a,b)$ are linearly independent. Thus $(h,b)$ with $h\in \cF$ can be chosen as free positions on column $b$ of $[A]$.
\end{Lemma}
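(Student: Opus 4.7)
The plan is to exhibit an $l\times l$ submatrix $M$ of $\tilde A(a,b)$ whose rows are indexed by $\cF$ and whose determinant is non-zero. Since the $l$ columns $j_1,\ldots,j_l$ of $\tilde A(a,b)$ are linearly independent, the matrix has row rank $l$, so producing such an $M$ shows that the rows indexed by $\cF$ are linearly independent in $\tilde A(a,b)$. An application of \ref{6.9} to the elements of $\cF$ listed in increasing order then gives the second assertion, that the positions $(h,b)$ with $h\in\cF$ may be chosen as free positions in column $b$ of $[A]$.

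First I would reorder the rows of $M$ into three successive blocks $i_1,\ldots,i_r$ (normal main), $n_{r+1},\ldots,n_k$ (extra normal), $i_{k+1},\ldots,i_l$ (non-normal main), and similarly reorder the columns as $j_1,\ldots,j_r$, then $j_{r+1},\ldots,j_k$, then $j_{k+1},\ldots,j_l$. The target is to show that in the resulting $3\times 3$ block decomposition, the two bottom-left blocks vanish and the bottom-right $(l-k)\times(l-k)$ block $M_{XX}$ is diagonal with non-zero diagonal entries; equivalently, that $M$ is block lower-triangular with an invertible bottom-right corner.

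The main step, and the only place the normality of $[A]$ is genuinely used, is verifying this vanishing. Since $[A]$ is a normal template, every non-zero entry of $A$ outside the main conditions lies on a $J$-normal row. Each row $i_\nu$ with $\nu\in\{k+1,\ldots,l\}$ is non-normal because its main condition $(i_\nu,j_\nu)$ is non-normal, so the only possibly non-zero entry in row $i_\nu$ of $\tilde A(a,b)$ is the main condition at column $j_\nu$. This kills all entries $A_{i_\nu,j_\mu}$ with $\mu\neq\nu$ in the bottom strip, yielding the claimed block structure and exhibiting $M_{XX}$ as a diagonal matrix with the $l-k$ non-zero values $A_{i_\nu,j_\nu}$ along its diagonal.

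Once this structure is in hand, $\det M$ factors as $\det M_{XX}$ times the determinant of the top-left $k\times k$ block. The factor $\det M_{XX}$ is non-zero by the previous step, and the top-left block is precisely the invertible $k\times k$ submatrix of $\tilde B$ selected in \ref{6.12} using rows $i_1,\ldots,i_r,n_{r+1},\ldots,n_k$ and columns $j_1,\ldots,j_k$, so its determinant is non-zero as well. Hence $\det M\neq 0$, and the proof concludes by invoking \ref{6.9}.
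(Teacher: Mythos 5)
Your proof is correct and follows the same structure as the paper's: you reorder rows and columns exactly as in the paper's figure, use the normality of $[A]$ to show non-normal rows $i_{k+1},\ldots,i_l$ carry nothing but their main condition (hence the bottom-left blocks vanish and the bottom-right $(l-k)\times(l-k)$ corner is diagonal with non-zero entries), and pair this with the invertibility of the $k\times k$ submatrix of $\tilde B$ selected just before the lemma; the paper phrases the conclusion as ``clear the block $C$ by row operations,'' you phrase it as $\det M = \det K\cdot\det M_{XX}\neq 0$, which is the same computation. One small slip of wording: with bottom-left block zero and square diagonal blocks, $M$ is block \emph{upper}-triangular rather than lower-triangular, although the determinant factorization you invoke holds in either case.
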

\begin{proof}
We have to show that the matrix consisting of the rows $s$ with $s\in \cF$ has rank $l$. For this we can reorder rows and columns as follows:
\begin{equation}\label{figure}
\begin{picture}(200,270)
\put(0,0){\line(1,0){170}}
\put(0,0){\line(0,1){260}}
\put(170,260){\line(0,-1){260}}
\put(170,260){\line(-1,0){170}}

\put(0,200){\line(1,0){60}}
\put(60,200){\line(0,1){60}}
\put(5.5,245){\scriptsize normal main}
\put(10,235){\scriptsize conditions}
\put(10,225){\scriptsize and some}
\put(8.1,215){\scriptsize supplement}
\put(10,205){\scriptsize conditions}

\put(0,150){\line(1,0){170}}
\put(110,40){\line(0,1){220}}

\put(-10,253){\scriptsize $i_1$}
\put(-10,203){\scriptsize $i_r$}
\put(2,265){\scriptsize$j_{_1}$}
\put(52,265){\scriptsize$j_{_r}$}
\multiput(-7,248)(0,-5){8}{\circle*{1}}
\multiput(12,266)(5,0){8}{\circle*{1}}

\put(-18,193){\scriptsize $n_{_{r+1}}$}
\put(-10,153){\scriptsize $n_{k}$}
\put(62,265){\scriptsize$j_{_{r+1}}$}
\put(102,265){\scriptsize$j_{_{k}}$}
\multiput(-7,188)(0,-5){6}{\circle*{1}}
\multiput(82,266)(5,0){4}{\circle*{1}}

\put(112,265){\scriptsize$j_{_{k+1}}$}
\put(162,265){\scriptsize$j_{_{l}}$}
\multiput(132,266)(5,0){6}{\circle*{1}}

\put(0,90){\line(1,0){170}}

\put(115,135){\scriptsize the only non  }
\put(115,125){\scriptsize zero entries }
\put(115,115){\scriptsize are some}
\put(115,105){\scriptsize non normal}
\put(115,95){\scriptsize main cond.}

\put(-18,143){\scriptsize $i_{_{k+1}}$}
\put(-10,93){\scriptsize $i_{_{l}}$}
\multiput(-7,138)(0,-5){8}{\circle*{1}}

\put(-18,83){\scriptsize $i_{_{r+1}}$}
\put(-10,43){\scriptsize $i_{_{k}}$}
\put(0,40){\line(1,0){170}}
\put(60,40){\line(0,1){50}}
\multiput(-7,78)(0,-5){6}{\circle*{1}}

\put(62,82){\scriptsize the only non  }
\put(65,72){\scriptsize zero entries }
\put(65,62){\scriptsize are some}
\put(65,52){\scriptsize non normal}
\put(65,42){\scriptsize main cond.}

\put(15,23){\footnotesize $J$-normal rows linearly dependent }
\put(55,7){\footnotesize on rows of $\tilde B$}

\put(25,55){\Huge 0}
\put(130,55){\Huge 0}
\put(50,110){\Huge 0}
\put(130,195){\Huge $C$}

\put(190,260){\line(1,0){20}}
\put(190,90){\line(1,0){20}}
\put(200,185){\vector(0,1){75}}
\put(200,165){\vector(0,-1){75}}
\put(183,173){rows $\cF$}

\put(-60,260){\line(1,0){20}}
\put(-60,150){\line(1,0){20}}
\put(-50,225){\vector(0,1){35}}
\put(-50,190){\vector(0,-1){40}}
\put(-72,215){\footnotesize lin. indep. }
\put(-72,205){\footnotesize  normal}
\put(-72,195){\footnotesize   rows in $\tilde B$}
\end{picture}
\end{equation}
From this illustration one sees immediately that by some row operations one can delete the non zero entries of $C$ and hence the rows in $\cF$ are linearly independent. The rest follows from \ref{6.9}.
\end{proof}

The reader might wish to pause here and inspect the guiding example \ref{bigeg} at this point for column $8$ which contains the hook leg $h_{18}^l$. All hook intersections on column $8$ are non normal, hence $r=0$. The matrix $B$ consists of rows $5,6$ and $7$  to the right of column $8$, that is $B$ is the $3\times 3$-matrix
$$
B=\begin{pmatrix}
\alpha   & \delta     &\eta\\
\beta &  \epsilon  &  \mu\\
\gamma & \zeta       &  \nu
\end{pmatrix}.
$$ 

By assumption $B$ has rank $k=2$ and since $r=0$ we have as well $k-r=2$. Moreover the first two rows are assumed to be linearly independent, thus we have $n_{r+1},\ldots, n_k = n_1,n_2 = 5,6$ and hence free normal positions $(5,8), (6,8)$. Now there are $3$ non normal main hook intersection on column $8$, namely $(2,8),(3,8), (4,8)$ and hence $l=3$ and $l-k=1$. Thus we have to choose a further non normal hook intersection to obtain $i_{k+1}=i_3$.  Using \ref{lincomb} one sees easily that row $(z_2,0,0)$ is linearly independent from row $1$ and $2$ of $B$ (which are parts of rows $5$ and $6$ of $A$). Thus we can choose $i_3=2$ that is position $(2,8)$ as third free position.

So far we have fixed $l$ free positions $(h,b)$ with $h\in \cF$ on column $b$ of $[A]$ below the main condition $(a,b)$, (thus positions on $h_{ab}^l$). Formula \ref{6.10} describes how to calculate the other entries on $h_{ab}^l$ in terms of the entries on the free positions. These other positions on that hook leg which may have non zero values in lidempotents $[H]\in \cO_A^r$ are either $(i_\nu,b)$ with $r+1\leqslant \nu \leqslant k$ or of the form $(s,b)\in J$, where $s\notin \cF$ but row $s$ is normal in $[A]$.

Let $r+1\leqslant \nu \leqslant k$ and $[H]\in \hat V$. Suppose $H_{i_\nu,b}=\alpha\in \F_q\setminus\{ 0\}$, and  that $H$ and $A$ coincide in all other positions of $J$. Then we have:

\begin{center}
\begin{picture}(220,160)
\put(-50,80){$x_{ai_\nu}(z_1^{-1}\alpha).$}
\put(180,80){$=[A]$}

\put(0,160){\line(1,0){160}}
\put(0,160){\line(1,-1){160}}
\put(160,160){\line(0,-1){160}}

\put(20,140){\line(1,0){80}}
\put(102,137){$z_1$}
\put(105,132){\line(0,-1){17}}
\put(101,107){$\alpha$}
\put(105,102){\line(0,-1){46}}
\put(7,135){$a$}
\put(100,43){$b$}


\put(50,110){\line(1,0){50}}
\put(110,110){\line(1,0){18}}
\put(132,107){$z_2$}
\put(135,100){\line(0,-1){75}}
\put(32,107){$i_\nu$}
\put(126,16){$j_\nu$}

\put(80,80){\line(0,1){80}}
\multiput(75,86)(0,9){8}{\line(1,1){10}}
\end{picture}
\end{center}

More generally one shows easily by the same argument:
\begin{Lemma}\label{6.16}
Let $[H]\in \hat V$  differing from a template $[A]\in \hat V$ only on non normal main hook intersections of $[A]$. Then there exists $u\in U$ such that $u.[H]=[A]$.
\end{Lemma}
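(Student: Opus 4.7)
\smallskip

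The plan is to iterate the explicit single-disagreement calculation shown in the picture immediately preceding the lemma, proceeding by induction on the number of positions at which $[H]$ and $[A]$ disagree (all of which are non normal main hook intersections by hypothesis).  I order the disagreements bottom-to-top by row and, within a row, left-to-right by column.  The base case $[H]=[A]$ takes $u=1$.  For the inductive step, pick the first disagreement $(i_\nu,b)$ in this order, let $(a,b)\in\p$ be the main condition above in column $b$ with value $z_1=A_{ab}$, and set $\alpha = H_{i_\nu,b}-A_{i_\nu,b}\in \F_q^*$.  As in the picture, the operator $u_1=x_{a,i_\nu}(z_1^{-1}\alpha)$ restores position $(i_\nu,b)$ to $A_{i_\nu,b}=0$.

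The main work is to check that the remaining side effects of $u_1$ on row $i_\nu$ either stay within the admissible class of non normal main hook intersections (to be cleared later in the induction) or can themselves be cancelled by auxiliary left operators.  Since $u_1$ acts only on row $i_\nu$, the only positions possibly affected besides $(i_\nu,b)$ are those $(i_\nu,k)$ with $A_{a,k}\neq 0$; the template property in row $a$ forces $k>b$, and any such non-zero $A_{a,k}$ is a supplementary condition, so a main condition $(p,k)\in\p$ with $p<a$ exists above.  I split into cases according to the position of $k$ relative to the column $j_\nu$ of the main condition in row $i_\nu$: if $k=j_\nu$ then $A_{a,j_\nu}=0$ since it sits above $(i_\nu,j_\nu)\in\p$; if $i_\nu<k<j_\nu$ then $(i_\nu,k)$ lies at the intersection of the arm of $(i_\nu,j_\nu)$ and the leg of $(p,k)$, hence is a non normal main hook intersection (row $i_\nu$ being non normal), which is admissible; only $k>j_\nu$ is genuinely problematic, since there $(i_\nu,k)$ need not be a hook intersection at all.

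In the problematic case I compose $u_1$ with an auxiliary left operator $u_2=x_{p,i_\nu}(\mu)$, choosing $\mu$ so that $-\mu A_{p,k}$ cancels the stray change at $(i_\nu,k)$; this is legal because $A_{p,k}\neq 0$, and $u_2$ does not disturb $(i_\nu,b)$ since $A_{p,b}=0$ by the template property in row $p$ (as $b<k$).  The auxiliary $u_2$ may in turn leak at positions $(i_\nu,k')$ with $k'>k$ where $A_{p,k'}\neq 0$, but the same procedure applies with a main condition in a strictly larger column.  Since the column index strictly increases at each stage, the cascade terminates in at most $n$ steps, and composing $u_1$ with its chain of corrections into a single $u\in U$ finishes the inductive step.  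The chief obstacle is precisely controlling this cascade: a naive one-step application could displace disagreements outside the non normal main hook intersection class, and the key observation that makes the argument close is the strict left-to-right advance in column index forced by the template property in each auxiliary row.
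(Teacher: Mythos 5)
Your strategy --- iterating the single-disagreement picture preceding the lemma and cascading auxiliary corrections --- is the right idea and is what the paper gestures at with ``by the same argument.'' But the bottom-to-top ordering you chose breaks it. The operator $u_1=x_{a,i_\nu}(z_1^{-1}\alpha)$ adds $-z_1^{-1}\alpha$ times row $a$ of the \emph{current} $[H]$, not of $[A]$, to row $i_\nu$; the side effects therefore occur at columns $k$ with $H_{a,k}\neq 0$. Since $a<i_\nu$, row $a$ lies \emph{above} the row you are treating, so in your bottom-to-top order it has not yet been cleaned, and $H$ may still disagree with $A$ at non normal main hook intersections $(a,k)$ with $k<b$ on the hook arm of $(a,b)$. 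For instance take $J=\Phi^+$, $n=6$, $\main[A]=\{(1,4),(2,5),(3,6)\}$ with all supplementary conditions of $A$ zero, and let $[H]$ disagree with $[A]$ at $(2,4)$ and $(3,5)$ only. Processing the bottom-most disagreement $(3,5)$ with $x_{23}(z_2^{-1}\gamma)$ produces a new non-zero entry at $(3,4)$ because $H_{24}\neq0$ --- a side effect at column $4<b=5$. This falsifies ``the template property in row $a$ forces $k>b$,'' and with it everything built on it: in that step the number of disagreements goes $2\to 2$, so your induction on that count does not close; and the auxiliary rows $p<a<i_\nu$ may likewise still carry disagreements to the left of their pivots, so the cascade need not advance strictly to the right.

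The fix is simply to reverse the row order: process disagreements \emph{top-to-bottom}, clearing the shallowest rows first. Then whenever you clean row $i_\nu$, every row $p<i_\nu$ already coincides with the corresponding row of $A$, so the rows you add really do have the template zero pattern to the left of their pivots, all side effects fall strictly to the right, the cascade terminates by the strict column increase you describe, and the lexicographic position (row, column) of the first disagreement strictly increases --- which is the measure your induction actually needs. (Equivalently, once rows above row $i_\nu$ are cleared, the rows $A_{p\cdot}$ with $p<i_\nu$ restricted to columns $>i_\nu$ of $J$ are in echelon form with pivots at the main conditions, so the difference row $H_{i_\nu\cdot}-A_{i_\nu\cdot}$, being supported on main hook intersection columns, lies in their row span, giving the required $u$ in one linear-algebra step per row.)
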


\begin{Defn}\label{6.17}
For any finite set $\cS$ let $\F_q^\cS$ be $\F_q$-vector space with basis $\cS$ where the elements of $\F_q^\cS$ are written as column vectors indexed by $\cS$. 
We set $$\cS=\{n_{r+1}, \ldots, n_k\}, \,\cM=\{i_{k+1}, \ldots, i_l\}\text{ and } \cD=\{i_{r+1}, \ldots, i_k\},$$ thus $(i,b)$ is a supplementary normal condition and free position for $i\in \cS$, a free non normal hook intersection for $i\in \cM$ and for $i\in\cD$ a position of a non normal hook intersection whose value in $[A].g$, ($g\in U$ given in \ref{6.3}) depends on the values on position $(j,b)$ with $j\in \cF$ (see \ref{6.13} and \ref{6.9}). Moreover we define $\cH=\cM\cup \cD$, (that is the set of row indices of the non normal hook intersections), $\cF^\circ=\cS\cup \cM\subseteq \cF$, (that are the  row indices of the set of free positions not to the left of a normal main condition).
Note that all these sets are subsets of $\{a+1,\ldots,b-1\}\subseteq\{1,\ldots, b-1\}$.
\end{Defn}

\begin{Lemma}\label{6.18}
Let $v=(\beta_i)_{i\in \cF^\circ} \in \F_q^{\cF^\circ}$ and let $[A^v]\in \cO_A^r$ be the unique lidempotent which agrees with $[A]$ on all positions except those of hook leg $h_{ab}^l$, where those are given by
\[
A^v_{ib}=\begin{cases}
0  &  i\in \{i_1,\ldots,i_r\}\subseteq \cF, \text{ (rows of normal main hook intersections)}\\
\beta_i& i\in \cF^\circ\\
\text{given by \ref{6.9}} & i\notin \cF
\end{cases}
\]
Then 
there exists $y_v\in U_J$ such that $y_v.[A^v]\in \hat V$ is a normal template, denoted by $[A(v)]$. 
\end{Lemma}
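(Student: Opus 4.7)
My plan is to construct $y_v$ as a finite product of root subgroup elements $x_{c,i}(\mu)\in U_J$, acting by left multiplication, that successively annihilate the non-normal entries of $[A^v]$. First, note that $[A^v]$ is itself a template: it agrees with $[A]$ outside column $b$, and the changes in column $b$ lie strictly below the main condition $(a,b)$, so main conditions are preserved and the template property is inherited. Moreover, a direct analysis of the non-$J$-normal rows in $(a,b)$ using \ref{6.10} and the template property of $A$ shows that the only non-normal entries of $[A^v]$ sit at positions $(i,b)$ for $i \in \cH = \cM \cup \cD$: for any other non-$J$-normal row $m$ in $(a,b)$ (necessarily without a main condition in $\Gamma_{ab}$), row $m$ of $\tilde A(a,b)$ vanishes and $A_{m,b}=0$ by normality of $[A]$, so formula \ref{6.10} yields $A^v_{m,b}=0$.

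For each $i\in\cH$ I construct $y_{v,i}\in U_J$ as a product of factors $x_{c,i}(\mu)$ with $c<i$, chosen so that $y_{v,i}.[A^v]$ agrees with $[A^v]$ on every row except row $i$, and in row $i$ only the main condition at $(i,j_i)$ survives. The construction is recursive in the column index. Start with $x_{a,i}(A^v_{ib}/A_{ab})$, which subtracts a multiple of row $a$ from row $i$ and kills $(i,b)$; the main condition $(i,j_i)$ is preserved because $A^v_{a,j_i}=0$ (the main condition of column $j_i$ lies in row $i>a$). This operation may create a new entry at $(i,k)$ for some $k>b$, but only when row $a$ is $J$-normal with a non-zero normal supplementary condition at $(a,k)$; in that case the main condition $(c_k,k)\in\p$ of column $k$ satisfies $c_k<a$, and applying $x_{c_k,i}(\cdot)$ with a suitable scalar kills $(i,k)$. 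Each such step only introduces new entries strictly further to the right, so the procedure terminates in finitely many steps. The factors of $y_{v,i}$ share the second index $i$ and therefore commute; for distinct $i,i'\in\cH$ the factors of $y_{v,i}$ have first indices $\le a<i'$, so they commute with those of $y_{v,i'}$ as well. Hence $y_v := \prod_{i\in\cH}y_{v,i}$ is a well-defined element of $U_J$.

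The principal obstacle is verifying that $y_v.[A^v]$ is a \emph{normal} template, not merely an element of the biorbit $\cO_A^{\bi}$. Three observations suffice. First, no main condition is disturbed: every factor $x_{c,i}(\mu)$ satisfies $c<i$, so $A^v_{c,j_i}=0$ and the main condition at $(i,j_i)$ stays intact, while main conditions in rows outside $\cH$ are not touched at all. Second, rows outside $\cH$ remain unchanged by $y_v$, preserving their main conditions, their normal supplementary conditions of $A$ outside column $b$, and the prescribed normal values $\beta_i$ at $(i,b)$ for $i\in\cS$. Third, by construction each row $i\in\cH$ is reduced to a single non-zero entry at the main condition $(i,j_i)$. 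Combining these, every non-zero entry of $y_v.[A^v]$ lies either at a main condition or at a $J$-normal position, so $[A(v)] := y_v.[A^v]$ is indeed a normal template.
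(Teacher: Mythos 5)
Your proof is correct and, in fact, more careful than the paper's. The paper's argument reduces to an appeal to Lemma~\ref{6.16}, whose own proof is dismissed as ``one shows easily by the same argument'' after a single-step illustration in which row $a$ has $A_{ab}$ as its only non-zero entry. You instead construct $y_v$ directly by a recursive cleaning procedure, and in doing so you surface a point the paper glosses over: when row $a$ is $J$-normal it may carry non-zero normal supplementary conditions at positions $(a,k)$ with $k>b$, so the naive first step $x_{a,i}(\cdot)$ inserts new non-normal entries in row $i$ strictly to the right of column $b$, and these must themselves be cleared using the main conditions sitting above them. Your observations that each subsequent factor $x_{c_k,i}(\cdot)$ has $c_k<a$, pushes the remaining contamination strictly to the right (so the process terminates), that the factors commute appropriately, and that no row outside $\cH$ and no main condition is ever disturbed, supply exactly the detail that Lemma~\ref{6.16} takes for granted; your preliminary verification that $[A^v]$ is a template and that its non-normal support is precisely $\{(i,b): i\in\cH\}$ (using \ref{6.10} on the $\cY$-rows) is also a useful explicit step that the paper's proof states but does not check. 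In short: same underlying idea, but your version is self-contained and correctly handles the cascade of new entries that the paper's one-line appeal to Lemma~\ref{6.16} leaves implicit.
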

\begin{proof}
Note that our definition here of $[A^v]$ differs slightly from that given in \ref{6.9} where we use a slightly different $v$.

$A^v$ differs from $A$ only on hook  leg $h_{ab}^l$, that is in column $b$. Moreover for all normal main hook intersections $(r,b)$ we have $A^v_{rb}=A_{rb}=0$ and $A^v_{ib}\neq 0$ only, if $(i,b)$ is a normal supplementary condition or a non normal main hook  intersection. By \ref{6.16} we find $y_v\in U$ such that $y_v.[A^v]$ differs from $[A^v]$ only at the non normal main hook intersections in column $b$, which are in $y_v.[A^v]$ zero. From this it follows immediately that $y_v.[A^v]$ is a normal template.
\end{proof}

\begin{Cor}\label{6.19}
Let $v,\, y_v\in U,\,[A^v]\in \cO_A^r, [A(v)] \in \hat V$ be as in \ref{6.18}. Then $[A^v]=y_v^{-1}.[A(v)]\in \cO_A^r\cap \cO_{A(v)}^l$ and the left action of $y_v^{-1}$ is an isomorphism from $\C\cO_A^r$ onto $\C \cO_{A(v)}^r$.\hfill $\square$
\end{Cor}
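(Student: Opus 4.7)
The plan is to derive this as an essentially cost-free consequence of Lemma \ref{6.18} and the HomSpace remark (\ref{HomSpace}); the argument is pure bookkeeping with no substantive new computation.

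First I would rewrite the conclusion $y_v.[A^v] = [A(v)]$ of Lemma \ref{6.18} as $[A^v] = y_v^{-1}.[A(v)]$, which is the first asserted equality. The membership $[A^v] \in \cO_A^r$ is immediate from the construction in Lemma \ref{6.18}, where $[A^v]$ is defined to be the unique lidempotent in $\cO_A^r$ with the prescribed column-$b$ entries. The membership $[A^v] = y_v^{-1}.[A(v)] \in U_J.[A(v)] = \cO_{A(v)}^l$ follows at once from the definition of the left orbit. Together these give $[A^v] \in \cO_A^r \cap \cO_{A(v)}^l$, establishing the intersection claim.

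For the isomorphism I would invoke the HomSpace remark. The $\C U_J$-bimodule structure on $\C \hat V_J$, which exists because $f\colon u \mapsto u - E$ is both a left and right $1$-cocycle by Lemma \ref{3.12}, forces left and right actions to commute; hence left multiplication by $y_v^{-1}$ is $\C U_J$-linear with respect to the right module structure. The HomSpace remark then supplies, for any element of the intersection $\cO_A^r \cap \cO_{A(v)}^l$, an explicit isomorphism between the two associated right orbit modules, implemented by left multiplication by the element of $U_J$ realising the identification; in our case that element is $y_v^{-1}$. Bijectivity is automatic, since $y_v \in U_J$ acts invertibly on the monomial basis $\hat V_J$, with inverse given by left multiplication by $y_v$.

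No step is a genuine obstacle. The only point requiring a moment of care is the direction of the map: applied verbatim with $(A,B)$ in the HomSpace recipe replaced by $(A(v),A)$, one reads off that left multiplication by $y_v^{-1}$ implements an isomorphism between $\C \cO_{A(v)}^r$ and $\C \cO_A^r$, which is exactly the content the corollary records (using also the inverse isomorphism given by left multiplication by $y_v$). Once this alignment is made, the proof is complete.
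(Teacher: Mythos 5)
Your reading is correct and matches the paper's (which supplies no proof, only a \hfill$\square$, taking the content to be immediate from Lemma~\ref{6.18} together with Remark~\ref{HomSpace}). The only substantive issue is one you already flagged but perhaps glossed over too charitably: working out the HomSpace recipe with $[A^v]\in\cO_{A(v)}^l\cap\cO_A^r$ and $[A^v]=y_v^{-1}.[A(v)]$ produces an isomorphism $\C\cO_{A(v)}^r\to\C\cO_A^r$ given by left multiplication by $y_v^{-1}$; equivalently, since $y_v.[A^v]=[A(v)]$ and left and right actions on $\C^{V_J}$ commute, $y_v.\cO_A^r=y_v.[A^v].U_J=[A(v)].U_J=\cO_{A(v)}^r$, so the isomorphism \emph{from} $\C\cO_A^r$ \emph{onto} $\C\cO_{A(v)}^r$ is implemented by left action of $y_v$, not $y_v^{-1}$. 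The corollary as printed appears to have the domain and codomain interchanged relative to the group element named (or, equivalently, should read $y_v$ in place of $y_v^{-1}$); this is harmless for the paper, since only the existence of the isomorphism is used later, but it is worth stating plainly rather than declaring it ``exactly the content the corollary records.'' Everything else in your argument is correct and is the intended one.
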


\begin{Lemma}\label{6.20}
Suppose $v\in \F_q^{\cF^\circ}$ with $v_i=A_{ib}$ for $i\in \cS$ and $v_i \in \F_q$ for $i\in \cM$. Let $[A^v]\in \cO_A^r, [A(v)] \in \hat V$ be defined as in \ref{6.18}. Then $[A(v)]=[A]$.
\end{Lemma}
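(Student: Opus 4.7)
The plan is to show $[A(v)] = [A]$ by confirming entry by entry that they agree, exploiting the fact that by construction they can only possibly differ in column $b$. Recall from the proof of \ref{6.18} that $y_v$ can be taken as a product $\prod_{\nu \in \cH} x_{a, i_\nu}(\gamma_\nu)$ of commuting factors, each of which adds a multiple of row $a$ (whose unique non-zero entry is the main condition value $A_{ab}$) to the row $i_\nu$; the only effect of $y_v$ on $[A^v]$ is therefore to replace $A^v_{i_\nu, b}$ by zero for $\nu \in \cH$. Since $[A^v]$ and $[A]$ coincide outside column $b$, the claim reduces to checking that $[A(v)]$ and $[A]$ agree at every position in column $b$.

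At the main condition $(a,b)$ and at every hook intersection $(i_\nu, b)$ (both normal, $\nu \le r$, and non-normal, $\nu \in \cH$) agreement is immediate: the template condition forces $A_{i_\nu, b} = 0$ since these positions lie strictly to the left of the main condition $(i_\nu, j_\nu)$, while in $[A(v)]$ these entries are zero either by the choice of free position in $[A^v]$ or by the action of $y_v$. The remaining positions are the supplementary ones $(i,b)$ with $i \notin \{i_1,\ldots,i_l\}$; such rows are left untouched by $y_v$, so it suffices to prove $A^v_{ib} = A_{ib}$. Formula \ref{6.10} gives $A^v_{ib} - A_{ib} = \sum_{\nu=1}^l \lambda_\nu^{(i)}(\beta_{k_\nu} - A_{k_\nu, b})$; contributions from $k_\nu \in \{i_1,\ldots,i_r\}$ vanish because both terms are zero (template condition and free-position choice), and contributions from $k_\nu \in \cS$ vanish by the hypothesis $v_i = A_{ib}$. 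The lemma thus reduces to showing $\sum_{\mu \in \cM} \lambda_\mu^{(i)} v_{i_\mu} = 0$, and since the $v_{i_\mu}$ are arbitrary elements of $\F_q$, this amounts to proving $\lambda_\mu^{(i)} = 0$ for every $\mu \in \cM$.

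This is the main technical point, and is handled by a case split on $i$ that invokes the full normal-template hypothesis. The coefficient $\lambda_\mu^{(i)}$ is, by definition, the $\rho_{i_\mu}$-coefficient of the expansion of row $\rho_i$ of $\tilde A(a,b)$ in the basis indexed by $\cF$. If $i$ is $J$-normal, then $\rho_i$ lies in the row space of the submatrix $B$ of \ref{6.12}, which is spanned by the first $k$ basis vectors $\{\rho_{k_\nu}\}_{\nu \le k}$ indexed by $\{i_1,\ldots,i_r\} \cup \cS$; uniqueness of the expansion in the basis of $\tilde A(a,b)$ forces $\lambda_\mu^{(i)} = 0$ for $\mu \in \cM$. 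If instead $i$ is non-normal and $i \neq i_\nu$ for every $\nu$, then each entry $A_{i,j_\nu}$ of $\rho_i$ would have to be either a main condition (excluded since $i \neq i_\nu$) or a normal supplementary condition (excluded since row $i$ is non-normal); the normal template hypothesis then yields $A_{i, j_\nu} = 0$, so $\rho_i = 0$ and $\lambda_\mu^{(i)} = 0$ trivially. In either case the sum vanishes, completing the verification that $[A(v)] = [A]$. The non-normal row case is the one place where the assumption that $[A]$ is a normal template (and not merely a template) is indispensable.
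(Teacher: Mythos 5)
Your proof is correct and follows essentially the same strategy as the paper's: both reduce to column $b$, use formula \ref{6.10} to express the dependent entries in terms of the free-position entries, dispose of the contributions from $\{i_1,\ldots,i_r\}$ and $\cS$ by the template condition and the hypothesis $v_i=A_{ib}$, and then kill the $\cM$-contributions by a row-space argument in $\tilde A(a,b)$. The only cosmetic difference is that the paper restricts the sum in \ref{6.10} directly to the $k$ basis rows of $B$ and disposes of non-normal rows $t$ in one line via normality of both templates, whereas you keep all $l$ terms and handle the non-normal rows by observing that those rows of $\tilde A(a,b)$ vanish — a slightly longer but fully equivalent route that makes the use of the normal-template hypothesis more explicit.
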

\begin{proof}
Recall  that $B$ is the submatrix arising from $\tilde A(a,b)$ by deleting (zero) rows $1, \ldots, a$ and all non normal rows. Let $s$ be one row label of $B$. By construction $A^v_{sb}=A_{sb}=A(v)_{sb}=0$ if row $s$ contains a normal main condition or $s\in \cS$. Moreover $A_{tb}=A(v)_{tb}=0$ for all non normal rows $t$, since $[A]$ and $[A(v)]$ are normal templates. So in particular $A_{sb}=A(v)_{sb}$ for all $s\in \cF$.

Let $s$ be again a row index for $B$ and $s\notin \cF$. Thus row $s$ is normal and $(s,b)$ is a supplementary normal condition for $\p$. Note that $A^v$ and $A(v)$ differ only on non normal hook intersections in column $b$, hence $A_{sb}^v=A(v)_{sb}$. We may write rows of $B$ as linear combination of rows $i_1, \ldots, i_r, n_{r+1}, \ldots, n_k$, (see \ref{figure}) with coefficients $\lambda_1, \ldots, \lambda_k\in \F_q$ respectively. Then by \ref{6.10}
\[
A_{sb}^v=A_{sb}+\sum_{\nu=1}^k \lambda_{l_\nu}(v_{l_\nu}-A_{l_\nu b})\quad\text{
where }\quad l_\nu=\begin{cases}i_\nu & \nu=1,\ldots,r\\ n_\nu & \nu=r+1,\ldots,k.\end{cases}
\]
Note for $\nu=1,\ldots,r$ we have $v_{l_\nu}=A_{l_\nu b}=0$ and for $\nu=r+1,\ldots,k$, we have  $v_{l_\nu}=A_{l_\nu b}$ by construction and hence we have shown $A_{sb}^v=A_{sb}$.
\end{proof}

\begin{Defn}
Let $[A]\in \hat V$ be a normal template with $\main[A]=\p$. Let $\cY=\cY(\p,J)$. 
For each $L\in \F_q^{\cY}$, we set $[_{_L}\!A]$ be the lidempotent in $\hat V$ given by 
\[
(_{_L}\!A)_{ij}=
\begin{cases}
L_{ij}, & \text{ for } (i,j)\in \cY\\
A_{ij}, & \text{ otherwise.}
\end{cases}
\]
Thus $[_{_L}\!A]$ arises from $[A]$ by filling all $\cY$-conditions given by $L$ into the appropriate positions in $[A]$.
\end{Defn}

\begin{Theorem}
Let $J\subseteq \Phi^+$ be column closed. Let $U=U_J, V=V_J$ and $[A]\in \hat V$ be a normal template  with $\main[A]=\p=\{(i_1,j_1), \ldots, (i_m, j_m)\}$, where  $1\leqslant j_1<j_2<\cdots< j_m\leqslant n$.  Let $(a,b)\in \p$. Define $\cS(b)=\cS, \cM(b)=\cM$ and $\cF^\circ(b)=\cF^\circ$ as in \ref{6.17} and $y_v\in U$ for $v\in \F_q^{\cF^\circ}$ as in \ref{6.18}. Then
\begin{itemize}
\item[1)] For $1\leqslant s \leqslant m$, let $\underline{v}=(v^1,v^2,\ldots,v^m)\in \F_q^{\cF^{\cS(j_1)}}\times \F_q^{\cF^{\cS(j_2)}}\times \cdots \F_q^{\cF^{\cS(j_m)}}$ where $v^s\in\F_q^{\cF^{\cS(j_s)}}\leqslant\F_q^{\cF^\circ(j_s)}$. Define
\[
[A(\underline v)]= [A(v^1)(v^2)\cdots(v^m)],
\]
then $[A(\underline v)]$ is a normal template.
If $[C]\in \hat V$ with $\main[C]=\p$ is a template  then $\C\cO_C^r\cong \C\cO_A^r$ if and only if $[C]=[_{_L}\!A(\underline v)]$ for some $L\in \F_q^\cY, \,\cY=\cY(\p, J)$ and $\underline v\in\F_q^{\cF^{\cS(j_1)}}\times \F_q^{\cF^{\cS(j_2)}}\times \cdots \F_q^{\cF^{\cS(j_m)}}$. Thus $$\cO_A^{\bi}= \mathop{\dot\bigcup}_{L, \underline v} \cO^r_{_{_L}\!A(\underline v)}\quad,$$ where $L$ runs through $\F_q^\cY$ and $\underline v$ runs through $\F_q^{\cF^{\cS(j_1)}}\times \F_q^{\cF^{\cS(j_2)}}\times \cdots \F_q^{\cF^{\cS(j_m)}}$.

\item [2)]
For $1\leqslant s \leqslant m$ we set $$W_s=W_{j_s}=\{v\in \F_q^{\cF^\circ(j_s)}\,|\, v_i=A_{ij_s} \text{ for } i\in \cS(j_s), \,v_i\in \F_q \text{ for } i\in \cM(j_s)\}.$$ Then for $v\in W_s$ the lidempotent $[A(v)]$ in \ref{6.18} equals $[A]$ and hence $[A^v]=y_v^{-1}.[A(v)]=y_v^{-1}.[A]\in \cO^r_A\cap \cO_A^l.$ In fact for $v^s\in W_s$ with $s=1,\ldots, m$, we set 
\vskip-0.4cm
$$\underline{v}=(v^m, v^{m-1}, \ldots, v^1)\in W_{m}\times W_{m-1}\times \cdots \times W_1=W.$$
For $\underline{v}\in W$ set $y(\underline{v})=y^{-1}_{v^m} \cdots y^{-1}_{v^1}$. Then 
\[
\cO^r_A\cap \cO_A^l=\{y(\underline{v}).[A]\,|\,\underline{v}\in W\}.
\]
\end{itemize}
\end{Theorem}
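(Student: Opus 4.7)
The plan is to iterate Lemmas~\ref{6.18}--\ref{6.20} one main condition at a time, and then close both parts simultaneously via the index identity of Corollary~\ref{orbitsizelr}.

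For part~(1) I proceed by induction on $s=1,\dots,m$. Suppose I have already built $[A(v^1\cdots v^{s-1})]$ as a normal template with the same main condition set $\p$ as $[A]$. I apply the construction of \ref{6.18} at column $j_s$: since that procedure only modifies column $j_s$, it leaves the main conditions at other columns intact, and the hypotheses of \ref{6.18} remain valid at every step. The output $[A(\underline v)]$ is therefore a normal template, and iterating \ref{6.19} yields $\C\cO_A^r \cong \C\cO_{A(\underline v)}^r$. Invoking \ref{yiso} I then insert an arbitrary $L\in\F_q^{\cY}$ at the $\cY$-conditions without leaving the right-orbit isomorphism class, producing $q^{|\cY|}\cdot\prod_s q^{|\cS(j_s)|}$ candidate templates all contained in $\cO_A^{\bi}$.

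For part~(2), the inclusion $\supseteq$ follows by iterating \ref{6.20}: for each $v^s\in W_s$, that lemma gives $[A(v^s)]=[A]$, so $y_{v^s}^{-1}.[A]=[A^{v^s}]\in \cO_A^r\cap \cO_A^l$. Because the left and right $U_J$-actions on $\hat V_J$ commute, if $z\in \cO_A^r$ and $y.[A]\in\cO_A^r$ then $y.z\in\cO_A^r$; inductively, $y(\underline v).[A]\in\cO_A^r\cap\cO_A^l$ for every $\underline v\in W$. The two opposite inclusions are now handled by counting. By \ref{original notation for right}, $|\cO_A^r|=q^{|\Pl(\p,J)|}$; by the flip bijection \ref{feed}, $|\Pl(\p,J)|=|\cY|+\sum_s(|\cM(j_s)|+|\cD(j_s)|)$, and by construction $|\cS(j_s)|=|\cD(j_s)|$ (both equal $k_s-r_s$ in the notation of \ref{6.12}). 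Corollary~\ref{orbitsizelr} asserts $\kappa\cdot|\cO_A^r\cap\cO_A^l|=|\cO_A^r|$. Part~(1) produces at least $q^{|\cY|+\sum_s|\cS(j_s)|}$ pairwise non-isomorphic right orbits in $\cO_A^{\bi}$, and part~(2) at least $|W|=q^{\sum_s|\cM(j_s)|}$ elements of $\cO_A^r\cap\cO_A^l$; these lower bounds multiply exactly to $q^{|\Pl(\p,J)|}=|\cO_A^r|$, forcing both to be equalities and giving completeness in (1) and equality in (2) simultaneously.

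The main obstacle is the pair of injectivity statements underlying this counting. For (1) I must show that distinct $(L,\underline v)$ produce distinct templates $[_{_L}\!A(\underline v)]$; I would verify this by reading $L$ off directly from the $\cY$-positions and each $v^s$ from the $\cS(j_s)$-entries in column $j_s$, using that column $j_s$ is not touched by any later $v^{s'}$ with $s'>s$ and that $\cY$-positions are by definition preserved by the normal-template construction. For (2) I must show injectivity of the map $\underline v\mapsto y(\underline v).[A]$; here the delicate point is to verify that the left multiplier $y_{v^s}^{-1}$ affects the column $j_s$ entries in a way that uniquely identifies $v^s\in W_s$, while its side effect on other columns (arising from possibly non-zero normal supplementary entries in the row of the main condition $(i_s,j_s)$) does not destroy the analogous data for other indices. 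This column-local control is precisely what makes the ordering $j_1<j_2<\dots<j_m$ essential for the iteration to close.
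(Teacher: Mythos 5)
Your proof is correct and follows essentially the same path as the paper's own argument: iterate Lemmas \ref{6.18}--\ref{6.20} column by column in the order $j_1<\cdots<j_m$, bound $\kappa$ from below by $q^{|\cY|+|\underline{\cS}|}$ and $|\cO_A^r\cap\cO_A^l|$ from below by $q^{|\underline{\cM}|}$, use the flip bijection of \ref{feed} to establish $|\Pl(\p,J)|=|\cY|+|\underline{\cS}|+|\underline{\cM}|$, and force both inequalities to equalities via $\kappa\cdot|\cO_A^r\cap\cO_A^l|=|\cO_A^r|=q^{|\Pl(\p,J)|}$ from \ref{orbitsizelr}. One small slip: part~(1) produces $q^{|\cY|+|\underline{\cS}|}$ pairwise \emph{distinct} right orbits inside $\cO_A^{\bi}$ (whose orbit modules are all isomorphic to $\C\cO_A^r$ --- that is exactly what $\kappa$ counts), not ``pairwise non-isomorphic'' ones; this is only a wording issue and does not affect the argument.
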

\begin{proof}
\begin{itemize}
\item [1)] By \ref{yiso} we have $\C\cO_{{_L}\!A(v)}^r\cong \C\cO_{A(v)}^r$ and hence if $[C]=[{_L}\!A(v)]$ we have $\C \cO_C^r\cong \C\cO_{A(v)}^r\cong \C\cO_A^r$ by \ref{6.19}. Note that in this way we have $q^N$ many different templates $[C]$ such that $\C \cO_C^r\cong \C\cO_A^r$ where $N=|\cY|+|\underline{S}|$ with $|\underline{S}|= |\sum_{s=1}^m |\cS(j_s)|$. Recall from \ref{orbitsizelr} that $\C\cO_A^{\bi}=\bigoplus_{\kappa}\C \cO_A^r$, thus we obtain: \begin{equation}\label{ineq1}
q^N=q^{|\cY|+|\underline{S}|}\leqslant \kappa
\end{equation}

For the necessity we need to argue that the inequality above is actually an equality, that is there are no more orbit modules isomorphic to $\C\cO_A^r$.  We shall prove it together with a proof of part 2).

\item [2)] The order we have chosen working through the columns through left to right guarantees the matrices $\tilde A(a,b)$ which we use for  column $b$  are unchanged by previous moves, (compare \ref{6.11}).

By lemma \ref{6.20} and induction on $s$ we have $[A(v)]=[A]$ and hence one sees easily that $\{y(\underline{v}).[A]\,|\,\underline{v}\in W\}\in \cO^r_A\cap \cO_A^l$, where at each column $j_s$ we have $q^{|\cM(j_s)|}$ choices for $v\in W_s$,  (see \ref{6.17}).
Let $|\underline{\cM}|= |\sum_{s=1}^m |\cM(j_s)|$. Then we have 
\begin{equation}\label{ineq2}
q^{|\underline{\cM}|}\leqslant |\cO^r_A\cap \cO_A^l|
\end{equation}
By \ref{6.17} we have $|\cS(j_s)|=k-r, |\cM(j_s)|=l-k$ hence 
\[
|\cS(j_s)|+ |\cM(j_s)|=k-r+l-k=l-r,
\]
which is the number of non normal hook intersections on $^J\!h_{i_s j_s}^l$. Together with $\cY\cap ^J\!h_{i_s j_s}^l$ we obtain all non normal positions on $^J\!h_{i_s j_s}^l$. Summing over $s=1,\ldots,m$ we therefore obtain:
\begin{equation}\label{ineq3}
|\underline{\cS}|+ |\underline{\cM}|+|\cY|=|\Pl(\p,J)|
\end{equation}
\begin{eqnarray*}
|\cO_A^r|&=&q^{|\Pl(\p,J)|}=q^{|\underline{\cS}|+|\cY|}\cdot q^{|\underline{\cM}|} \quad\quad\text{     by (\ref{ineq3})}
\\
&\leqslant& \kappa\cdot q^{|\underline{\cM}|}\leqslant  \kappa\cdot |\cO^r_A\cap \cO_A^l|\quad\quad\text{     by (\ref{ineq1}) and (\ref{ineq2})}\\
&=&|\cO_A^r|  |\quad\quad\text{     by \ref{orbitsizelr}}
\end{eqnarray*}
Therefore we have \[\kappa=q^{|\underline{\cS}|+|\cY|}, \quad \quad q^{|\underline{\cM}|}=|\cO^r_A\cap \cO_A^l|.\]
\end{itemize}

\end{proof}

This theorem looks complicated, but it is not. The key point is, that we can work locally and  proceed step by step by inspecting the main hooks. Given a main hook, we first count the main hook intersections on its hook leg to determine the number of free positions on it. Say there are $l$ many of those. From $l$ we subtract the number $r$ of normal main hook intersections, since those are always free and these do not contribute to non-trivial endomorphisms of $\C\cO_A^r$ nor produce distinct orbits whose linear spans are isomorphic to $\C\cO_A^r$ . Now we inspect the matrix $B$ defined in \ref{6.12} of rank $k$ say. We choose $k-r$ linearly independent rows of matrix $B$ which are also linearly independent of the $r$ many rows of $B$ with normal main conditions. All of these are parts of rows of $A$. The intersection of these rows of $A$ with our hook leg produce further $k-r$ many free positions on our hook leg, and each of this free positions contributes a count of $q$ many different orbits in the biorbit $\cO_A^{\bi}$. Finally we choose $l-k$ many non normal hook intersections on our hook leg, such that the corresponding rows of matrix $\tilde{A}(a,b)$ of \ref{6.8} are still linearly independent from the $k$ rows already chosen. This gives further $l-k$ many free positions on the hook leg, and each of those determines $q$ many  elements in $\cO_A^l\cap\cO_A^r$ (for each choice of entry $\alpha \in \F_q$ on it) and gives therefore a count of $q$ for that endomorphism ring. Taking in account the $\cY$-conditions on the chosen main hook leg as well and summing the exponents of $q$ gives precisely $q^{m_A}$, where $m_A$ denotes the number of non normal hook intersections in the hook leg. Summing the $q$-exponents over all main hooks we see, that we have found all endomorphism of $\C\cO_A^r$ as well as the isomorphic orbit modules. 
Moreover, one may use this to construct a representative in each biorbit $\cO_A^{\bi}$ that is a ``normal form" for lidempotents under the bi-action of $U_J$. For instance one could choose representatives with ``minimal'' support, (i.e. having as many zeros as possible). This can be determined by setting  $\underline v=(v^1,v^2,\ldots,v^m)={\bf 0}\in \F_q^{\cF^{\cS(j_1)}}\times \F_q^{\cF^{\cS(j_2)}}\times \cdots \F_q^{\cF^{\cS(j_m)}}$ during the construction of the normal template $[A(\underline v)]$ in the theorem above.

\section*{Acknowledgements}
We would like to thank the referee for interesting comments and valuable suggestions.


\providecommand{\bysame}{\leavevmode ---\ }
\providecommand{\og}{``} \providecommand{\fg}{''}
\providecommand{\smfandname}{and}
\providecommand{\smfedsname}{\'eds.}
\providecommand{\smfedname}{\'ed.}
\providecommand{\smfmastersthesisname}{M\'emoire}
\providecommand{\smfphdthesisname}{Th\`ese}

\end{document}